\numberwithin{equation}{section}
\newtheorem{theorem}{Theorem}[section]
\newtheorem{lemma}[theorem]{Lemma} 
\newtheorem{proposition}[theorem]{Proposition}
\newtheorem{corollary}[theorem]{Corollary}
\theoremstyle{definition} 
\newtheorem{definition}[theorem]{Definition}
\newtheorem{remark}[theorem]{Remark}
\newtheorem{example}[theorem]{Example}
\newtheorem{claim}[theorem]{Claim}
\DeclareMathOperator{\Ext}{Ext}
\DeclareMathOperator{\Hom}{Hom}
\DeclareMathOperator{\Trop}{Trop}
\newcommand{\Hor}{\mathrm{Hor}}
\newcommand{\Coh}{\mathrm{Coh}}
\newcommand{\bR}{\mathbb R}
\newcommand{\bC}{\mathbb C}
\newcommand{\bZ}{\mathbb Z}
\newcommand{\bD}{\mathbb D}
\newcommand{\bP}{\mathbb P}
\newcommand{\cA}{\mathcal A}
\newcommand{\cD}{\mathcal D}
\newcommand{\cL}{\mathcal L}
\newcommand{\cM}{\mathcal M}
\newcommand{\cO}{\mathcal O}
\newcommand{\mc}[1]{\mathcal{#1}}
\newcommand{\Spec}{\text{Spec}}
\newcommand{\dd}{\partial}
\newcommand{\pr}{\text{pr}}
\title{Semi-orthogonality in Fukaya-Seidel mirrors to blowups of abelian varieties}
\author[C.~Cannizzo]{Catherine Cannizzo}
\address{C.~Cannizzo, Department of Mathematics, Columbia University, New York, NY, 10027}
\thanks{C.~Cannizzo was partially supported by NSF DMS-2316538. She also thanks the Collaborative Research Centre TRR 326 \emph{Geometry and Arithmetic of Uniformized Structures (GAUS)} at Goethe Universit\"at-Frankfurt for hosting her during the completion of this work.}
\author[S.~Venkatesh]{Sara Venkatesh}
 \email{}
 \address{S.~Venkatesh, Palo Alto, CA}
 \date{}
\subjclass[2020]{53D37 (primary), 18G70 (secondary)}
\keywords{Homological mirror symmetry, Fukaya-Seidel category, symplectic Landau-Ginzburg model, derived category of coherent sheaves, semi-orthogonal decomposition, abelian varieties, generalized SYZ mirror symmetry, blow-up.}
\begin{document}
\begin{abstract}
    We prove evidence of Kontsevich's homological mirror symmetry conjecture (HMS) for a blow-up of an abelian surface times the complex plane, on the complex side, and its symplectic Landau-Ginzburg mirror. Specifically, the first author proved evidence of HMS for a 1-parameter family of genus 2 curves on the complex side, as a hypersurface in an abelian surface. The generalized SYZ mirror to the hypersurface is then the SYZ mirror to the Landau-Ginzburg model given by the blow-up of the abelian surface times the complex plane, along the hypersurface times zero, with superpotential given by projection to the complex plane. The mirror to the blow-up - without the superpotential - is obtained by removing a generic smooth fiber from the generalized SYZ mirror superpotential. We prove a categorical HMS result for the latter pair, between categories expected to split-generate. To do so, we equip the punctured superpotential with a Fukaya category which involves both partial and full wrapping in the base of the symplectic Landau-Ginzburg model due to the removal of a generic fiber. Semi-orthogonality appears in the categorical invariants on both sides of HMS. 
\end{abstract}

\maketitle

\tableofcontents

\section{Introduction}

\subsection{Background and motivation}

Homological mirror symmetry (HMS) was conjectured by Kontsevich \cite{hms} as an equivalence of the Fukaya category \cite{fooo1, fuk_intro} of a symplectic manifold $Y$ and the bounded derived category of coherent sheaves of a mirror complex manifold $X$, 
\begin{equation}
    D^b \text{Coh}(X) \cong D^\pi \text{Fuk}(Y).
    \label{eq:hms}
\end{equation}
The symplectic side is referred to as the ``A-side" and the complex side as the ``B-side," so here $X$ is on the B-side and $Y$ is on the A-side. One geometric way to construct the mirror is Strominger-Yau-Zaslow \cite{syz} mirror symmetry. If one can find a special Lagrangian torus fibration on the symplectic manifold, the SYZ mirror is the T-dual torus fibration, which has the same base and dual fibers. Thus, SYZ mirror symmetry provides a candidate pair to prove HMS for. In this paper, we prove evidence in a new example that HMS holds for an SYZ mirror pair.

Calabi-Yau and Fano varieties, and their mirrors, were the first examples of HMS \cite{fuk_abel, ab_sm,zp,sherid_CY,sherid_Fano,seidel,ueda_del_pezzo, Ab06, Ab09, FLTZ12}. Less is known about general type varieties and their mirrors. In particular, hypersurfaces of abelian and toric varieties have been studied more recently. To find a mirror to a hypersurface $H$, the \emph{generalized SYZ} construction can be used \cite{HV00, AAK}. This is because a hypersurface doesn't have an evident Lagrangian torus fibration, the way a toric or abelian variety $V$ does. So we instead consider the intermediary manifold $X$ given by a deformation to the normal cone, because that does admit such a fibration. The superpotential, also known as a \emph{Landau-Ginzburg} (LG) model, given by projection to the deformation parameter, has $H$ as the critical locus so the SYZ mirror superpotential is generalized SYZ mirror to $H$ \cite{AAK, Ca20}. In this paper, we prove a categorical HMS result for $X$ without the superpotential, and its mirror. 

{\bf Related work.} Evidence for categorical HMS of a 1-parameter family of complex genus 2 curves as a hypersurface in the Jacobian torus was proved in \cite{Ca20} for a subcategory of the Fukaya category. SYZ mirror symmetry was proved in \cite{KL}. HMS for complex hypersurfaces of $(\bC^*)^n$ was proved in \cite{AA24} and SYZ mirror symmetry for hypersurfaces of toric varieties in \cite{AAK,CLL}. Other related work includes \cite{Aur18}, relating the Fukaya category of Landau-Ginzburg models $((\bC^*)^n, f)$ to the wrapped Fukaya category of a smooth fiber and of its complement in $(\bC^*)^n$; note that the hypersurface is on the A-side while here it is on the B-side. HMS for toric blow-ups has been studied in \cite[\textsection 6.3]{HH21}, blowing up at toric fixed points, in contrast to hypersurfaces away from toric divisors as here. The works \cite{VWX23, Kerr08} consider semi-orthogonal decompositions in the toric setting, as we do in the abelian variety setting here. Furthermore, \cite{GKR17} discuss other versions of mirror symmetry for general type varieties.

This overview of the generalized SYZ construction is based on \cite[\textsection 3]{AAK}. Let $V$ be a toric variety or a principally polarized abelian variety of complex dimension $n$. Let $\cL \to V$ be an ample holomorphic line bundle with section $s$, and $H=s^{-1}(0)$ its divisor, a complex hypersurface of $V$. Let $X:=\text{Bl}_{H \times \{0\}} V \times \bC$. 

\begin{remark}\label{rem:deform_cone}
    $X$ is known as the deformation to the normal cone because the blow-up map composed with the projection to the $\bC$ factor $y:X \to V \times \bC \to \bC$ parametrizes a deformation of a smooth generic fiber to a singular fiber containing the exceptional divisor $E$ over $0 \in \bC$. Recall that the exceptional divisor is the projectivization of the normal bundle to the submanifold blown-up, here $\mathbb{P}(\mathcal{N}_{H\times\{0\}\backslash V \times \bC})$ where $\mathcal{N}_{H\times\{0\}\backslash V \times \bC} \cong (\mathcal{L} \oplus \mathcal{O}_\bC)|_{H \times \{0\}} \to H \times \{0\}$.
\end{remark}

{\bf The geometry of the blow-up $X$.}

\begin{equation}
    \begin{aligned}
    & X=\text{Bl}_{H \times \{0\}} V \times \mathbb{C} = \{(x_1,\ldots, x_n, y, (u_1:u_2)) \in \mathbb{P}(\mathcal{L} \oplus \mathcal{O}_{\bC}) \mid s(x_1,\ldots, x_n)u_2 = u_1y\}\\
    & \pi: X \xrightarrow{\text{blow-up }} V \times \bC, \quad y: X \xrightarrow{\text{pr}_\bC \circ \pi} \bC.
    \end{aligned}
\end{equation}
Fibers $y^{-1}(t)\cong V$ for $0 \neq t \in \bC$ because ${x}:=(x_1,\ldots,x_n) \in V$ is uniquely determined by $s(x) = t \cdot \frac{u_1}{u_2}$. As outlined in Remark \ref{rem:deform_cone}, over 0 this fiber deforms to $y^{-1}(0)\cong V \cup_H E$ intersecting along $H$  \cite[Proposition 7.3]{AAK}, as follows. First, the exceptional divisor $E$ is a $\bP^1$-bundle; letting $p:=\pi|_E$ we have 
$$
\text{pr}_V \circ p: E \to H.
$$
Then
 \begin{equation}
E =\pi^{-1}(H \times \{0\}) = \{({x},0, (u_1:u_2)) \mid s(x) u_2=u_1\cdot 0\} \ni ({x},0, (u_1:u_2)) \mapsto {x} \in H
 \end{equation}
because $s(x) u_2=u_1\cdot 0$ for all $(u_1:u_2) \in \bP^1\cong \bP(\cL_{{x}}\oplus \cO_{\{0\}})$ implies $s(x)=0$ therefore $x \in H$. Second, the fiber $y^{-1}(0)$ over 0 also contains the proper transform $\tilde V$ of $V \times \{0\}$, which is 
$$
\tilde V= \overline{\pi^{-1}((V\backslash H) \times \{0\})} = \{({x},0,(1:0)) \mid {x} \in V \} \cong V.
$$
Intersecting these two sets
$$
\tilde V \cap E = \{({x},0,(1:0) \mid {x} \in H \} \cong H.
$$

To obtain a Lagrangian torus fibration on $X$, start with the moment map on $V \times \mathbb{C}$  obtained from the torus action $(e^{i\theta_1}x_1,\ldots,e^{i\theta_n}x_n, e^{i\theta_{n+1}}y)$. Then use cut and paste operations \cite{4from2} on the moment polytope in the base to blow up $V\times \mathbb{C}$ along $H \times \{0\}$. The resulting polytope is the base of a Lagrangian torus fibration on $X$, with coordinates
\begin{equation}\label{eq:mom_map_coords}
    (\xi_1,\ldots,\xi_n,\eta) = \frac{1}{2\pi}(\log|x_1|,\ldots,\log|x_n|,\mu_X(x_1,\ldots,x_n,y))
\end{equation}
for moment map $\mu_X$ from the $S^1$-action on the $y$ coordinate of the blow-up $X$ {\cite[\textsection 4]{AAK}}.

{\bf Generalized SYZ mirror symmetry.} Here is an outline of the generalized SYZ procedure \cite{AAK}, which we will later describe in more detail for the setting of this paper. Let
$$
\Omega_{V \times \bC} = i^{n+1} \left(\bigwedge_{j=1}^n d \log x_j \right) \wedge d \log y
$$
be a meromorphic $(n+1)$-form on $V \times \bC$ with simple poles on the toric divisors $D_{V \times \bC} = (V \times 0) \cup (D_V \times \bC)$ where the term $D_V$ is not there if $V$ is an abelian variety. Then we have a canonical form on $X$ given by
$$
\Omega = \pi^* \Omega_{V \times \bC}
$$
with anticanonical divisor
$$
D=\tilde{V} \cup \pi^{-1}(D_V \times \bC).
$$
Let $X^0 = X\backslash D$ be the open Calabi-Yau manifold obtained by removing that anticanonical divisor. Let $Y^0$ be the SYZ mirror of $X^0$ obtained by T-dualizing the Lagrangian torus fibration of \cite[Definition 4.4]{AAK}. The prescription of SYZ produces complex analytic charts, as described by \cite[Equation (2.3)]{AAK}, as well as transition functions described by wall-crossing data \cite[Proposition 5.8]{AAK}, which together define the complex manifold $Y^0$.  Compactifying $X^0$ to $X$, the mirror to $X$ is an LG model $(Y^0, W_0)$ with superpotential $W_0:Y^0 \to \bC$. Compactifying $Y^0$ to $Y$, the mirror to $(Y,-v_0)$ is LG model $(X,y)$ with superpotential $y:X \to \mathbb{C}$. The relation between $W_0$ and $v_0$ for $V$ an abelian variety is given in Equation \eqref{eq:superpot_relns}. In particular, by \cite[Definition 1.2, Theorems 1.5, 1.6, 8.4, 10.4]{AAK}, LG model $(Y,-v_0)$ is a \emph{generalized SYZ mirror} of the hypersurface $H$. 

The manifold $Y$ can be defined by a polytope if $V$ is toric, or a polyhedron if $V$ is abelian. It is defined in terms of the tropicalization of the defining section $s$ of the hypersurface $H$:
$$
\Delta = \{(\xi_1,\ldots,\xi_n,\eta) \in \bR^{n+1} \mid \eta \geq \text{Trop}(s)(\xi_1,\ldots,\xi_n)\}.
$$
For $V$ an abelian variety there are additional steps to quotient by its defining lattice. Both $X$ and $Y$  admit K\"ahler structures and the holomorphic functions $y:X \to \bC$, $W_0:Y^0 \to \bC$, and $-v_0:Y \to \mathbb{C}$ are  Landau-Ginzburg models. In this paper, we take $X$ without superpotential to be the complex side (B-side) of mirror symmetry and $(Y^0,W_0)$ to be a symplectic (A-side) Landau-Ginzburg model, see \cite[Definition 1.1]{ACLL_LG}.

\begin{example} Take $H=\{1\} \subset V=\mathbb{C}^*/\bZ$ to be a point in an elliptic curve. Then $(Y,-v_0)$ is the total space of the Tate curve over a disc, of real dimension 4. The structure sheaf on the point $H$ is mirror to the Lagrangian thimble to the critical point of the Tate curve.
\end{example}

\subsection{Statement of main theorem}\label{section: main_result}

Now we assume $V$ is an abelian surface of complex dimension 2 and $H$ is a genus 2 curve of complex dimension 1. Also, the symbol $n$ will now be used to denote an element in $\mathbb{Z}^2$, so we let the dimension of $V$ be denoted by $g$. The $g=2$ HMS result of \cite{Ca20} was between $H$ and $(Y,-v_0)$. Here it is between $X$ and $(Y^0,W_0)$.

Let $V = (\bC^*)^2/\tau \bZ^2$ be the principally polarized abelian surface, topologically a 4-torus $T^4$, corresponding to the complex modular parameter $\tau = \frac{i}{2\pi} \log t \left( \begin{matrix} 2 & 1\\ 1 & 2 \end{matrix} \right)$, $0<t \in \bR_{\ll 1}$, defined by quotienting by 
\begin{equation}
 (\tau n)\cdot (x_1,x_2) = (e^{2\pi i \sum_{j=1}^2\tau_{1j} n_j} x_1, e^{2\pi i \sum_{j=1}^2 \tau_{2j} n_j} x_2), \quad (x_1,x_2) \in (\bC^*)^2, \quad (n_1,n_2) \in \mathbb{Z}^2.
\end{equation}
Then $V$ admits a degree 1 ample line bundle $\cL = (\bC^*)^2 \times \bC/\tau \bZ^2$ defined by quotienting by
$$
(\tau n) \cdot (x_1,x_2 , v ) = (e^{2\pi i \sum_{k=1}^2 \tau_{1k}n_k}x_1, e^{2\pi i \sum_{k=1}^2 \tau_{2k} n_k }x_2,  e^{-\pi i(n^T \tau n)} 
\prod_{j=1}^2 x_j ^{-n_j}v)
$$
with section given by the theta function
$$
\vartheta(\tau,x) := \sum_{n\in  \bZ^2} e^{\pi i n^T \tau n} x_1^{n_1} x_2^{n_2}, \quad x=(x_1,x_2) \in (\mathbb{C}^*)^2.
$$ 
The 0-set of the theta function in $V$ is a genus 2 curve 
$$
H  :=\{ x \in V \mid \vartheta(\tau,x)=0\}=\Sigma_2 \subset V.
$$
Define $X:=\text{Bl}_{H\times \{0\}} V \times \bC$ with blow-up map $\pi: X \to V \times \bC$, exceptional divisor $p:E \to H \times \{0\}$, and inclusion $i: H  \times \{0\} \to V \times \bC$. To obtain the generalized SYZ mirror, we have an additional step of quotienting by $\tau \bZ^2$ after restricting $Y$ to a neighborhood where the group acts freely \cite[Eqs. (3.10), (3.24), (3.25)]{Ca20}. For small $\varepsilon <1$, 
\begin{equation}
\label{eq:polytope_def_Y} 
\begin{aligned}
\Delta_{\tilde{Y}} &:= \{(\xi_1,\xi_2,\eta) \in \bR^3 \mid \eta \geq \Trop(\vartheta)(\xi_1,\xi_2)\}\\
\Delta_Y &:= (\Delta_{\tilde{Y}})|_{|\chi^{(0,0,1)}| \leq \varepsilon}/\tau \bZ^2 \cong T^2 \times [0,\varepsilon']
\end{aligned}
\end{equation}
where $\chi^{(0,0,1)}$ is the complex affine toric coordinate corresponding to the vector in the $\eta$ direction, $(0,0,1)$. We will denote it by $v_0$ following \cite{Ca20,AAK}. The polytope $(\Delta_{\tilde{Y}})|_{|\chi^{(0,0,1)}| \leq \varepsilon}$ looks like a bowl tiled by infinitely many hexagons and thickened by an amount $\varepsilon'$ that depends on $\varepsilon$. After the quotient, the hexagon is identified to a $T^2$ and $\Delta_Y \cong T^2 \times [0,\varepsilon']$. 

Recall from toric geometry that each vertex $v$ corresponds to a chart $\Spec 
(\bC[y_1^v,y_2^v, y_3^v])\cong \bC^3$ in local inhomogeneous coordinates $y_1^v,y_2^v, y_3^v$, and edges define transition functions. The reason this polytope defines the SYZ mirror is because its vertices and edges match the SYZ charts and transition functions. In particular, 
$$
v_0=y_1^v y_2^v y_3^v.
$$ 
Then $(Y,-v_0)$ is the SYZ mirror to $(X,y)$ and the generalized SYZ mirror to $H$. The SYZ mirror to $X$ is $(Y^0,W_0)$ defined by 
\begin{equation}\label{eq:superpot_relns}
   Y^0 = Y \backslash v_0^{-1}(1), \quad W_0=T^\epsilon(v_0-1)
\end{equation}  
where $T$ is the Novikov parameter and $\epsilon >0$ is small (different from the $\varepsilon$ in the domain of definition in Equation \eqref{eq:polytope_def_Y}). The Novikov parameter parametrizes complex structures on $Y$ mirroring symplectic structures on $X$, but here we consider complex structures on $X$ mirroring symplectic structures on $Y$, which is parametrized by $\tau$. In general, an SYZ complex mirror is defined over a Novikov parameter $T$ so that structure maps converge in the Fukaya category of the original symplectic manifold. Here, structure maps converge over $\mathbb{C}$ and we do not consider the A-side of the genus 2 curve (which was considered in \cite{genus2}), so we may assume 
\begin{equation}
    T=e^{-2\pi}.
 \end{equation}

Here $W_0: Y^0 \to \bC^*$ is a $T^4$-fibration over $\bC^*$ with one singular fiber over $-T^\epsilon$. We use the symplectic form on $Y$ constructed in \cite[\textsection 3.5]{Ca20}, restricted to $Y^0$. The total space of $Y$ is smooth, and $\omega$ is modeled on the standard form for $\mathbb{C}^3$ near critical points of $v_0$ (the north and south pole of the banana manifold of three $\mathbb{P}^1$'s identified at their poles), while it is modeled on the standard product form for $\mathbb{CP}^2(3) \times \mathbb{D}$ away from the critical points.

\begin{remark}\label{rem:toric_coords}
    What was called $x,y,z$ in \cite[Figure 10, \textsection 3.3]{Ca20} is $y_1^{(-1,-1,0)}, y_2^{(-1,-1,0)}, y_3^{(-1,-1,0)}$ here. And $\tau$ in \cite{Ca20} has been renamed $t$ here, in order to reserve $\tau$ for the more standard notation of the complex modular parameter, here $\frac{i}{2\pi} \log t \left( \begin{matrix} 2 & 1\\ 1 & 2 \end{matrix} \right)$.
\end{remark}

To recap, here are the three forms of SYZ mirror symmetry in \cite{AAK} in the case of $V$ an abelian surface. 

\begin{theorem}[{\cite[Theorem 10.4]{AAK}}]\mbox{}

\begin{enumerate}
    \item $Y^0$ is SYZ mirror to $X^0$. 
    \item $(Y^0, W_0)$ is SYZ mirror to $X$. 
    \item $(Y,-v_0)$ is generalized SYZ mirror to $H$.
\end{enumerate}

\end{theorem}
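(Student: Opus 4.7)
The plan is to adapt the generalized SYZ construction of \cite{AAK} to the abelian-variety setting, following the three-step strategy there: first build the Lagrangian torus fibration on the B-side, then apply fiberwise T-duality with quantum corrections to obtain $Y^0$, and finally treat the three compactifications by recording the divisors we discard as a superpotential on the mirror. In the first step, the moment map on the total space of $\cL \oplus \cO \to V \times \bC$ gives a Hamiltonian $T^{n+1}$-action; performing the cut-and-paste operation of \cite{4from2} at the circle bundle corresponding to the exceptional divisor produces a polarized Lagrangian $T^3$-fibration $\mu : X \to B$ whose base is a polytope (for the toric version) or the quotient $(\bR^3/\tau\bZ^2)$-refinement in the abelian case. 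Restricting to $X^0 = X \setminus D$ kills the locus where the fibers are singular or have nontrivial stabilizer, leaving a genuine $T^3$-bundle outside a codimension-two discriminant.

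Next I would execute the T-duality step. Points of $Y^0$ are pairs (Lagrangian fiber, unitary local system), and the complex charts are given by AAK's formula \cite[Eq.~(2.3)]{AAK} in terms of symplectic areas and holonomies of disk classes emanating from the toric divisors of $\bP(\cL \oplus \cO)$. The transition functions between adjacent chambers of the base are dictated by wall-crossing from Maslov-index-zero disks, and in \cite[Prop.~5.8]{AAK} these are shown to be precisely the toric transition functions attached to the polytope $\Delta_{\tilde Y}$ of \eqref{eq:polytope_def_Y}, with the tropicalization $\mathrm{Trop}(\vartheta)$ entering through the wall structure generated by the hypersurface. This establishes (1). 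For (2), compactifying $X^0 \hookrightarrow X$ amounts to adding back the proper transform $\tilde V$ together with the toric divisors; on the mirror this forces us to keep track of Maslov-index-two holomorphic disks crossing those divisors, which assemble into the leading-order superpotential $W_0 = T^\epsilon(v_0 - 1)$ via \eqref{eq:superpot_relns}. For (3), the same argument relative to the hypersurface $H$ (viewed as the zero locus of $\vartheta$ on $V$) identifies $-v_0$ as the Hori--Vafa superpotential, so that $(Y, -v_0)$ is generalized SYZ mirror to $H$ in the sense of \cite[Def.~1.2]{AAK}.

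The main obstacle lies in implementing the quotient by $\tau \bZ^2$ honestly rather than formally. In the toric case of \cite{AAK} the base polytope is genuinely convex and wall-crossing is a finite-chamber computation, but for $V$ abelian one must (i) restrict to the sub-polytope $\chi^{(0,0,1)} \le \varepsilon$ so that the $\tau \bZ^2$-action on $\Delta_{\tilde Y}$ is free, (ii) verify that the wall structure and the disk classes counted in the charts are $\tau\bZ^2$-equivariant, and (iii) check that the superpotential $v_0$ descends to the quotient. Steps (i) and (iii) are handled in \cite[\textsection 3.3--3.5]{Ca20}, while (ii) requires the theta-function identities underlying $\vartheta(\tau,x)$ to translate the per-chamber wall-crossing automorphisms under lattice shifts; once these compatibilities are in place, the three statements follow from the corresponding three assertions in \cite[Thm.~10.4]{AAK} applied chamber-by-chamber on a fundamental domain and then descended. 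The subtlety that I expect to be the most delicate is keeping the Novikov-parameter bookkeeping consistent between the $\varepsilon$ of the polytope truncation and the $\epsilon$ in $W_0 = T^\epsilon(v_0-1)$, so that the smooth fiber $v_0^{-1}(1)$ removed to form $Y^0$ is genuinely the one dictated by SYZ duality with $X$ rather than with $H$.
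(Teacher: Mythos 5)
This theorem is not proved in the paper at all: it is quoted verbatim from \cite[Theorem 10.4]{AAK}, with the abelian-surface instance resting on \cite{KL} and the $\tau\bZ^2$-quotient bookkeeping of \cite[\textsection 3]{Ca20}, which is exactly where your sketch also defers all the substantive verifications (wall-crossing, disk counts, equivariance). Your outline is a faithful summary of that construction and is consistent with the paper's citations — the only small imprecision is describing the fibration on the abelian $V\times\bC$ as coming from a Hamiltonian $T^{n+1}$-action, which only literally holds on the cover $(\bC^*)^n\times\bC$ — so it matches the paper's (purely referential) treatment.
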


HMS for (3) is examined in \cite{Ca20}. HMS for (2) is the subject of this paper.

\begin{remark} The symplectic mirror to the hypersurface of an abelian variety cannot be exact. Note that $W_0$, like $v_0$, is a symplectic fibration with fibers given by closed, compact tori with volume. Therefore, the symplectic form $\omega$ cannot be exact by Stokes' theorem. For if $[\omega]=0$, then so does $[\omega|_{W_0^{-1}(w)}]=0$ and thus $\int_{W_0^{-1}(w)} \omega^2=0$ where $W_0^{-1}(w) \cong T^{4}$ for $w \in \mathbb{C}\backslash\{0,-T^\epsilon\}$, contradiction.
\end{remark}

To state our main theorem involving the Fukaya category of $(Y^0,W_0)$, we first describe the B-side category.

{\bf Semi-orthogonal decomposition for blowups.} We adapt the diagram at the start of \cite[\textsection 4]{Or92} where their $\tilde{X} \to X$ is our blow up $\pi: \text{Bl}_{H  \times \{0\}} V \times \bC \to V \times \bC$, their $p: \tilde{Y} \to Y$ is the exceptional divisor which is our $\bP^1$-bundle $p: E \to H  \times \{0\}$, and $j:E \to X$ is inclusion. That is, the following commutes:
\begin{equation}\label{eq:blowup_diag}
\begin{tikzcd}
E \arrow[r, "j"] \arrow[d, "p"] & X=\text{Bl}_{H \times \{0\}} V \times \mathbb{C} \arrow[d, "\pi"] \\
H \times \{0\} \arrow[r, "i"] & V \times \mathbb{C}
\end{tikzcd}  
\end{equation}
Then \cite[Assertion 4.2(b), Theorem 4.3]{Or92} assert that there is a semi-orthogonal decomposition
\begin{equation}\label{eq:sod_Bside_no_wrap}
    D^b\Coh(X) = \langle j_*(p^* D^b\Coh(H  \times \{0\})\otimes \cO_p(-1)) , \pi^* D^b\Coh(V \times \bC) \rangle
\end{equation}
where $\cO_p(-1) = \cO_E(E)$. In the first factor, this means that a sheaf on $H $ is pulled back to the exceptional divisor and then extended by 0 to all of $X$. 

This result holds, roughly speaking, because of Beilinson's spectral sequence in \cite[Proposition 8.28]{Huy06} and the semi-orthogonality of $\cO_{\bP^n}$ with $\cO(1)_{\bP^n}$. Specifically, $H^0(\bP^n,\cO(1)) \cong \bC[x_0,\ldots,x_n]_1$, degree 1 homogeneous polynomials in $(n+1)$-variables, but $H^0(\bP^n, \cO(-1)) = 0$.\newline

{\bf Main Theorem.} First we set some notation on the B-side.

\begin{definition}[notation $D^b_\cL$, {\cite{Ca20}}]
Let $\pr_V: V \times \bC \to V$ and $\pr_\bC: V \times \bC \to \bC$ be the projections to each factor. We define the following subcategories of $D^b\text{Coh}$.

\begin{enumerate}
    \item $D^b_\cL\Coh(V)$: full subcategory of objects $\{\cL^{\otimes j}[k]\}_{j,k \in \bZ}$ 
     \item $D^b_\cL\Coh(V\times \bC)$: full subcategory of objects $\{(\pr_V^*\cL^{\otimes j}\otimes \pr_\bC^*\cO_\bC)[k] \}_{j,k \in \bZ}$ 
    \item $D^b_\cL\Coh(H)$: full subcategory of objects $\{\cL^{\otimes j}[k]|_H\}_{j,k \in \bZ}$
    \item $D^b_\cL\Coh(X)$: full subcategory of objects in $j_*(p^*D^b_{\cL} \Coh(H \times \{0\}) \otimes \cO_p(-1))$ and $ \pi^* D^b_{\cL} \Coh(V \times \bC)$
\end{enumerate}
\end{definition}

Now we describe the A-side category. On $(Y^0,W_0)$ we take the Fukaya-Seidel category $FS(Y^0,W_0)$, after Fukaya \cite{fooo1} for compact manifolds, adapted to exact Lefschetz fibration symplectic LG models in \cite{seidel}, and generalized to non-exact, non-Lefschetz LG models in \cite{ACLL_LG}. Then our main theorem is:

\begin{theorem} \label{thm}
There exist two full subcategories $\cA_L$ (Lagrangians $L$ fibered over U-shapes) and $\cA_K$ (Lagrangians $K$ fibered over cotangent fibers) of the Fukaya category $H^* FS(Y^0, W_0)$ which are equivalent to $D^b_\cL\Coh(H)$ and $D^b_\cL\Coh(V\times \bC)$, respectively. Furthermore, for the full subcategory $\cA_{L,K}:= \cA_L \cup \cA_K$, there is an equivalence  
\begin{equation}\label{eq:main_result}
    \cA_{L,K} \xrightarrow[]{\cong} D^b_{\cL} \Coh(X).
\end{equation}
In particular, the semi-orthogonality in $D^b_{\cL} Coh(X)$ is respected.
\end{theorem}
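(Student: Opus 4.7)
The plan is to match the stated Orlov-type semi-orthogonal decomposition of $D^b_{\cL}\Coh(X)$ directly with a corresponding decomposition of $\cF$, by showing first that each piece $\cF_i$ is separately equivalent to its predicted mirror, and then that the Floer-theoretic mapping cone structure reproduces the structure of the blowup. So I would organize the proof around the diagram \eqref{eq:blowup_diag}: $\cF_1$ is modeled on the exceptional divisor $E$, hence on $H$; $\cF_2$ is modeled on the base $V \times \bC$; and the semi-orthogonality translates into a directed Hom-vanishing of Floer complexes coming from the geometry of the Landau-Ginzburg fibration $W_0 \colon Y^0 \to \bC^*$.

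First I would set up $\cF_2$, the category of Lagrangians fibered over cotangent fibers in the $\bC$-direction of $W_0$. Away from the critical value $-T^\epsilon$, the fibration $W_0$ is topologically a $T^4$-bundle, and the product structure of $V \times \bC$ on the B-side should be visible after choosing a cotangent fiber in the base: Lagrangians that split as (horizontal section in the $V$-direction of $Y$) $\times$ (cotangent fiber in the $\bC$-base) compute Floer cohomology as a tensor product of the Floer cohomology of sections in the abelian-variety mirror (which gives $\cL^{\otimes j}$ by \cite{Ca20}) with the wrapped Floer cohomology of cotangent fibers in $\bC^*$ (which gives $\cO_{\bC}$ under the standard Abouzaid-style equivalence, after incorporating the removal of the fiber $v_0^{-1}(1)$ so that $Y^0 \to \bC^*$ has the correct base). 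This yields $\cF_2 \cong D^b_{\cL}\Coh(V \times \bC)$, essentially by inheriting the HMS result of \cite{Ca20} in the fiber and using a K\"unneth-type product argument in the base.

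Next I would construct $\cF_1$, the U-shaped Lagrangians. A U-shape in the base $\bC^*$ goes from infinity, wraps once around $-T^\epsilon$, and returns to infinity; its lift to $Y^0$ is a Lagrangian that interpolates between two parallel cotangent-fiber Lagrangians and picks up the monodromy around the critical fiber. The critical fiber is the banana manifold $\mathbb{P}^1 \cup_H \mathbb{P}^1 \cup_H \mathbb{P}^1$-style union whose mirror contains $H$, and the same generalized SYZ matching from \cite{Ca20} identifies U-shape Lagrangians (equivalently, Lagrangian thimbles twisted into a compact form) with the restrictions $\cL^{\otimes j}[k]|_H$ via the pushforward $j_* p^*$ followed by the twist $\otimes \cO_p(-1)$. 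The twist by $\cO_p(-1)$ arises naturally on the A-side from the precise way in which the U-shape wraps, or equivalently from the half-monodromy that relates it to a thimble; this is the content that needs the most care and is the main obstacle.

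Finally, for the semi-orthogonality I would compute the two mixed Floer complexes. For an object $L_1 \in \cF_1$ and $L_2 \in \cF_2$, the projection of $L_1$ in $\bC^*$ lies inside a compact region around $-T^\epsilon$, while $L_2$ projects to a cotangent fiber escaping to infinity; pushing $L_2$ off to infinity makes $CF^*(L_2, L_1) = 0$, which is exactly the vanishing needed for the second factor in \eqref{eq:sod_Bside_no_wrap} to be semi-orthogonal to the first on the right. In the opposite direction $CF^*(L_1, L_2)$ is nonzero and I would identify its generators with intersection points coming from the endpoints of the U where it becomes tangent to the cotangent fiber; matching these generators with
\[
\Hom_{D^b\Coh(X)}\bigl(j_*(p^*\cF\otimes\cO_p(-1)),\,\pi^*\cG\bigr) \;\cong\; \Hom_{D^b\Coh(H)}(\cF,\, i^*\cG \otimes \cdots)
\]
by adjunction and the projection formula produces the desired isomorphism on morphisms. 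Splicing the two equivalences $\cF_i \cong D^b_{\cL}\Coh(-)$ along this matched Hom module and invoking Orlov's theorem gives the equivalence $\cF \xrightarrow{\cong} D^b_{\cL}\Coh(X)$, with the semi-orthogonal decomposition preserved by construction. The primary technical difficulty is tracking the twist by $\cO_p(-1)$ through the A-side construction, as this is where the combinatorics of the U-shape, the monodromy around the critical fiber of $W_0$, and the precise Orlov formula must all align.
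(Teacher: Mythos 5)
Your overall architecture is the same as the paper's: prove $\cF_1 \cong D^b_\cL\Coh(H)$ and $\cF_2 \cong D^b_\cL\Coh(V\times\bC)$ separately (the former by transporting the HMS result of \cite{Ca20} for U-shapes in $(Y,-v_0)$, the latter by a K\"unneth argument with $\Ext(\cO_\bC,\cO_\bC)=\bC[x]$), then match the two mixed Hom spaces against Orlov's decomposition \eqref{eq:sod_Bside_no_wrap}. However, there are two genuine gaps. First, you leave the $\cO_p(-1)$ twist unresolved and call it the main obstacle, but it has a clean resolution that your proof needs: within the first factor, $-\otimes\cO_E(E)$ is an autoequivalence and $j$ is a closed immersion, so the twist cancels and $\Ext$ reduces to $\Ext_H(\cL^{k_1}|_H,\cL^{k_2}|_H)$ with no A-side trace; in the mixed direction, adjunction via $j^! = j^*\otimes\cO_E(E)[-1]$ cancels the twist against $K_{E/X}$ and leaves only a shift $[-1]$, which on the A-side is absorbed into the grading convention on the $K_j$ (their base grading $\alpha=0$ differs by $\pi$ from the U-shape ends, accounting for $HF(L_k,K_j)\cong HF(L_k,L_j)[-1]$). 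Without pinning down this shift your identification of the mixed Hom module, and hence the splicing step, is not complete. The "$\cdots$" in your displayed adjunction formula is exactly the content that must be computed.

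Second, matching objects and morphism spaces does not give an equivalence of categories: you must verify that the identification respects composition, in particular the two nonzero mixed products $HF(L_{j_2},K_{j_3}[-1])\otimes HF(L_{j_1},L_{j_2})\to HF(L_{j_1},K_{j_3}[-1])$ and its analogue with two cotangent-fiber Lagrangians. The paper does this via a prism/3-chain argument (Lemmas \ref{lem:prism} and \ref{lem:compn_final}) reducing holomorphic triangles in $Y^0$ to flat triangles in a $T^4$-fiber, together with a degree bookkeeping that shows the product descends to cohomology; "invoking Orlov's theorem" does not supply this on the A-side. A smaller inaccuracy: the U-shaped Lagrangians do not project to a compact region around $-T^\epsilon$; their ends go to the boundary $|w|=T^\varepsilon$ at fixed angles below the $K_j$ rays, and the vanishing $HF(K_j,L_i)=0$ comes from positively wrapping the input $K_j$, which already lies above all U-shape ends, so no intersections are ever created. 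The mechanism you describe is morally right but the stated geometry would not support the area and grading computations needed elsewhere.
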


\begin{remark}
    Note that $D^b\text{Coh}_\mathcal{L}(V)$ is referred to as $\mathcal{B}_\mathcal{L}$ in \cite[\textsection 1.2.2]{ACLL_tori}.
\end{remark}

\begin{remark}
    We can generalize the definition of $X$ and $(Y^0,W_0)$ by taking $H$ to be a theta divisor in a principally polarized abelian variety $V_\tau$ of any dimension, for a generic parameter $\tau$ in the Siegel upper half space. We expect the results of this paper can then be generalized to HMS between $X$ and $(Y^0,W_0)$, assuming HMS for theta divisors. This will involve generalizing the symplectic form in \cite{Ca20}, including a B-field, equipping Lagrangians with connection 1-forms with curvature determined by the B-field, replacing $\frac{i}{2\pi} \log t \left( \begin{matrix} 2 & 1\\ 1 & 2 \end{matrix} \right)$ with a general $\tau$, and generalizing calculations from $g=2$ here for an abelian surface to $g$ for an abelian variety of arbitrary dimension.
\end{remark}

\begin{remark}\label{rem:loc_sys_assump}
    In this paper, we assume trivial local systems on the Lagrangians as in \cite{Ca20}. Global HMS for principally polarized abelian varieties of any dimension and with the most general connection 1-forms on Lagrangians is proved on expected generating sets in \cite{ACLL_tori}. Here and in \cite{Ca20}, we consider fiber Lagrangians $\ell_{k,[0]}$ and $\omega_B=0$ in the notation of \cite[\textsection 1.2.3 and Equation (2.36)]{ACLL_tori}.
\end{remark}

\begin{remark}
    The Lagrangians in $\cA_L$ and $\cA_K$ do not bound discs. To define $FS(Y^0,W_0)$ one must account for the Lagrangians that do, with bounding cochains as in \cite[\textsection 3]{fooo1} for compact symplectic manifolds without superpotential. For a suitable definition of $FS(Y^0,W_0)$ with bounding cochains, upgrading to an $A_\infty$-result, and for a suitable adaptation of core HMS \cite{PS23}, we expect this theorem can be upgraded to the originally conjectured equivalence in Equation \eqref{eq:hms} which would be $D^b\text{Coh}(X) \cong D^\pi FS(Y^0,W_0)$.
\end{remark}

\begin{remark} 
\label{rem:glue}
We directly construct Lagrangians and calculate their morphisms in $(Y^0, W_0)$. Another approach could be to combine the left diagram of Figure \ref{fig:diagrams} for $H^0 FS(Y,-v_0)$ from \cite{Ca20}, prove a similar diagram for $H^0 FS(V^\vee \times \bC^*_{y^\vee}, y^\vee)$ as in the right diagram of Figure \ref{fig:diagrams} where $V^\vee$ is the SYZ mirror of $V$, and then combine them as suggested in \cite[Remark 1.5]{Aur18} and \cite[\textsection 3.3: Gluing]{sylvan_orlov}.

\begin{figure}[h]
\begin{multicols}{2}
\begin{tikzcd}
D^b_{\mathcal{L}} \mathrm{Coh}(V) \arrow[r, "\iota^*"] \arrow[d, hook, "\text{HMS on } V"'] & D^b_{\mathcal{L}} \mathrm{Coh}(H) \arrow[d, hook, "\text{HMS on } H=\Sigma_2"'] \\
H^0\mathrm{Fuk}(V^\vee) \arrow[r, "\cup"] & H^0 {FS}(Y,-v_0)
\end{tikzcd}

\vspace{1cm}

\begin{tikzcd}
D^b_{\mathcal{L}} \mathrm{Coh}(V) \arrow[r, "\mathrm{pr}_V^*"] \arrow[d, hook, "\text{HMS on } V"'] & D^b_{\mathcal{L}} \mathrm{Coh}(V \times \mathbb{C}) \arrow[d, hook] \\
H^0\mathrm{Fuk}(V^\vee) \arrow[r,"\cup"] & H^0 {FS}(V^\vee \times \mathbb{C}^*_{y^\vee}, y^\vee)
\end{tikzcd}
\end{multicols}
\caption{HMS squares for two components of Remark \ref{rem:glue}}
\label{fig:diagrams}
\end{figure}

\end{remark}

{\bf Outline of the paper.} In Section \ref{sec:Bside} we describe the B-side and the semi-orthogonality of $D^b \text{Coh}(X)$. In Section \ref{sec:Aside} we describe the A-side. In Subsection \ref{sec:objs} we define the mirror Lagrangian objects.  In Subsection \ref{sec:mors} we define the morphisms in the Fukaya subcategories of $(Y^0, W_0)$ by showing the continuation maps are well-defined. We define how Lagrangians wrap in \textsection \ref{subsec:wrap_conv} and outline the rest of Subsection \ref{sec:mors} with an overview of how to pass from the cochain complex to the localized category in \textsection \ref{sec:alg}. Then in \textsection \ref{sec:qisos} we define the quasi-units we localize at, and in \textsection \ref{sec:contn} we define their inverses, the continuation maps. Finally in \textsection \ref{sec:mors_defn} we define the morphisms by taking a homotopy colimit over multiplication by quasi-units, using a model from \cite{AA24} that amounts to wrapping one way on the input Lagrangian and in the opposite direction on the output, in Equation \eqref{eq:we_can_wrap}. In Subsection \ref{sec:grading} we define the Lagrangian gradings, which are part of the data of a D-brane in the Fukaya category. Then in Subsection \ref{sec:calcs} we calculate morphisms in Lemmas \ref{lem: F1_mors} (within $\mathcal{A}_L$), \ref{lem:relate_to_Ca} ($\mathcal{A}_L$ to $\mathcal{A}_K$), \ref{lem:orthoA} ($\mathcal{A}_K$ to $\mathcal{A}_L$), and \ref{lem:morsF2} (within $\mathcal{A}_K$). In Subsection \ref{sec:comp} we calculate the product structure on both sides. This requires an analysis of gradings to determine what intersection points appear in the product, which is Lemma \ref{lem:compn_final}. We then prove the main theorem, Theorem \ref{thm}, in Section \ref{sec:proof}.

{\bf Acknowledgements.} We thank Denis Auroux for helpful comments in formulating the direction of this project and for explaining his joint work \cite{AA24}. We thank Hiro Lee Tanaka, Sheel Ganatra, and Zack Sylvan for helpful discussions on wrapping in Fukaya categories, Christian Schnell and Alexander Kuznetsov for helpful discussions on semi-orthogonal decompositions of $D^b\text{Coh}$, and Katrin Wehrheim for helpful discussions on moduli spaces. Thanks to Haniya Azam, Hiro Lee Tanaka, and Sheel Ganatra for helpful comments on an earlier draft.

\section{B-side semi-orthogonal decomposition}\label{sec:Bside}
Let $\cL^j \boxtimes \cO_\bC:= \pr_V^* \cL^j \otimes \pr_\bC^* \cO_\bC$. Morphisms in the bounded derived category of coherent sheaves are Ext groups. Recall Equation \eqref{eq:sod_Bside_no_wrap} that 
$$
D^b\Coh(X) = \langle j_*(p^* D^b\Coh(H  \times \{0\})\otimes \cO_p(-1)) , \pi^* D^b\Coh(V \times \bC) \rangle
$$
where
\begin{center}
\begin{tikzcd}
E \arrow[r, "j"] \arrow[d, "p"] & X=\text{Bl}_{H \times \{0\}} V \times \mathbb{C} \arrow[d, "\pi"] \\
H \times \{0\} \arrow[r, "i"] & V \times \mathbb{C}
\end{tikzcd}
\end{center}
commutes.
\subsection{Morphisms from $V \times \mathbb{C}$ to $H$}
By semi-orthogonality, we know that for $k_1, k_2 \in \mathbb{Z}$,
\begin{equation}\label{eq:orthoB}
\Ext(\pi^*(\cL^{k_1} \boxtimes \cO_\bC), j_*(p^* i^*(\cL^{k_2} \boxtimes \cO_\bC) \otimes \cO_p(-1)))=0.
\end{equation}

\subsection{Morphisms from $H$ to $V \times \mathbb{C}$}

By the adjunction formula, $K_X = \pi^*K_{V \times \bC} \otimes \cO(E)$. As in \cite[Proposition 2.5.5]{Huy05}, we define the relative canonical bundle as follows,
$$
K_{E/X}:=K_E \otimes j^* K^{-1}_{X}= j^*(K_X \otimes \cO(E))  \otimes j^* K^{-1}_{X}= \cO_E(E)
$$
where $\cO_E(E)=j^*\cO(E) = \cO_p(-1)$. Furthermore, $j^!$ admits a left adjoint $j_*$ for $j:E \to X$ where 
$$
j^!(\cdot) = j^*(\cdot) \otimes K_{E/X}[\dim_\bC E - \dim_\bC X]=j^*(\cdot) \otimes \cO_E(E)[-1].
$$
Thus
\begin{equation}\label{eq:1stpart-nonorthoBside}
    \begin{aligned}
    & \Ext_X(j_*(p^*i^*(\cL^{k_2} \boxtimes \cO_\bC)\otimes \cO_E(E)),\pi^*(\cL^{k_1} \boxtimes \cO_\bC))\\
    &=\Ext_E(p^*i^*(\cL^{k_2} \boxtimes \cO_\bC) \otimes \cO_E(E), j^!\pi^*(\cL^{k_1} \boxtimes \cO_\bC))\\
    & = \Ext_E(p^*i^*(\cL^{k_2} \boxtimes \cO_\bC) \otimes \cO_E(E), j^*\pi^*(\cL^{k_1} \boxtimes \cO_\bC)\otimes \cO_E(E)[-1]).
    \end{aligned}
\end{equation}
Note that $\otimes \cO_E(E)$ is an auto-equivalence of $D^b\text{Coh}(X)$, thus its contribution cancels. Also, the statement that Equation \eqref{eq:blowup_diag} commutes is that $\pi \circ j = i \circ p$. Thirdly, $p: E \to H  \times \{0\}$ is surjective and $p^*$ is fully faithful, an isomorphism on hom sets. Hence we can simplify to obtain
\begin{equation}\label{eq:nonorthoB}
    \begin{aligned}
    & \Ext_E(p^*i^*(\cL^{k_2} \boxtimes \cO_\bC), j^*\pi^*(\cL^{k_1} \boxtimes \cO_\bC)[-1])\\
    &=\Ext_E(p^*i^*(\cL^{k_2} \boxtimes \cO_\bC), p^*i^*(\cL^{k_1} \boxtimes \cO_\bC)[-1])\\
    & \cong \Ext_H(i^*(\cL^{k_2} \boxtimes \cO_\bC),i^*(\cL^{k_1} \boxtimes \cO_\bC)[-1])\\
    & \cong \Ext_H(\cL^{k_2}|_H, \cL^{k_1}|_H)[-1]
    \end{aligned}
\end{equation}
where in the last step we restrict to $H \times \{0\}$. Now we show this matches with the symplectic side, including the shift $[-1]$ which corresponds to a choice of Lagrangian grading on the mirror.

\section{A-side semi-orthogonal decomposition}\label{sec:Aside}

The Fukaya category roughly consists of Lagrangian submanifolds as objects and cochain complexes generated by their intersection points as morphisms, graded by a Maslov index. The differential counts bigons between input and output intersection points, weighted by area (and also holonomy around the boundary of the disc, which we do not consider here, see Remark \ref{rem:loc_sys_assump}). Hamiltonian-equivalent Lagrangians should be isomorphic in the derived Fukaya category. This has led to increasingly more general definitions of the Fukaya category, as more mirrors are constructed.

In the Fukaya category of a closed symplectic manifold (compact with no boundary), no wrapping is needed for closed Lagrangians. This is the original $A_\infty$-category \cite{fooo1}. For a non-closed symplectic manifold, wrapping is needed for non-closed Lagrangians. So non-compact manifolds and those with boundary have wrapping. For example, the Fukaya category in \cite{AS10, GPS20}, which wraps with stops, applies to non-compact exact Liouville manifolds. \cite[\textsection 3.3]{GPS24} describe wrapping at a puncture as a correspondence between stop removal and localization at continuation maps.

Non-compact manifolds that appear as SYZ mirrors in many known examples are fibrations over  $\mathbb{C}$ or over a subset with boundary; these are the LG models. Seidel \cite{seidel} pioneered the first constructions of their Fukaya categories, specifically for non-compact exact Lefschetz fibrations. Therefore more generally for a symplectic fibration LG model, we refer to its Fukaya category as the Fukaya-Seidel category. To equip an LG model with a Fukaya-Seidel category, the base and the fiber determine the wrapping needed. For example, fibers given by closed manifolds have no wrapping, $(\mathbb{C}^*)^n$-fibers have wrapping, the base $\mathbb{C}$ of an LG model wraps but not fully due to one stop, and punctures in the base or fiber introduce full wrapping around the puncture. Once the wrapping behavior is determined, the Fukaya-Seidel category of the symplectic fibration LG model can be constructed by defining the continuation maps and showing they are well-defined.

In \cite{AA24} a fiberwise wrapped Fukaya-Seidel category is defined for non-compact non-exact LG models. In our LG model $(Y^0,W_0)$, there is one puncture in the base and generic fibers are closed $T^4$. Since the fibration is not Lefschetz, we avoid the singular fiber. So the projection under $W_0$ of Lagrangians in $\cA_L$ are U-shapes around the critical value. And in $\mathcal{A}_K$ they project under $W_0$ to a curve that can fully wrap in the base around the puncture. All projected Lagrangians still wrap partially at infinity due to the one stop for LG models 
\cite[Appendix A]{ushape}, \cite[\textsection 1.2, Example 1]{GPS20}, \cite{AA24}. Our continuation maps are defined in Definition \ref{def:contn_maps}.

Note that the complex dimension of the generalized SYZ mirror $(Y,-v_0)$ to $\Sigma_2$ is two larger than that of $\Sigma_2$; if one takes $\text{Crit}(v_0)$ as the mirror, which is complex dimension 1, then a Fukaya category on that has been constructed in \cite{AEK24}.

\subsection{Objects in $\mathcal{A}_L$, $\mathcal{A}_K$}\label{sec:objs}

The base of $W_0$ is $\bC^*$ which is topologically a cylinder, and a generic fiber is $T^4$. So there are two pictorial representations for the base: $\bC^*$ and the cylinder.  Curves drawn in a plane represent projections of Lagrangians under $W_0$, while curves drawn on a cylinder represent projections of Lagrangians under $W_0$ composed with $\bC^* \to \bR \times S^1$ given by $e^{2\pi (\mu+i \theta)} \mapsto (\mu,\theta)$. Lagrangians are ``thimble-like" in the sense of \cite[Definition 8.19]{GPS24}. 

\begin{figure}[h]
    \centering
   {\fontsize{59pt}{61pt}\selectfont
\resizebox{80mm}{!}{
\begingroup%
  \makeatletter%
  \providecommand\color[2][]{%
    \errmessage{(Inkscape) Color is used for the text in Inkscape, but the package 'color.sty' is not loaded}%
    \renewcommand\color[2][]{}%
  }%
  \providecommand\transparent[1]{%
    \errmessage{(Inkscape) Transparency is used (non-zero) for the text in Inkscape, but the package 'transparent.sty' is not loaded}%
    \renewcommand\transparent[1]{}%
  }%
  \providecommand\rotatebox[2]{#2}%
  \newcommand*\fsize{\dimexpr\f@size pt\relax}%
  \newcommand*\lineheight[1]{\fontsize{\fsize}{#1\fsize}\selectfont}%
  \ifx\svgwidth\undefined%
    \setlength{\unitlength}{637.18484233bp}%
    \ifx\svgscale\undefined%
      \relax%
    \else%
      \setlength{\unitlength}{\unitlength * \real{\svgscale}}%
    \fi%
  \else%
    \setlength{\unitlength}{\svgwidth}%
  \fi%
  \global\let\svgwidth\undefined%
  \global\let\svgscale\undefined%
  \makeatother%
  \begin{picture}(1,0.35795928)%
    \lineheight{1}%
    \setlength\tabcolsep{0pt}%
    \put(0,0){\includegraphics[width=\unitlength,page=1]{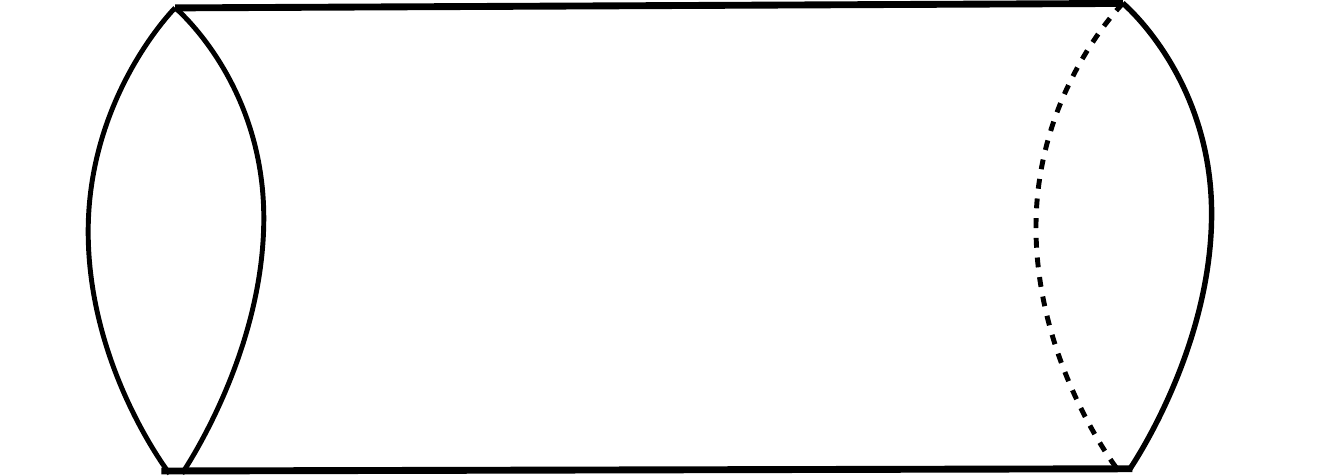}}%
    \put(-0.00213801,0.16324769){\color[rgb]{0,0,0}\makebox(0,0)[lt]{\lineheight{1.25}\smash{\begin{tabular}[t]{l}$0$\end{tabular}}}}%
    \put(0.92051293,0.1678199){\color[rgb]{0,0,0}\makebox(0,0)[lt]{\lineheight{1.25}\smash{\begin{tabular}[t]{l}$\infty$\end{tabular}}}}%
    \put(0.49010153,0.18178587){\color[rgb]{0,0,0}\makebox(0,0)[lt]{\lineheight{1.25}\smash{\begin{tabular}[t]{l}x\end{tabular}}}}%
    \put(0,0){\includegraphics[width=\unitlength,page=2]{cylinder_latex.pdf}}%
    \put(0.63672148,0.22560327){\color[rgb]{0,0,0}\makebox(0,0)[lt]{\lineheight{1.25}\smash{\begin{tabular}[t]{l}$\frac{1}{2\pi}\log(W_0(L_i))$\end{tabular}}}}%
    \put(0,0){\includegraphics[width=\unitlength,page=3]{cylinder_latex.pdf}}%
    \put(0.39235491,0.08740976){\color[rgb]{0,0,0}\makebox(0,0)[lt]{\lineheight{1.25}\smash{\begin{tabular}[t]{l}$\frac{1}{2\pi}\log(W_0(K_j))$\end{tabular}}}}%
  \end{picture}%
\endgroup%
}
    \caption{Projections of the two types of Lagrangians to the cylinder model for the base of $W_0:Y^0 \to \bC$}
    \label{fig:base_cylinder}
    }
\end{figure}

\begin{definition}[Parallel transport, {\cite[Equation (6.2)]{symp_intro}}]\label{def:par_transport}
Let $\Phi_{\gamma(0) \to \gamma(t)}: W_0^{-1}(\gamma(0)) \to W_0^{-1}(\gamma(t))$ denote parallel transport in $(Y^0,W_0)$. It is taken with respect to $\omega|_{Y^0}$, for $\omega$ on $Y$ defined in \cite[\textsection 3.5]{Ca20}, over an embedded path $\gamma: [0,t] \to \bC^*$ avoiding the critical value $\text{critv}(W_0) = -T^\epsilon$.
\end{definition}

We define two subcategories $\mathcal{A}_L, \mathcal{A}_K$ of the Fukaya-Seidel category of $(Y^0, W_0)$. Define a point 
\begin{equation}\label{eq:starting_pt}
b_S=-S \in \bC^*, \quad S \in (T^\epsilon, T^\varepsilon).
\end{equation} 
Let $\gamma: \bR \to \bC^* \backslash\{-T^\epsilon\}$ be an embedded path as above, avoiding $\text{critv}(W_0)$ and passing through $\gamma(0):=b_S$. We parallel transport slope $k$ Lagrangians in the $b_S$-fiber over arcs in the base to obtain 3-dimensional Lagrangians in the 6-dimensional total space of $Y^0$.

\begin{definition}[Fiber Lagrangians]
    Define slope $k$ Lagrangians in the fiber $W_0^{-1}(b_S)\cong T^4$ by
    $$
    \ell_k := \{(\xi_1,\xi_2,\theta_1,\theta_2)\in T^4 \mid (\theta_1,\theta_2) = -k\left( \begin{matrix} 2 & 1 \\ 1 & 2 \end{matrix} \right)^{-1}\left( \begin{matrix} \xi_1\\ \xi_2 \end{matrix} \right) \}
    $$
    where $(\theta_1,\theta_2) = \frac{1}{2\pi}(\arg(t_1),\arg(t_2))$ for $(t_1,t_2) \in (\bC^*)^2$ the affine coordinates corresponding to monomials $\chi^{(1,0,0)}$ and $\chi^{(0,1,0)}$ on the polytope $\Delta_Y$ of Equation \eqref{eq:polytope_def_Y}, and $(\xi_1,\xi_2)$ are defined in Equation \eqref{eq:mom_map_coords}.
\end{definition}

\begin{definition}[Objects of $\mathcal{A}_L$]\label{def:U_shape} 
Objects of $\mathcal{A}_L$ are $\{L_j[k]\}_{j,k \in \mathbb{Z}}$ defined as 
$$
L_j := \bigcup_{-\infty < t < \infty} \Phi_{\gamma_{L_j}(0) \to \gamma_{L_j}(t)}(\ell_j)
$$
where 
$$
\gamma_{L_j}(t) =r(t) e^{2\pi \mathbf{i} \theta(t)}: (-\infty,\infty) \to \bC^*
$$  
such that $r(t), \theta(t): \bR \to \bR$ are smooth with
\begin{equation}\label{eq:U-shape_base}
(r(0),\theta(0))=(S, -\frac{1}{2}) , \qquad r(t) \propto |t|, \dot\theta(t) = 0 \;\; \forall t \notin r^{-1}([R_2,R_4]). 
    \end{equation}
    Let $\gamma_{L_j}(t)$ initially near $t=0$ stay within rings $R_2$ and $R_4$ in Figure \ref{fig:concentric_rings}, and take a U-shape around the critical value at $\times = -T^\epsilon$ and below the puncture at 0. In particular, $\gamma_{L_j}(0)=-S$. When $r(t)$ reaches $R_4$, the ends are at constant angles away from $-\pi$
    \begin{equation}\label{eq:radial_ends}
     \theta(t) = \begin{cases}
            \theta_{h_1}(L_j)& \forall t \gg 0\\
            \theta_{h_2}(L_j) & \forall t \ll 0 
            \end{cases}
    \end{equation}
    and strictly increasing radial direction. See $W_0(L_{j_1})$ in Figure \ref{fig:1cot_2U}, where the $\theta_{h_1}(L_j)$ end corresponds to the lower end of the U-shape. 

    \end{definition}
    
    \begin{remark}
    U-shaped Lagrangians have appeared in the literature such as \cite[\textsection A.2]{ushape}, \cite{ACLL_LG}, and \cite{AA24}. Here we use the model that ends of the U-shape project to angular rays away from 0 in $\mathbb{C}^*$. 
    \end{remark}

    \begin{definition}[Objects of $\mathcal{A}_K$]\label{def:F2objs}

Objects of $\mathcal{A}_K$ are $\{K_j[k]\}_{j,k \in \mathbb{Z}}$ defined as 
$$
K_j := \bigcup_{1 < t < \infty} \Phi_{\delta_{K_j}(0) \to \delta_{K_j}(t)}(\ell_j)
$$
where 
$$
\delta_{K_j}(t) =r(t) e^{2\pi \mathbf{i} \theta(t)}: [0,\infty) \to \bC
$$  
such that $r(t), \theta(t): [0,\infty) \to \bR$ are smooth with
\begin{equation}\label{eq:arc-shape_base}
(r(0),\theta(0))=(S, -\frac{1}{2}), \quad r(1)=0,  \quad (r(t), \theta(t)) =(t-1, \theta_h(K_j)) \;\; \forall t > 1
\end{equation}
    so that $\delta_{K_j}$ is an arc from $-S$ to 0 avoiding the critical value at $-T^\epsilon$ by going above it, and is then a single arc at fixed angle $\theta_h(K_j)$. To be the correct mirror, we take the arc to lie above any U-shape ends in the $\mathbb{C}^*$ model, so it lies counterclockwise from the U-shape ends at angles $\theta_{h_1}(L_{j'}),\theta_{h_2}(L_{j'})$  for all $j' \in \mathbb{Z}$, and clockwise from the ray at $-\pi$. Parallel transport starts from the fiber over $\delta_{K_j}(0)=-S$, but the Lagrangian is defined only over the part of the arc for $t>1$ because in $Y^0$ we puncture the base at 0, which is where the curve passes through at $t=1$. See $W_0(K_j)$ in Figure \ref{fig:plane_KL}. In the cylinder model, $W_0(K_j)$ is a cotangent fiber of $T^*S^1$ at height $\theta = \theta_h(K_j)$, see $W_0(K_j)$ in Figure \ref{fig:cylinder_KL}.
\end{definition}
    
We will define the gradings on the Lagrangians in Subsection \ref{sec:grading}.

\begin{remark}
We fix the arc to go above the U-shaped ends when viewed in the base of $W_0$. Had we taken it to go below, it would swap the order of the semi-orthogonal decomposition and be mirror to a different embedding of the components on the B-side too.
\end{remark}

\subsection{Definition of morphisms}\label{sec:mors}

In this section we define the morphisms. We give some motivation for the process. The morphisms are given by the wrapped Lagrangian intersection Floer cohomology theory, described on the chain-level in \textsection \ref{subsec:wrap_conv} as intersections of Lagrangians and denoted by $CF(L_0,L_1)$. Furthermore, Hamiltonian-isotopic Lagrangians should be quasi-isomorphic, meaning isomorphic when we derive the category. There is a convenient tool from homotopy theory that does the job for non-compact Lagrangians, called a homotopy colimit \cite{AS10,AA24}. As explained in the introduction to \cite{Hi03}, a goal of homotopy theory is to identify homotopic maps, but a homotopy on topological spaces may not make sense in other categories. To identify homotopic morphisms $f$ and $g$, we define a set called the \emph{weak equivalences} to be where $f-g$ lands in, and then localize the category at weak equivalences. To take a homotopy colimit of a diagram one needs a model category: a category with three sets of morphisms called weak equivalences, fibrations, and cofibrations. Quillen axiomatized such a category as a place where one can do homotopy theory, calling them ``a category of models
for a homotopy theory" \cite{Qu67}. Note that more modern approaches in homotopy theory use $\infty$-categories instead of model categories. In the wrapped Fukaya category, weak equivalences will be multiplication by quasi-units. Furthermore, we will take the images of objects in the Fukaya category under the Yoneda embedding, and work inside the resulting triangulated category. 

We motivate how to glue together equivalent objects, that is, how to identify homotopic morphisms. An example \cite[\textsection 1.5]{We94} is that of making a simplicial complex $X$ contractible by gluing it to a point. Taking its cone $CX$ then $C_j(CX) = C_{j}(X) \oplus C_{j-1}(X),\; j \geq 1$ from simplices fully in the base and those through the cone vertex $s$. Checking the differential and quotienting by $C_0(s)$ generated by the cone vertex, this is just $C_j(CX)/C_j(s) = \text{Cone}_j(\mathbf{1}_\bullet: C_\bullet(X) \to C_\bullet(X))$. So the notion of topological and algebraic cone coincide.

In practice when computing morphisms of Lagrangians in a Landau-Ginzburg model, the informal way of doing so is to ``push the ends of the input Lagrangian, when projected to the base, to lie above the ends of the output Lagrangian." In other words, if $L_0^+$ is $L_0$ with the ends pushed up, we would like a morphism $L_0 \to L_1$ to be a \emph{roof} 
\begin{tikzcd}
    & L_0^+ \arrow[dl, "e"'] \arrow[dr, "p"] & \\
    L_0 & & L_1
\end{tikzcd}
where $e \in CF(L_0^+,L_0)$ is a quasi-unit (defined in Equation \eqref{eq:quasi-unit}) and $p \in L_0^+ \cap L_1$ is an intersection point, a morphism of $CF(L_0^+,L_1)$. This roof is the standard construction of a left fraction \cite[Prototype 10.3.5]{We94}, a morphism in a category localized at $e$. To compose roofs would involve a pullback along $L_0^+ \rightarrow L_1 \leftarrow L_1^+$ \cite[Definition 10.3.4, Ore condition]{We94}. However, it is not evident what this pullback would be in practice. 

We instead take a homotopy colimit over Yoneda modules $\mathcal{Y}_{L^k}=\cO(-,L^k)$ in the abelian category of $\mathcal{O}$-modules for a category $\mathcal{O}$ directed by amount wrapped, defined in Equation \eqref{eq:dir_cat}. The objects $L^k$ are indexed by wrapping an amount $k$ defined in Equation \eqref{eq:Lag_k_wrapped}. The homotopy colimit identifies objects that have a homotopy between them, preserving weak equivalences which are when a map of spaces induces an isomorphism on homotopy groups (so a particular inverse isn't needed, as for a strong equivalence). This is similar to how a colimit glues along connecting maps. However we need the homotopy version, for example a limit such as the pullback of $0 \hookrightarrow [0,1] \hookleftarrow 1$ is empty even though $0,1,[0,1]$ are all homotopy equivalent, while the homotopy limit would be $\{\gamma: [0,1] \to [0,1] \mid \gamma(0)=0, \gamma(1)=1\}$, \cite[Exercise 6.5.4]{Ri14}. Weak equivalences here are multiplcation by quasi-units, and we localize at quasi-units in order to identify non-compact Lagrangians equivalent under wrapping in the derived Fukaya-Seidel category.

We follow \cite[\textsection 3.5]{AA24}, where they use standard constructions similar to \cite[\textsection (3g)]{AS10}, \cite[\textsection 3.1.3]{GPS20}. These refer to \cite{LO06} for  localization of $A_\infty$-categories, a process that generalizes localization of categories \cite[\textsection 10.3]{We94}, \cite[\textsection 1]{Ne01}, \cite[Example 11.5.11]{Ri14}. 

\subsubsection{Wrapping conventions and the cochain complex}\label{subsec:wrap_conv}

For two transversely intersecting Lagrangians $L_0,L_1$, the Floer cochain complex without local systems is
\begin{equation}\label{eq:chain_level_mor}
    CF^m(L_0,L_1) = \bigoplus_{\substack{p \in L_0 \pitchfork L_1\\ \deg(p)=m}} \bC \cdot p.
\end{equation}
If they are not transverse, we perturb by positive and negative isotopies. We describe what we mean by positive. Let $L$ be a Lagrangian of the form $L_j$ or $K_j$. Recall that a Lagrangian isotopy is a homotopy 
   $$
    \phi: L \times [0,1] \to Y^0
    $$
which is a smooth embedding. We have a horizontal distribution, where $Y^0_b$ denotes the fiber $W_0^{-1}(b)$, given by
$$
\Hor_p=\left(\ker d_pW_0 \right)^{\perp_\omega}=(T_pY^0_{W_0(p)})^{\perp_\omega}\subset T_pY^0
$$
for the symplectic form $\omega$ because $W_0$ is a symplectic fibration, that is, $\omega$ restricts to a symplectic form on smooth fibers. Isotopies in a Landau-Ginzburg model \cite[\textsection 2]{ACLL_LG} are constructed by parallel transport over homotopies $\delta(s,t)$ of curves in the base $\bC$.

Let $\rho^t$ be the flow of the horizontal lift of the Hamiltonian vector field for the Hamiltonian $H$ defined as follows. Let $w$ be the complex coordinate on the base $\bC^*$ of $W_0$. There are several commonly used real coordinate systems on the base. We express them here for completeness. We have 
\begin{equation}
    w=w_x+iw_y=re^{2\pi i\theta}=e^{2\pi(\mu+i \theta)}
\end{equation}  
where the last expression is in action-angle coordinates $(\mu,\theta)$ for $\mu(r) = \frac{1}{2\pi} \log r$. By \cite[Remark 3.2]{ACLL_LG}, the standard form $\omega_\bC$ is related to $\omega$ by
\begin{equation}
({W_0}^*\omega_\bC)|_{\text{Hor}} = |dW_0|^2_{J,\omega} \omega|_{\text{Hor}}.
\end{equation}
Here on the base we have $\mathbb{C}^*$ instead of $\mathbb{C}$, so for $w$ the coordinate on $\mathbb{C}^*$ and under the map $\exp(2\pi \cdot): \mathbb{C} \to \mathbb{C}^*$ the standard form on $\mathbb{C}$ becomes the following on $\mathbb{C}^*$  
\begin{equation}
    \omega_{\bC^*}=\frac{i}{2} d \left(\frac{1}{2\pi}\log w\right) \wedge d\left(\frac{1}{2\pi}\log \overline{w}\right) =\frac{i}{2} \frac{dw \wedge d \overline{w}}{(2\pi)^2|w|^2} =d \mu \wedge d \theta.
\end{equation}

For $\mu \neq 0$ and $\chi(\theta):[-\pi,\pi) \to [0,1]$ a bump function supported away from $-\pi$, consider concentric rings in the base around 0 defined by $0<R_1 < R_2<T^\epsilon < S< R_3 < R_4< T^\varepsilon$ as in Figure \ref{fig:concentric_rings}. 
\begin{figure}
    \centering
    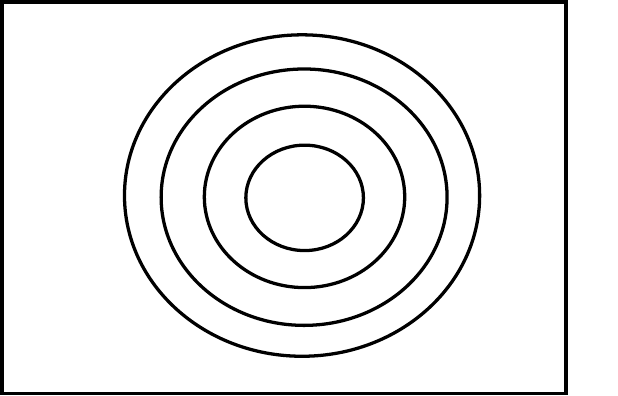
    \caption{$Y^0$ is a fibration over the $T^\varepsilon$ disc about 0. U-shape curves $W_0(L_j)$ pass through $b_S=-S$. Arcs $W_0(K_j)$ start from, but don't include, 0. The singular fiber of $W_0$ is above $-T^\epsilon$ marked by an $\times$.}
    \label{fig:concentric_rings}
\end{figure}
Define
\begin{equation}\label{eq:Hamilt}
   H(w) :=
\begin{cases}
\chi(\theta) \cdot \mu &  R_4 < |w| < T^\varepsilon\\
\text{interpolate} & |w| \in [R_3,R_4]\\
0 & R_2< |w| < R_3 \\
\text{interpolate} & |w| \in [R_1, R_2]\\
\mu & |w| < R_1
\end{cases}
\end{equation}
where recall $|W_0|: Y_0 \to [0,T^\varepsilon]$ because $W_0$ is a fibration only over a small disc. So within this domain but outside of the annulus 
\begin{equation}\label{eq:annulus}
    A=\{w \in \bC \mid R_1 < |w| < R_4 \}
\end{equation}
Lagrangians project under $W_0$ to arcs which wrap slightly for large $|w|$ and fully around 0. Within the smaller annulus $A=\{w \in \bC \mid R_2 < |w| < R_3 \}$ there is no movement, including at $-S$ where Lagrangians are defined as parallel transport from in Equation \eqref{eq:starting_pt}.
Define
\begin{equation}\label{eq:mov_Lag_isotopy}
\rho^t: Y^0 \to Y^0
\end{equation}
to be the flow of a vector field $\xi^\#$ which is the horizontal lift of $\xi$ defined by $\iota_{\xi}\omega_{\bC^*} =-dH$. (Note that $\xi$ is sometimes called $X_H$ in the literature, but in \cite{AA24} $X_H$ is used for a wrapping Hamiltonian $H$ in the fiber while $\xi$ realizes the wrapping in the base. Since we are wrapping in the base and not the fiber, to be consistent with them, we use $\xi$.) 

The relation between positively or negatively wrapping with respect to $\lambda:=\mu d \theta$ (where $\omega_{\bC^*} = d \lambda$) and counterclockwise/clockwise wrapping around 0 and infinity is as follows.
\begin{enumerate}
    \item Near $\infty$, by which we mean in the region $R_4 < r < T^\varepsilon$ near the boundary of $W_0(Y^0)$,
    \begin{equation}
        \iota_{-\chi' \mu \dd_\mu + \chi \dd_\theta} (d\mu \wedge d\theta) = -dH=-d(\chi(\theta) \mu)
    \end{equation}
    so on $\xi|_{\{R_4 < r < T^\varepsilon\}}=-\chi' \mu \dd_\mu + \chi \dd_\theta$ we find that
    \begin{equation}\label{eq:pos_wrap_partial}
        \lambda(-\chi' \mu \dd_\mu + \chi \dd_\theta) = \mu d \theta(-\chi' \mu \dd_\mu + \chi \dd_\theta) = \chi \mu >0
    \end{equation}
    and wrapping is positive and counterclockwise.
    \item Around 0 we flow clockwise. This is because under the conformal coordinate change $w \mapsto \tilde w:=w^{-1}$ on $\mathbb{C}^*$, then $(\mu,\theta)\mapsto (\tilde \mu, \tilde \theta) = (-\mu,-\theta)$ and
    \begin{equation}
    \omega_{\bC^*}=d\tilde \mu \wedge d \tilde \theta.
    \end{equation}
    Thus for $\tilde r=r^{-1} >1/R_1$
    \begin{equation}
    -dH = d \tilde \mu =  \iota_{-\dd/\dd \tilde \theta} d\tilde \mu \wedge d \tilde \theta
    \end{equation}
    while 
    \begin{equation}\label{eq:pos_wrap_full}
    \lambda\left(-\frac{\dd}{\dd \tilde \theta}\right) = \tilde \mu d \tilde \theta\left(- \frac{\dd}{\dd \tilde \theta}\right)= -\tilde \mu<0
    \end{equation}
    so we wrap negatively/counterclockwise in $\tilde \theta$. Inverting back to $w$, we wrap negatively/clockwise in $\theta$.  
\end{enumerate}

Now let 
\begin{equation}\label{eq:Lag_k_wrapped}
L^k := \rho^{-\tilde \epsilon k}(L)    
\end{equation}
for small positive number $\tilde \epsilon$. So larger $k$ means more negatively wrapped. Following \cite[Equation (3.40)]{AA24}, we define a directed category $\mc{O}$ with objects the Lagrangians $\{L^j\}_{j \in \bZ}$ so they form an exceptional collection 
$$
\cO(L^{j_1},L^{j_2}) := CF(L^{j_1},L^{j_2}), \quad j_1 < j_2.
$$
In other words, when the ends of the input projected Lagrangian near infinity lie above/counterclockwise from the ends of the output, there is a Floer complex over their intersections. In the other direction it is 0. 

The differential $\dd: CF^m(L_0,L_1) \to CF^{m+1}(L_0,L_1)$ on the cochain complex of transversely intersecting Lagrangians in Equation \eqref{eq:chain_level_mor} is
\begin{equation}\label{eq:usu_HF}
    \dd(p) = \sum_{q \in L_0 \cap L_1} \sum_{\substack {[u]:[u]=\beta \in \pi_2(Y^0,L_0 \cup L_1) \\ \text{ind}(u)=1}} \# \cM(p, q;[u],J)/(u \sim u_a) e^{-2\pi \int_\beta \omega} \cdot  q
\end{equation}
where 
\begin{equation}\label{eq:diff_mod_space}
\cM(p, q;[u],J):=\left\{u: (\bR \times [0,1], \bR \times \{0\} \cup \bR \times \{1\} )\to (Y^0,L_0 \cup L_1) \middle\vert \begin{array}{l} \overline\partial_J(u)=0,\ 
 E(u)<\infty,\\ u(\bR,0) \subset L_0, \ u(\bR,1) \subset L_1, \\\lim_{s \to -\infty} u(s,t) = q,\\\  \lim_{s \to \infty} u(s,t) = p
\end{array}\right\}.
\end{equation}
We quotient by $u \sim u_a$ where $u_a(s,t) = u(s-a,t)$ for each $a \in \bR$, the translation in the $s$ direction, and $J$ is an almost complex structure compatible with $\omega$ which is also regular, meaning the linearization of $\overline{\dd}_J$ is surjective for all $u \in \cM(p, q;[u],J)$. In order for this to be well-defined, meaning in order for us to be able to count the moduli space, it suffices for it to be a 0-dimensional compact smooth manifold. The 0-dimensional part is ensured by choosing $p$ and $q$ so that $\deg(p) +1= \deg(q)$. The compact part is ensured by the curves $u$ satisfying a maximum principle, the way holomorphic curves do, and by excluding bubbling. And the smooth manifold part can be ensured by the existence of regular $J$.  

This was proven in \cite{Ca20} in the setting for $(Y,-v_0)$. The difference between the set-up in \cite{Ca20} and here is that we have a puncture at $0$ in the base so therefore we have additional Lagrangians fibered over single arcs that can wind infinitely many times around 0 in the base. So we have several cases to consider.

\subsubsection{Algorithm to derive the category}\label{sec:alg}

As outlined at the start of this chapter, here is the algorithm used to invert the quasi-units:

\begin{enumerate}
   \item For chain complexes bounded below, over an abelian category with enough projectives, there is a model category structure where the weak equivalences are the quasi-isomorphisms, the fibrations are the epimorphisms, and the cofibrations are injective maps ${\displaystyle i:C_{\bullet }\to D_{\bullet }}$ where the cokernel complex ${\displaystyle {\text{Coker}}(i)_{\bullet }}$ is a complex of projective objects \cite[\textsection 1.2 Example B]{Qu67}. It follows that the cofibrant objects are the complexes whose objects are all projective.  In our case (\cite[Equation (3.40)]{AA24}, \cite[Equation (4.2)]{Ca20}) the model category will be a category of modules over another category $\mc{O}$, 
\begin{equation}\label{eq:dir_cat}
    \mathcal{O}^*(L_0^{k_0},L_1^{k_1}) = \begin{cases}
        CF^*(L_0^{k_0}, L_1^{k_1}) & k_0 < k_1\\
        \bC \cdot \text{id} & k_0=k_1, L_0 = L_1 \\ 
        0 & \text{otherwise}
    \end{cases}
\end{equation}
where id acts as the identity morphism when composed with any $CF^*(L_0^{k_0},L_1^{k_1})$. Then the model category $\mathcal{A}$ is the category of $\mc{O}$-modules with objects $\mc{Y}_{L}$ given by Yoneda modules $X \mapsto \mc{O}(X,L)$ and quasi-isomorphisms are the quasi-units, which are defined in the next subsection, \textsection \ref{sec:qisos}. Counts of curves between multiple intersection points define the $A_\infty$-structure $\mu^k$.
    
    \item Construct the homotopy category by deriving $\mc{A}$ so that quasi-isomorphisms become isomorphisms. An object is therefore an equivalence class of a chain complex, up to quasi-isomorphism, and a morphism is an equivalence of a chain map, up to chain homotopy. This is a localization process \cite[\textsection 10.3]{We94} which has an $A_\infty$-version in \cite{LO06}.
    \begin{enumerate}
    \item The directed category $\bZ$ indexes the diagram in $\mathcal{O}$ for each Lagrangian $L$, given by $\ldots \to \mc{Y}_{L^k} \xrightarrow[]{\mu^2(e_{L^k},-)}\mc{Y}_{L^{k+1}} \to \ldots $ for $k \in \bZ$, with multiplication by quasi-units $e_{L^k}$. Define
    \[
    \mc{Y}_{L^\infty}:= \underset{k \in \mathbb{Z}}{\text{hocolim}}(\mc{Y}_{L^k}).
    \]
    
    \item To compute homs with $\mc{Y}_{L^\infty}$ we use the standard model in wrapped Floer theory
    $$
\mathcal{Y}_{L^\infty}=\text{Cone}\left(\bigoplus_{k=0}^\infty \mathcal{Y}_{L^k} \xrightarrow[]{\text{id}-e_{L^k}}\bigoplus_{k=0}^\infty \mathcal{Y}_{L^k}\right). 
$$
We will need the continuation maps of \textsection \ref{sec:contn} to relate Floer complexes of different amounts of wrapping, allowing us to derive and localize the morphisms in \textsection \ref{sec:mors_defn}.
\end{enumerate}
\end{enumerate}

\begin{remark}\label{rem:wrap}
The homotopy direct limit over a collection of isotopies avoids the definition depending on the amount of wrapping. Note that in $\mathbb{C}^*$, using a quadratic Hamiltonian to wrap as in \cite[\textsection 4.2]{fuk_intro} is equivalent to a colimit by linear Hamiltonians, the original definition of the wrapped Fukaya category \cite[\textsection 5.1]{AS10}, by \cite[Proposition 6.1.9]{OT24}. We will use this below in  Subsection \ref{sec:grading}: Lagrangian gradings.
\end{remark}

\subsubsection{Quasi-isomorphisms: quasi-units}\label{sec:qisos}

Following the convention of \cite[Equation (3.42)]{AA24}, a quasi-unit roughly counts certain curves $\Sigma:=\mathbb{D}\backslash \{1\} \to Y^0$ with one moving Lagrangian boundary condition. The boundary condition is defined by $\rho^{-\tilde \epsilon g_e(s)}(L)$ for $g_e:\bR \cong \partial \Sigma \to [k,k+1]$ monotonically increasing smooth and constant near the ends, meaning there is some $K\in \bR$ so 
\begin{equation}
    g_e(s) = \begin{cases}
        k, & s<-K\\
        k+1, & s>K\\
        \text{interpolate increasing}, & s \in [-K,K]
    \end{cases}
\end{equation}
In particular, the Lagrangian isotopy is traversed in reverse along the boundary from $L^k$ to $L^{k+1}$.  

The quasi-unit is the image of 1 under the PSS isomorphism $PSS: H^*(L) \to HF^*(L^k,L^{k+1})$. We define it on the chain-level by a disc count, 
\begin{equation}\label{eq:quasi-unit}
    e_{L^k} := PSS(1)= \left[\sum_{p \in L^k \cap L^{k+1}} \sum_{\substack{[u]:[u]=\beta \in \pi_2(Y^0,\bigcup_{s \in \bR} \rho^{-\tilde \epsilon g_e(s)}(L^k))\\ \text{ind}(u)=0}} \# \mathcal{M}(p;[u],J)  e^{-2\pi \int_\beta \omega} \cdot p\in CF(L^k, L^{k+1})\right], \\
\end{equation}
\begin{equation}
\begin{aligned}
    & \mathcal{M}(p;[u],J) = \{u: (\bD\backslash \{1\}, \dd \bD\backslash \{1\}) \to (Y^0, \bigcup_{s \in \bR} \rho^{-\tilde \epsilon g_e(s)}(L^k))\mid \overline{\dd}_J(u)=0,\\
    & u(s,0) \in \rho^{-\tilde \epsilon g_e(s)}(L^k), \lim_{s \to \pm\infty} u(s,0) = p \}
    \end{aligned}
\end{equation}
where we use coordinates $s+it$ on the closed upper half plane $\overline{\mathbb{H}}$ under the biholomorphism $\overline{\mathbb{H}}\to \mathbb{D}\backslash \{1\}$ given by $z=s+it \mapsto \frac{z-i}{z+i}$.

\subsubsection{Inverses: continuation maps}\label{sec:contn}

Since we will see in Claim \ref{claim:q-iso_wrap_limit} that the input and output in morphisms wrap in opposite directions, we need an inverse to quasi-units to connect Floer complexes in Lemma \ref{lem:contn_inverse}. These will be the continuation maps. The algorithm to define and use them is as follows. We follow the methodology of \cite[Chapter 3]{AA24}, noting that we have no $X_H$ term since we have no wrapping in the fiber. Another related setting is that of \cite[\textsection 8.6]{GPS24}, since curves projected to the base are then in an exact symplectic manifold. 

\begin{enumerate}
\item Define the isotopy of the moving Lagrangian boundary according to the wrapping needed. We use that in Equation \eqref{eq:mov_Lag_isotopy} as a lift of the Hamiltonian vector field associated to the Hamiltonian in the base in Equation \eqref{eq:Hamilt}, since we have no wrapping in the fiber. The moving Lagrangian boundary condition introduces a perturbation into the Cauchy-Riemann equation \cite[Remark 1.10]{fuk_intro}, known as Floer's equation. Here, it is
    \begin{equation}\label{eq:perturbed_Floer}
        (du+ (\xi^\tau)^\# \otimes d \tau)^{0,1}=0
    \end{equation}
    for a suitable $\tau$ depending on two parameters (different from the $\tau$ used to denote the complex parameter on $X$). Continuation maps count these solutions.
    \item Show that the continuation map moduli space in Equation \eqref{eq:strip_moduli} can be counted.
    \begin{enumerate}
        \item A 0-dimensional manifold: By Fredholm theory, it suffices to show that a regular $J$ exists. This means that the linearization of the Floer operator in Equation \eqref{eq:perturbed_Floer} is surjective. This is a standard argument provided curves $u$ in the moduli space are shown to have a dense set of somewhere injective points, which follows from the choice of perturbation $\tau$. 
        \item Compact: this follows from Gromov compactness. This is a standard argument provided there's a maximum principle that the curves $u$ in the moduli space stay in a compact subset for a given relative homotopy class $[u]$. This follows from rescaling the curve to obtain a solution to the elliptic Cauchy-Riemann equation  $(du)^{0,1}=\overline{\dd}_J(u) = \frac{1}{2}(du + J \circ du \circ j)=0$ which satisfies a maximum principle on the interior, and then a boundary analysis for the boundary.
    \end{enumerate}
\end{enumerate}

Let $\Sigma = \bD \backslash\{p,q\}$, the complement of two points on the boundary of a disc, which is biholomorphic to $\bR \times [0,1]$. Let $s$ be the coordinate on $\bR$. A continuation map, roughly, counts pseudo-holomorphic strips $\Sigma \to Y^0$ with moving Lagrangian boundary conditions that, under $W_0$, have prescribed wrapping behavior according to the piecewise-defined Hamiltonian in Equation \eqref{eq:Hamilt}. 
\begin{definition}\label{def:lambda}
    The moving Lagrangian boundary used to define the continuation maps is
\begin{equation}\label{eq:Lambda}
\Lambda_s(L) = \rho^{-\tilde\epsilon \tilde\chi(s)}(L), \quad s \in \bR
\end{equation}
where $\tilde\chi: \bR \to [0,1]$ is a smooth monotonically decreasing function, constant near $-\infty$ and $+\infty$ at 1 and 0 respectively, and $\tilde \epsilon$ is small. 
\end{definition}

This isotopy moves in the opposite direction of the moving boundary condition on the quasi-unit. Because $L^k=\rho^{-\tilde \epsilon k}(L)$, the isotopy $\Lambda_s$ moves $L^{k+1}$ to $L^k$. That is, there exists $\tilde K$ so that 
\begin{equation}
    \Lambda_s(L^k) = \begin{cases}
         L^{k+1}, & s< -\tilde K\\
        L^k, & s > \tilde K
    \end{cases}.
\end{equation}
So in words, $\Lambda_s(L): \bR \to Y^0$ rotates Lagrangian ends of $W_0(L)$ counterclockwise outside a compact set around 0, meaning $\frac{d}{ds}\theta_h(\Lambda_s(L))>0$ for $\theta_h$ as in Definitions \ref{def:U_shape} and \ref{def:F2objs}, by Equation \eqref{eq:pos_wrap_partial}. When $L=K_j$, $W_0(\Lambda_s(L))$ also fully wraps clockwise around 0 by Equation \eqref{eq:pos_wrap_full}. 

We introduce a perturbation term moving in the direction of $\rho^{\tilde \epsilon \tilde \chi(s)}$ on the boundary:
\begin{equation}\label{eq:perturb_term}
(\xi^{\tau})^\# \otimes d\tau
\end{equation}
for $\tau(s,t):\bR \times [0,1] \to \bR$ a smooth function such that
\begin{equation}\label{eq:tau}
\tau(s,j)=\tilde \epsilon \tilde \chi(s), \quad j=0,1.
\end{equation}
Thus $\tau$ is a way to extend the Lagrangian perturbation on the boundary of the disc to the whole disc, providing enough parameters in the perturbation to ensure transversality of the moduli space of solutions to the perturbed equation in Equation \eqref{eq:perturbed_Floer}, while also allowing us to cancel the moving Lagrangian boundary condition in $\Lambda_s(L)$ by rescaling the map $u$. The perturbation term $(\xi^{\tau})^\# \otimes d\tau$ is valued in the vector $(\xi^\tau)^\#$ on the target and $d\tau$ inputs vectors on the domain, so tensored together it is the same type as $du$. It describes how $du$ should be perturbed infinitesimally over the domain of $u$. 

To ensure enough perturbation parameters to prevent solutions to the Floer equation that have a lot of symmetry because they are nowhere somewhere injective, we need $\tau$ to depend on the full set of two parameters on the domain $\Sigma$, so both $s$ and $t$. We also perturb with the correct orientation for a maximum principle to hold, that is, in directions out of the strip on the boundary. This is related to \cite[\textsection (8k), Equation (8.21)]{seidel} for moving boundary conditions as in continuation maps. (In cases such as for a differential on $CF(W_0(K_j),W_0(K_j))$, one can perturb in the $t$-direction only. That is, one can use $(\xi^\tau)^\# \otimes dt$, which is 0 on $\dd_s$ tangent to the boundary and $(\xi^\tau)^\#$ on $\dd_t$ normal to the boundary.)

Analogous to \cite[Equation (3.44)]{AA24}, define
\begin{definition}\label{def:contn_maps}
    The continuation maps $F_{L_0^k,L_1^j}: \mathcal{O}^*(L_0^k, L_1^j) \to \mathcal{O}^*(L_0^{k+1},L_1^{j+1})$ are
\begin{equation}\label{eq:contn_map}
    F_{L_0^k,L_1^j}(p)  = \sum_{q \in L_0^{k+1} \cap L_1^{j+1}}  \sum_{\substack {[u]:[u]=\beta \in \pi_2(Y^0,\cup_{s \in \bR} \Lambda_s(L_0^k) \bigcup \cup_{s \in \bR} \Lambda_s(L_1^j))\\ \text{ind}(u)=0}} \#\mathcal{M}(p,q;[u],J) e^{-2\pi \int_{\beta}  \omega} \cdot q 
    \end{equation}
\begin{equation}\label{eq:strip_moduli}
    \begin{aligned}
   \mathcal{M}(p,q;[u],J) & =\{u: (\bR \times [0,1], \bR \times \{0\} \cup \bR \times \{1\} ) \to (Y^0, \cup_{s \in \bR} \Lambda_s(L_0^k) \bigcup \cup_{s \in \bR} \Lambda_s(L_1^j) )  \mid \\ & (du+ (\xi^\tau)^\# \otimes d \tau)^{0,1}=0,
     u(s,0) \subset  \Lambda_{s}(L_0^k), u(s,1) \subset  \Lambda_{s}(L_1^j),\\
     & \lim_{s \to \infty}u(s,t)=p, \lim_{s \to -\infty}u(s,t)=q \}
    \end{aligned}
\end{equation}
where recall $\xi^t$ is the Hamiltonian vector field on the base dual to $dH$. Since $\rho^t$ is the flow of the horizontal lift $(\xi^t)^\#$, then $(\xi^\tau)^\#$ is a section of Hom($\Sigma, TY^0)$ which at a point $z_0 \in \Sigma$ is $(\xi^{\tau(z_0)})^\# = \frac{d}{dt} \rho^t\mid_{t=\tau(z_0)}$. 
\end{definition}

\begin{lemma}\label{lem:mod_mfld}
    The moduli space $\mathcal{M}(p,q;[u],J)$ is a smooth 0-dimensional manifold.
\end{lemma}

\begin{lemma}\label{lem:mod_cpct}
    The moduli space $\mathcal{M}(p,q;[u],J)$ is compact.
\end{lemma}

\begin{corollary}\label{cor:well_def}
    The moduli space $\mathcal{M}(p,q;[u],J)$ is a finite set of points which can be counted.
\end{corollary}

\begin{proof}[Proof of Corollary \ref{cor:well_def}]
By Lemmas \ref{lem:mod_mfld} and \ref{lem:mod_cpct}, the moduli space is a compact 0 dimensional manifold.
\end{proof}

\begin{proof}[Proof of Lemma \ref{lem:mod_mfld}] This is a standard result in the literature (\cite[\textsection 8]{AD14} for moduli spaces of cylinders between Reeb orbits, \cite[\textsection 6.7]{jholbk} for domain dependent $J$, and \cite[\textsection (9k), Equation (9.27)]{seidel}, \cite[Lemma 8.5]{AS10}, \cite[\textsection 4]{Se12} for discs). We apply them to our setting. The moduli space is 0-dimensional because we count index 0 discs. 

The moduli space is the vanishing of the differential operator $u \mapsto (du+ (\xi^\tau)^\# \otimes d \tau)^{0,1}$, thought of as a section of a Banach bundle, so by the Sard-Smale theorem it suffices to show the derivative (also known as the linearization) of the differential operator at each map $u$ in the moduli space, is surjective. (A finite-dimensional example of this process is the vanishing of $f(x,y,z) = x^2+y^2+z^2-a: \bR^3 \to \bR$, which cuts out a smooth manifold for $a \neq 0$, a sphere, because its linearization $df = [2x\; 2y\; 2z] \neq \underline{0}$ at each such $(x,y,z)\in f^{-1}(0)$.) We have three choices of datum to vary: $u$, $J$, and $\tau$. The linearization of $(du+ (\xi^\tau)^\# \otimes d \tau)^{0,1}$ is 
$$
(d\mathcal{F})_u(\dot u, \dot J, \dot \tau) = D_u \dot u + \frac{1}{2}\dot J \circ (du+ (\xi^\tau)^\#\otimes \tau) \circ j + \dot \tau^{0,1}
$$
where the first two terms are justified in \cite[Equation (4.15)]{Ca20} and the third term is from an infinitesimal perturbation of $\tau$. (In \cite[\textsection 8.4]{AD14} their Floer equation involves a perturbation from $JX(s,t)$ instead of $\tau$. Our $\dot \tau^{0,1}$ is similar to $(\delta Y)^{0,1}$ in \cite[Equation (9.26)]{seidel}, and see \cite[Equation (8.10)]{AS10} for perturbation term $-X \otimes \gamma$ and linearization with respect to $u$ and $J$.)

The argument to show that the image of $(d\mathcal{F})_u$ is surjective relies on $u$ containing a dense set of somewhere injective points. (See \cite[\textsection 8.6]{AD14}, \cite[Proposition 2.5.1]{jholbk} for spheres, or \cite[\textsection 4.5: Key regularity argument]{Ca20} for an example with disc configurations.) Then we have surjectivity because the set of points perpendicular to the image of $(d\mathcal{F})_u$ is zero. For if a nonzero $\eta$ is 0 when paired with image$(d\mathcal{F})_u$, then use an open set of somewhere injective points around a point $z$ where $\eta_z \neq 0$ to construct an element in image$(d\mathcal{F})_u$ that would pair nontrivially with that element. This contradicts that they paired to 0. So it's enough to show there is a dense set of somewhere injective points. 

Recall that an injective point $z \in \bD$ is one such that $d_zu \neq 0$ and $u^{-1}(u(z)) = \{z\}$. We make use of the perturbation $\tau$ to ensure that discs are somewhere injective. Decomposing discs with Lagrangian boundary condition to somewhere injective discs has been studied in \cite{KO00,lazz1,lazz2}. Here we show discs are already somewhere injective. Note that the boundary conditions prevent multiply covered discs.

\begin{lemma}\label{lem:inj}
    The set of injective points for a curve $u$ counted in Equation \eqref{eq:strip_moduli} is dense in $\mathbb{D}$.
\end{lemma}

\begin{proof}
To show non-injective points are isolated, first select $u: \Sigma \to Y^0$ a curve in the moduli space in Equation \eqref{eq:strip_moduli}. Suppose by contradiction that we can pick a $z_0 \in \Sigma$ such that there exists $z_0 \neq z_0'$ in $\Sigma$ and disjoint open neighborhoods of each, call them $U, U'$ so that there is a biholomorphism $\varphi: U \to U'$ where $u(z) = u\circ \varphi(z)$ for all $z \in U$ but $z \neq \varphi(z) \in U'$. Because $u|_{U} = u\circ \varphi$, we have $du|_{U} = du \circ d\varphi$ on $U$. But there exist vector $v$ and point $z\in U$ where $(\xi^{\tau(z)})^\# d\tau _{z}(v) \neq (\xi^{\tau(\varphi(z))})^\# d\tau _{\varphi(z)}(d\varphi(v))$. Then $u$ couldn't have solved the Floer equation in Equation \eqref{eq:perturbed_Floer} at $\varphi(z)$ if it did at $z$. The perturbation $\tau$ distinguishes that $z \neq \varphi(z)$ in the domain, even though from the perspective of the image of $u$ they are the same. Thus non-injective points are isolated and injective points are dense.
\end{proof}

This concludes the proof of Lemma \ref{lem:mod_mfld} that $\mathcal{M}(p,q;[u],J)$ is a smooth 0-dimensional manifold.
\end{proof}

Now we prove Lemma \ref{lem:mod_cpct} that the moduli space is compact. Since the moduli space of discs is a metric space \cite[p47]{jholbk}, it suffices to show it is sequentially compact, that every sequence has a convergent subsequence whose limit is in the space. This is ensured by Gromov compactness, named after \cite{Gr85}. To prove Lemma \ref{lem:mod_cpct}, we split into two parts. 
\begin{enumerate}[label=(\alph*)]    
    \item We exclude discs escaping to infinity, in Lemma \ref{lem:max}, by proving a maximum principle. Then Gromov compactness applies and states there are only certain types of bubbling that could be in the limit, Proposition \ref{prop:poss-configs}.
    \item We exclude the possible degenerations of curves, in the proof of Lemma \ref{lem:mod_cpct}.
\end{enumerate} 

\begin{proposition}\label{prop:poss-configs}
    A sequence of pseudo-holomorphic discs with Lagrangian boundary conditions, and satisfying a maximum principle, has a convergent subsequence, which limits to a pseudo-holomorphic curve that may degenerate. Possible degenerations are 
    \begin{enumerate}
        \item sphere bubbles (energy concentrates at an interior point)
        \item disc bubbles (energy concentrates at a boundary point)
        \item strip-breaking (energy concentrates at a limiting intersection point).
    \end{enumerate} 
\end{proposition}

\begin{proof}
    This is proved for compact symplectic manifolds and discs with Lagrangian boundary conditions in \cite[\textsection 4.6, Lemma 4.6.5]{jholbk}, which applies once we have the maximum principle in a compact set. See also \cite[Example 2.4.14, Theorem 2.4.36]{fooo1}.
\end{proof}

Consider the annulus $A$ from Equation \eqref{eq:annulus}. Outside of $A$, projected Lagrangian ends wrap positively under $\rho^t$ for increasing $t$. Recall that in an LG model, curves counted in structure maps are sections of the superpotential $W_0$ \cite[Equation (17.2)]{seidel}. In other words, because $u$ is 1-1 there exists $z(s,t) \in \bD$ uniquely defined by
\begin{equation}
(W_0 \circ u)(z(s,t))=A(s,t)
\end{equation}
where $A(s,t)$ parametrizes
\begin{equation}
    D:=(W_0 \circ u)(\bD).
\end{equation}
Then we can define $u_D:D \to Y^0$  
\begin{equation}\label{eq:make_section}
    u_D(A(s,t)) = u(z(s,t))
\end{equation}
as a $((W_0\circ u)_*j,J)$-holomorphic section of $W_0$ if $u$ was a $(j,J)$-holomorphic curve. 

\begin{lemma}\label{lem:max}
For all $u \in \mathcal{M}(p,q;[u],J)$, the image of $u$ is contained in a compact subset of $Y^0$.
\end{lemma}

We adapt the methodology of \cite[Proposition 3.10]{AA24}. 

\begin{proof}  By Equation \eqref{eq:make_section}, we may assume $u$ is a section over a bigon in the base of $W_0$, and we identify $(s,t) \in \bR \times [0,1]$ with $A(s,t) \in \mathbb{C}^*$. First, let's define the region we expect the curve to stay in within the base, because fibers are already compact. Enlarge $A$ to a larger annulus $\tilde A :=\{\tilde{R}_1 \leq  |w| \leq \tilde{R}_2\} \ni W_0(p), W_0(q)$ containing the points the projected strip limits to. Also let $(R_1,R_4) \subset (\tilde{R}_1,\tilde{R}_2)$ so outside of $\tilde A$, projected Lagrangians have constant angle and isotopies only rotate them by Equation \eqref{eq:Hamilt}. We'd like to show $u(\mathbb{R} \times [0,1]) $ is contained in the compact subset $\Omega:={W_0^{-1}(\tilde A)}$. We will do this by showing that maximums of $|W_0 \circ u|$ and $|\frac{1}{W_0} \circ u|$, if they exist, must be attained on the boundary. Then we show that any boundary maximum, if it exists, must lie within $\Omega$. If it doesn't exist, then a maximum is approached as we limit to $p$, $q$, and their images are also in $\Omega$. Therefore the image of $u$ is contained in $\Omega$. 

Recall that away from the singular fiber, $J|_{Y^0 \backslash \Omega}=J_0$ is multiplication by $i$ in the affine toric coordinates $t_1,t_2,t_3$ which are related to $y_1,y_2,y_3$ of Remark \ref{rem:toric_coords} by the process described in \cite[Equation (72)]{ACL21}. In particular, since $v_0 = t_3$ we have $W_0 = T^\epsilon(t_3-1)$ and so $W_0|_\Omega$ is $(J_0,i)$-holomorphic because $dW_0 \circ J_0 = T^\epsilon dt_3 \circ i = i \circ d(T^\epsilon(t_3-1))=i\circ dW_0$. Similarly $\frac{1}{W_0} : \mathbb{R} \times [0,1] \to \mathbb{C}^*$ is $(J_0,i)$ holomorphic by composing with the $i$-holomorphic map $w \mapsto 1/w$.

We project $u$ to the base and absorb the isotopy $\xi^\tau$ so it satisfies the elliptic Cauchy-Riemann operator. Let $\tilde \rho^t$ be the flow of $\xi^t$, which is equal pointwise to $W_0 \circ \rho^t \circ W_0^{-1}$ for any choice of $W_0^{-1}$. Then $\tilde u(s,t):=\tilde \rho^{\tau(s,t)} \circ W_0 \circ u(s,t)$ is a $(j,(\rho^\tau)_*j)$-holomorphic curve satisfying
\begin{equation}\label{eq:CR_mod}
(d\tilde u)_{(j,(\rho^\tau)_*j)}^{0,1}=   \dd_s \tilde u + (\rho^\tau)_* j \dd_t \tilde u=0
\end{equation}
where $j=i$ is the standard complex structure. Thus $\tilde u:\bR \times [0,1] \to \mathbb{C}^*$ satisfies the maximum principle for holomorphic functions between open subsets of $\mathbb{C}$. If it has a maximum it must be on the boundary $\bR \times \{0\} \cup \bR \times \{1\}$. 

Let $j_1:=j$ and $j_2:=(\rho^\tau)_* j$ so $\tilde u$ is $(j_1,j_2)$-holomorphic on domains in $\mathbb{C}$. In particular, it is conformal for the metrics $g_1,g_2$ compatible with $\omega_{\bC^*}$ defined by $g_k(-,-) = \omega_{\bC^*}(-,j_k(-))$ for $k=1,2$. Let $\tilde u :=r(s,t)e^{i \theta(s,t)}$ so $r(s,t) = r \circ \tilde u$ and $\theta(s,t) = \theta \circ \tilde u$. 

We need to check that the disc boundary doesn't stretch out arbitrarily far along the Lagrangians under $\tilde u$. We start by showing boundedness away from infinity. Suppose there's a maximum $r(s_0,t_0)$ on the boundary and outside of $\tilde A$, at some $(s_0,t_0)\in W_0(Y^0) \backslash \{|w| \leq \tilde{R}_2\}$ where $t_0$ is 0 or 1. Then 
\begin{equation}\label{eq:1st_deriv_test}
\dd_{s_0} (r\circ \tilde u)=0    
\end{equation} 
there. By the Hopf lemma, at a maximum, the derivative in the other direction $\dd_t$ moving outside the disc, must be positive. Because $e^{\pi (s+it)}$ is a biholomorphism from $\bR \times (0,1)$ to the interior of the upper-half plane which extends to mapping $\bR \times \{0\} \to \bR_+$ and $\bR \times \{1\} \to \bR_-$, we can take coordinates on the strip to be parametrized by points on the upper-half plane (UHP) and then $-\dd_{t^{UHP}}$ points outwards of the upper-half plane where $s^{UHP}+i t^{UHP}$ are the UHP coordinates. The derivative of $r$ on $\tilde u$ must point outwards at $s_0^{UHP}+i t_0^{UHP}:=e^{\pi (s_0+it_0)}$ in the direction of $-\dd_{t^{UHP}}$ so 
\begin{equation}\label{eq:r_inward}
\frac{\dd (r \circ \tilde u)}{\dd t^{UHP}}\mid_{(s_0,t_0)} <0.
\end{equation}
Therefore since $(\dd_s,\dd_t)$ is a positively-oriented $g_1$-orthogonal frame and $\tilde u$ is conformal then $(d\tilde u(\dd_s), d \tilde u(\dd_t))$ is a positively-oriented $g_2$-orthogonal frame and therefore 
\begin{equation}\label{eq:theta_CCW}
\frac{\dd (\theta \circ \tilde u)}{\dd s^{UHP}} \mid_{(s_0,t_0)} >0.
\end{equation}

In words, the image $\tilde u$ moves counterclockwise  in the base $\mathbb{C}^*$ as we move in the direction of increasing $s$ on the boundary of the upper-half plane in the domain. From the definition of $\tilde u$, this movement has boundary on a fixed Lagrangian, either $W_0(L_0^k)$ or $W_0(L_1^j)$. This Lagrangian has ends which have strictly increasing radial coordinate outside the disc $\{|w|\leq \tilde{R}_2\}$ by Definitions \ref{def:U_shape} and \ref{def:F2objs}. The tangent to the curve along the Lagrangian then must have a positive radial component. Equations \eqref{eq:r_inward} and \eqref{eq:theta_CCW} imply $d\tilde u(\dd/\dd s)$ along the boundary is rotated counterclockwise from the radial direction, and hence also counterclockwise from the Lagrangian tangent vector at the same point. But, $\tilde \rho^\tau$ moves the Lagrangian clockwise by Equation \eqref{eq:tau}. Since the two are going in opposite directions, this is a contradiction, and the curve $\tilde u$ could not achieve its maximum on the boundary outside $W_0^{-1}(\tilde A)$. For boundedness away from 0, we apply the above argument to $1/W_0$ on the inverted annulus $1/\tilde A$ and use the counterclockwise rotation indicated by Equation \eqref{eq:pos_wrap_full}.
\end{proof}

\begin{proof}[Proof of Lemma \ref{lem:mod_cpct}]

We exclude the possible types of bubbling described in Gromov compactness in Proposition \ref{prop:poss-configs}. We have the assumptions needed for Gromov compactness to hold by \cite[Proposition 3.13]{AA24} where their $X_H$ and $H$ terms are 0 in our setting since we don't have wrapping in the fibers. The assumptions of their lemma hold because we proved in Lemma \ref{lem:max} above that $[u]$ is contained in a compact subset which we labeled $\Omega$ to match their notation. 

We can exclude sphere bubbles by the dimension argument  \cite[Corollary 4.53]{Ca20}, because discs are somewhere injective by Lemma \ref{lem:inj}. As $W_0(L)$ is a U-shape or arc, it cannot bound a disc in the base. And in a fiber, a linear Lagrangian cannot bound discs so $[u]\in \pi_2(T^4,\ell_k)=0$ must be constant. Therefore there can be no disc bubbles. Strip-breaking, from reparametrization by translation, is excluded because the prescribed moving Lagrangian boundary condition prevents translation as an automorphism; the continuation maps count index 0 strips which limit to fixed Lagrangian boundaries. So the moduli space cannot limit to any configurations not already in it and therefore is compact.

This concludes the proof of Lemma \ref{lem:mod_cpct} that the moduli space $\mathcal{M}(p,q;[u],J)$ is compact.

\end{proof}

To justify the title of this subsection that continuation maps are inverses, we have the following lemma from \cite{AA24}, on cohomology. Composition in $\mu^2$ goes from right to left.

\begin{lemma}[{\cite[Proof of Lemma 3.23]{AA24}}]\label{lem:contn_inverse}
    For $p_k \in  HF^*(L_0^{k}, L_1^j)$ and $p_{k+1} \in  HF^*(L_0^{k+1}, L_1^j)$ 
    \begin{equation}\label{eq:commute}
    \begin{aligned}
    \mu^2(e_{L_1^j},p_{k+1}) &= F_{L_0^k,L_1^j}(\mu^2(p_{k+1},e_{L_0^k}))\\
\mu^2(F_{L_0^k,L_1^j}(p_k),e_{L_0^k})&= \mu^2(e_{L_1^{j}},p_k).
    \end{aligned}
    \end{equation}
    More specifically, taking $p_k$ and $p_{k+1}$ to represent elements in the quotients $\underset{j \to \infty}{\text{colim }} HF^*(L_0^{k}, L_1^j)=$ 
    
    $\coprod_j HF^*(L_0^{k}, L_1^j)/\sim$ and $\underset{j \to \infty}{\text{colim }} HF^*(L_0^{k+1}, L_1^j)=\coprod_j HF^*(L_0^{k+1}, L_1^j)/\sim $ respectively, the induced continuation maps on cohomology $F_{L_0^k,L_1^j}: HF(L_0^k, L_1^j)\to HF(L_0^{k+1}, L_1^{j+1})$ form an inverse to  
    $$
    \mu^2(-,e_{L_0}^k):\underset{j \to \infty}{ \text{colim }} HF^*(L_0^{k+1}, L_1^j)\to \underset{j \to \infty}{\text{colim }} HF^*(L_0^{k}, L_1^j).
    $$
\end{lemma}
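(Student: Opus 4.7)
The proof is a standard TQFT-style cobordism argument in wrapped Floer theory; the two identities are instances of the general principle that continuation maps commute with the $A_\infty$-structure maps up to homotopy, and both equations say that the homotopy vanishes on cohomology.

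The plan for the first identity $\mu^2(e_{L_1^j},p_{k+1}) = F_{L_0^k,L_1^j}(\mu^2(p_{k+1},e_{L_0^k}))$ is to construct a single parametrized moduli space of pseudo-holomorphic triangles (discs with three boundary punctures) with moving Lagrangian boundary conditions that interpolates between the two configurations. Label the three boundary segments with Lagrangian data attached to moving $\Lambda_s$ datum of Definition \ref{def:lambda}, and a perturbation term of the form \eqref{eq:perturb_term} extending $\tau$ from the boundary into the disc. Introduce a real parameter $\sigma \in [0,1]$ that shifts where the rotation from $\rho^{-\tilde\epsilon}$ is concentrated, so that at $\sigma=0$ the moving datum pictures a composition ``apply $\mu^2$ with $e_{L_0^k}$ then apply $F$,'' while at $\sigma=1$ it pictures ``first apply $F$ and then $\mu^2$ with $e_{L_1^j}$.'' The parametrized moduli space cut out by the corresponding Floer-type equation is then a $1$-dimensional manifold whose boundary consists precisely of the two counts appearing in the stated identity (plus terms that factor through the Floer differential and thus vanish on cohomology). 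The second identity is proved identically after swapping the roles of the two quasi-units.

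The analytic input required is precisely what was developed for Theorem \ref{thm:well_def}: transversality from the perturbation term, density of somewhere-injective points as in Lemma \ref{lem:inj}, the maximum principle of Lemma \ref{lem:max} applied uniformly in $\sigma$, and the exclusion of sphere bubbles (via somewhere-injectivity), disc bubbles (via $\pi_2(T^2,\rho^t(L))=0$), and strip-breaking in the interior of the parameter interval (via the fixed asymptotic constraints). The main obstacle is ensuring that the maximum-principle argument remains valid throughout the one-parameter interpolation: the key geometric input used in Lemma \ref{lem:max} was that the direction in which the isotopy $\tau$ rotates the projected Lagrangian ends is opposite to the angular direction forced on any boundary maximum of a holomorphic curve. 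One must check that the interpolating family of $\tau$'s preserves this sign throughout $\sigma \in [0,1]$, for both the outer wrapping given by \eqref{eq:pos_wrap_partial} and the full rotation around the puncture from \eqref{eq:pos_wrap_full}; this is arranged by keeping the rotation direction of $\Lambda_s$ monotone in $\sigma$.

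For the inversion statement on cohomology, the plan is to use the two commutation relations to identify $F_{L_0^k,L_1^j}$ with the inverse of $\mu^2(-,e_{L_0^k})$ in the colimit. By the homotopy colimit construction of Subsection \ref{sec:alg}, elements of $\mathrm{colim}_{j\to\infty} HF^*(L_0^{k},L_1^j)$ are equivalence classes under the relation generated by $p \sim \mu^2(e_{L_1^j}, p)$, and the transition $\mu^2(-,e_{L_0^k})\colon \mathrm{colim}_{j\to\infty} HF^*(L_0^{k+1},L_1^j) \to \mathrm{colim}_{j\to\infty} HF^*(L_0^{k},L_1^j)$ is well-defined on cohomology since $e_{L_0^k}$ is a closed element. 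The first identity of \eqref{eq:commute} then reads $\mu^2(-,e_{L_0^k})\circ F = \mathrm{id}$ modulo the colimit relation, and the second reads $F\circ \mu^2(-,e_{L_0^k}) = \mathrm{id}$ modulo the colimit relation; combined, $F$ is a two-sided inverse of $\mu^2(-,e_{L_0^k})$ on the colimit, completing the proof.
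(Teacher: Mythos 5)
Your proof follows essentially the same route as the paper, which simply invokes \cite[Lemma 3.19]{AA24} for the chain-level homotopy via parametrized moduli spaces (1-dimensional families whose boundary yields the two sides of \eqref{eq:commute} plus terms factoring through the differential) and notes that the argument transfers to the present setting because the maximum principle of Lemma \ref{lem:max} holds — precisely the two points you flesh out. The only slip is cosmetic: in your final paragraph the two identities of \eqref{eq:commute} are attached to the wrong compositions (the first gives $F\circ\mu^2(-,e_{L_0^k})=\mathrm{id}$ on $\mathrm{colim}_{j\to\infty} HF^*(L_0^{k+1},L_1^j)$ and the second gives $\mu^2(-,e_{L_0^k})\circ F=\mathrm{id}$ on $\mathrm{colim}_{j\to\infty} HF^*(L_0^{k},L_1^j)$), which does not affect the two-sided-inverse conclusion.
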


\begin{remark}\label{rem:param_moduli_1}
    The proof uses \cite[Lemma 3.19]{AA24} that Equation \eqref{eq:commute} holds up to homotopy on the chain-level by constructing \emph{parametrized moduli spaces} \cite[p. 241]{seidel}, \cite[Lemma 5.1]{Ca20} varying choices in 1 dimensional families and taking the boundary. It still applies in our setting because we have a maximum principle. 
\end{remark}

\subsubsection{Morphisms in the derived category}\label{sec:mors_defn}
As motivated at the start of this chapter we take the following

\begin{definition}[Definition of morphisms]
    Define 
\begin{equation}
\mc{Y}_{L^\infty}:= \underset{{k \to \infty} }{\text{hocolim}}\; \mc{Y}_{L^k}
\end{equation}
over the sequential diagram
\begin{equation}
\ldots \mc{Y}_{L^{k-1}} \xrightarrow[]{\mu^2(e_{L^{k-1}},-)} \mc{Y}_{L^{k}} \xrightarrow[]{\mu^2(e_{L^k},-)} \mc{Y}_{L^{k+1}} \to \ldots.
\end{equation}
Define morphisms
\begin{equation}\label{eq:define_mors}
    \mathcal{W}(L_0,L_1):= \text{Hom}_\mathcal{O}(\mc{Y}_{L_0^\infty},\mc{Y}_{L_1^\infty}).
\end{equation}
\end{definition}
So instead of Floer theory with $L$, we use $\mathcal{Y}_{L^\infty}$. Now we justify why this definition for morphisms means we can push the ends of the input Lagrangian up. The following isomorphism implies that after taking cohomology of the chain level construction of  $\mathcal{W}(L_0,L_1)$, we can take a sufficiently large wrapping of $k$ on the input Lagrangian, then take the usual Floer theory of Equation \eqref{eq:usu_HF}.
\begin{claim}[{\cite[Corollary 3.24]{AA24}}]
    For each integer $k \in \bZ$ and each pair $L_0^k$ and $L_1$ of objects of $\mathcal{O}$, there is a natural isomorphism
\begin{equation}\label{eq:we_can_wrap}
    \underset{j \to \infty}{\text{colim}} HF^*(L_0^k,L_1^j) =\coprod_{j \in \mathbb{N}} HF^*(L_0^k,L_1^j)/(p \sim \mu^2(e_{L_1^j},p)) \to H \mathcal{W}(L_0,L_1).
\end{equation}
\end{claim}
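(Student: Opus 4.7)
The approach is to unwind both homotopy colimits in $\mc{W}(L_0, L_1) = \Hom_{\mc{O}}(\mc{Y}_{L_0^\infty}, \mc{Y}_{L_1^\infty})$ via the telescope cone model of the hocolim recalled in Section \ref{sec:alg}, and then appeal to Lemma \ref{lem:contn_inverse} to show that the resulting tower in the input variable collapses to any single term. Throughout, I work at the level of cohomology, since the statement is about $H\mc{W}$.

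First, I would fix $k$ and compute $H\Hom_{\mc{O}}(\mc{Y}_{L_0^k}, \mc{Y}_{L_1^\infty})$. Because Yoneda modules are compact in the relevant $A_\infty$-sense, $\Hom_{\mc{O}}(\mc{Y}_{L_0^k}, -)$ commutes with the direct sums appearing in the telescope cone presentation of $\mc{Y}_{L_1^\infty}$. The Yoneda lemma identifies the terms as $\Hom_{\mc{O}}(\mc{Y}_{L_0^k}, \mc{Y}_{L_1^j}) = \mc{O}(L_0^k, L_1^j) = CF^*(L_0^k, L_1^j)$ once $j$ is in the directed range of \eqref{eq:dir_cat}. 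The resulting cone is then the standard telescope model for $\text{hocolim}_j CF^*(L_0^k, L_1^j)$; passing to cohomology, and using that filtered colimits of $\bC$-vector spaces are exact, produces precisely the left-hand side of \eqref{eq:we_can_wrap}.

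Next, I would absorb the homotopy colimit in the first variable. Applying $\Hom_{\mc{O}}(-, \mc{Y}_{L_1^\infty})$ to the analogous cone expression for $\mc{Y}_{L_0^\infty}$ turns that colimit into a homotopy limit, so on cohomology
$$
H\mc{W}(L_0, L_1) \;\cong\; \text{holim}_{k}\,\text{colim}_{j \to \infty}\,HF^*(L_0^k, L_1^j),
$$
with transition maps in the tower $\{\text{colim}_j HF^*(L_0^k, L_1^j)\}_k$ induced by precomposition with the quasi-units $\mu^2(-, e_{L_0^k})$. The decisive input is Lemma \ref{lem:contn_inverse}: these multiplications become invertible once the colimit over $j$ is taken, with explicit inverses furnished by the continuation maps $F_{L_0^k, L_1^j}$. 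A tower of isomorphisms is Mittag-Leffler with vanishing $\lim\nolimits^1$, so its homotopy limit agrees with any individual entry, and projection to the $k$-th entry is exactly the natural map asserted in \eqref{eq:we_can_wrap}.

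The main obstacle is not the homological algebra, which is formal once one accepts the $A_\infty$-localization framework of \cite{LO06, AS10, AA24} guaranteeing that the telescope cone computes the correct hocolim in the category $\mc{A}$ of $\mc{O}$-modules. The substantive geometric input has already been installed: Lemma \ref{lem:contn_inverse} rests on the maximum principle of Lemma \ref{lem:max}, the somewhere-injectivity argument of Lemma \ref{lem:inj}, and more generally on Theorem \ref{thm:well_def}. One bookkeeping point worth checking is that indexing the colimit by $j \in \mathbb{N}$ in the statement, rather than $j \in \bZ$, is harmless by cofinality, so that the inclusion of the $\mathbb{N}$-indexed telescope into the $\bZ$-indexed one is a quasi-isomorphism.
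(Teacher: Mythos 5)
Your proposal matches the paper's argument: the paper also deduces the claim from the telescope model of the homotopy colimit, the quasi-isomorphism $\text{holim}_{k}\,\text{hocolim}_{j}\,\mathcal{O}(L_0^k,L_1^j) \to \mathcal{W}(L_0,L_1)$ (its ``Result 1''), and the fact that the bonding maps of the resulting tower are quasi-isomorphisms via Lemma \ref{lem:contn_inverse} (its ``Result 2''), followed by projection to a single $k$. The extra details you supply (Yoneda identification, compactness commuting with direct sums, Mittag--Leffler collapse, cofinality of $\mathbb{N}$ in $\bZ$) are exactly the ``homological algebra properties'' the paper cites without spelling out, so the two arguments coincide.
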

The first step to justify this is true is to note that the following is a model for the homotopy colimit.
\begin{claim}[{\cite[Equation (3.48)]{AA24}}]
\begin{equation}\label{eq:hocolim_model}
\mathcal{Y}_{L^\infty}=\text{Cone}\left(\bigoplus_{k=0}^\infty \mathcal{Y}_{L^k} \xrightarrow[]{\oplus_{k=0}^\infty \mathbf{1}_{\mathcal{Y}_{L^k}} \oplus -\mu^2(e_{L^k},-)}\bigoplus_{k=0}^\infty \mathcal{Y}_{L^k}\right)
\end{equation}
\end{claim}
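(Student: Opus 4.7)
The plan is to recognize the right-hand side of \eqref{eq:hocolim_model} as the classical mapping-telescope model for the homotopy colimit of the sequential diagram
\[
\cdots \to \mathcal{Y}_{L^{k-1}} \xrightarrow{\mu^2(e_{L^{k-1}},-)} \mathcal{Y}_{L^k} \xrightarrow{\mu^2(e_{L^k},-)} \mathcal{Y}_{L^{k+1}} \to \cdots
\]
in the DG category of $\mathcal{O}$-modules, and then to verify the defining universal property of the hocolim.

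First I would write down explicit comparison maps. Let $T$ denote the cone on the right-hand side of \eqref{eq:hocolim_model}. The inclusion $\iota_k : \mathcal{Y}_{L^k} \hookrightarrow T$ into the $k$th summand of the target $\bigoplus \mathcal{Y}_{L^k}$ is a chain map, and the element in the $k$th summand of the shifted source $\bigoplus \mathcal{Y}_{L^k}[1]$ has cone-differential equal to $\iota_k - \iota_{k+1} \circ \mu^2(e_{L^k}, -)$, exhibiting the relation $\iota_{k+1} \circ \mu^2(e_{L^k},-) \simeq \iota_k$ up to an explicit chain homotopy. Hence the $\iota_k$ assemble into a morphism from the diagram $\{\mathcal{Y}_{L^k}\}$ to the constant diagram at $T$, and so factor through any other candidate model of $\mathrm{hocolim}\, \mathcal{Y}_{L^k}$ via a canonical comparison map.

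Second I would verify this comparison is a quasi-isomorphism by testing against arbitrary modules $M$. Applying $\mathrm{Hom}_{\mathcal{O}}(-,M)$ to the defining short exact sequence of the cone $T$ yields the Milnor short exact sequence relating $H^*\mathrm{Hom}_{\mathcal{O}}(T,M)$ to the inverse limit and first derived functor of the inverse system $H^*\mathrm{Hom}_{\mathcal{O}}(\mathcal{Y}_{L^k}, M)$, which is precisely the universal property characterizing the homotopy colimit. This is the classical telescope construction of \cite{BN93}, adapted to $A_\infty$-categories by \cite{LO06}.

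The delicate point is not conceptual but rather $A_\infty$-coherence: the connecting maps $\mu^2(e_{L^k},-)$ are only associative up to higher $A_\infty$-homotopies, so the differential on $T$ must be interpreted in the $A_\infty$-sense rather than as a strict chain map, and the telescope must be upgraded to an $A_\infty$-module. This is handled by the general framework of \cite{LO06}, which applies here because Lemma \ref{lem:contn_inverse} established that the $e_{L^k}$ are cohomological quasi-isomorphisms and therefore genuine weak equivalences in the localization.
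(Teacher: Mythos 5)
Your proposal is correct and takes essentially the same route as the paper, which simply invokes the standard mapping-telescope model (citing B\"okstedt--Neeman, Neeman, and Riehl) rather than spelling out the comparison maps $\iota_k$ and the $\lim$--$\lim^1$ argument as you do. Two minor caveats: the Milnor sequence is a consequence of, not a characterization equivalent to, the universal property of the homotopy colimit (the paper instead points to Riehl for the identification of the telescope with the left derived functor of colim), and the $A_\infty$-coherence issue you flag is not actually delicate for a sequential telescope, since the cone differential involves only the single adjacent connecting maps $\mu^2(e_{L^k},-)$ and never their iterated composites.
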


This is a standard model from topology called the mapping telescope, adapted to abelian categories as described in \cite[\textsection (3g)]{AS10}, \cite{BN93}. One can use it as the definition of the homotopy colimit of a sequential diagram as in \cite[Definition 1.6.4]{Ne01}. More recent homotopical algebra developments, for example \cite[Example 11.5.11]{Ri14}, explain why this agrees with the general definition of homotopy colimit as the left derived functor of the colimit constructed as the geometric realization of the cobar construction \cite[Corollary 5.1.3]{Ri14}. (A motivating example is the double mapping cylinder \cite[Example 6.4.5]{Ri14}.) 

To justify Equation \eqref{eq:we_can_wrap}, which is how we will compute morphisms in the next chapter, we see that it is a corollary of the following two results:
\begin{claim}[{\cite[Lemma 3.22]{AA24}}]\label{claim:q-iso_wrap_limit}
$$
\text{holim}_{k \to \infty} \text{hocolim}_{j \to \infty} \mathcal{O}(L_0^k,L_1^j) \to \mathcal{W}(L_0,L_1)
$$
is a quasi-isomorphism. 
\end{claim}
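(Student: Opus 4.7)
The plan is to derive this quasi-isomorphism formally from three standard facts about dg (or $A_\infty$) module categories: (i) the mapping telescope of Equation \eqref{eq:hocolim_model} represents $\mathcal{Y}_{L_0^\infty}$ as an h-projective/cofibrant $\mathcal{O}$-module model of $\mathrm{hocolim}_k \mathcal{Y}_{L_0^k}$, (ii) $\mathrm{Hom}_\mathcal{O}(-, M)$ sends homotopy colimits to homotopy limits when applied to h-projective arguments, and (iii) the derived Yoneda lemma $\mathrm{Hom}_\mathcal{O}(\mathcal{Y}_X, M) \simeq M(X)$.

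First I would apply $\mathrm{Hom}_\mathcal{O}(-, \mathcal{Y}_{L_1^\infty})$ to the explicit cone in Equation \eqref{eq:hocolim_model}. Because the source is a mapping telescope of h-projective modules, $\mathrm{Hom}$ dualizes the coproduct/cone into a product/cone of the form
\[
\mathrm{Cone}\!\left(\prod_{k \geq 0} \mathrm{Hom}_\mathcal{O}(\mathcal{Y}_{L_0^k}, \mathcal{Y}_{L_1^\infty}) \xrightarrow{\;\mathbf{1} - (\mu^2(e_{L_0^k},-))^{*}\;} \prod_{k \geq 0} \mathrm{Hom}_\mathcal{O}(\mathcal{Y}_{L_0^k}, \mathcal{Y}_{L_1^\infty})\right)[-1],
\]
which is the standard Milnor/telescope model of $\mathrm{holim}_k \mathrm{Hom}_\mathcal{O}(\mathcal{Y}_{L_0^k}, \mathcal{Y}_{L_1^\infty})$.

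Next, at each fixed $k$, I would apply the Yoneda lemma: for any $\mathcal{O}$-module $M$, $\mathrm{Hom}_\mathcal{O}(\mathcal{Y}_{L_0^k}, M) \simeq M(L_0^k)$. Taking $M = \mathcal{Y}_{L_1^\infty} = \mathrm{hocolim}_j \mathcal{Y}_{L_1^j}$ and using that evaluation at a fixed object is exact and hence commutes with hocolim gives $\mathrm{Hom}_\mathcal{O}(\mathcal{Y}_{L_0^k}, \mathcal{Y}_{L_1^\infty}) \simeq \mathrm{hocolim}_j \mathcal{O}(L_0^k, L_1^j)$. Substituting into the previous display yields
\[
\mathcal{W}(L_0, L_1) \;\simeq\; \mathrm{holim}_k \, \mathrm{hocolim}_j \, \mathcal{O}(L_0^k, L_1^j),
\]
which is the claim.

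The main obstacle is purely homotopical bookkeeping: making sure every step is done in a derived/homotopy-coherent sense within $A_\infty$-modules rather than only strictly. One must verify that the telescope cone is genuinely h-projective (so $\mathrm{Hom}$ out of it computes the right-derived functor), that the $A_\infty$-Yoneda embedding is a quasi-equivalence onto its image so the lemma is natural in both variables, and that the strict $\mathrm{Hom}$–$\mathrm{colim}$ adjunction descends to a derived $\mathrm{Hom}$–$\mathrm{hocolim}/\mathrm{holim}$ compatibility. All of these facts are standard in the framework of \cite{LO06}, \cite[\textsection (3g)]{AS10}, and \cite{BN93}, and the result is stated in our exact formulation as \cite[Lemma 3.22]{AA24}, so the argument ultimately consists of importing those general tools and verifying that our directed system $\{\mathcal{Y}_{L^k}\}$ of Yoneda modules satisfies the relevant hypotheses.
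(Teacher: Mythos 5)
Your proposal is correct and follows essentially the same route the paper takes (which itself defers to \cite[Lemma 3.22]{AA24}): start from the mapping telescope model of Equation \eqref{eq:hocolim_model}, dualize it under $\Hom_{\mathcal O}(-,\mathcal Y_{L_1^\infty})$ into the Milnor/holim model, and finish with the $A_\infty$-Yoneda lemma plus the fact that evaluation at $L_0^k$ commutes with the hocolim over $j$. The paper gives only the one-line sketch "starts with the mapping telescope model, and then relies on homological algebra properties which still apply in our setting," so your write-up is simply a fuller account of the same formal argument.
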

The proof starts with the mapping telescope model, and then relies on homological algebra properties which still apply in our setting. This leads to the following
\begin{lemma}[{\cite[Lemma 3.23]{AA24}}]\label{lem:mult_e_qiso}
    For all $L_0,L_1$, and $k$, the map
    $$
\Hom_\cO(\mc{Y}_{L_0^{k+1}}, \mc{Y}_{L_1^\infty}) \to \Hom_\cO(\mc{Y}_{L_0^{k}}, \mc{Y}_{L_1^\infty})
    $$
    induced by multiplication by quasi-units is a quasi-isomorphism.
\end{lemma}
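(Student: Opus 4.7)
The plan is to reduce the statement to a purely cohomological claim about colimits of Floer cohomology groups, and then invoke the inverse relationship between multiplication by quasi-units and continuation maps established in Lemma \ref{lem:contn_inverse}. First, by the Yoneda lemma for $\cO$-modules,
$$\Hom_\cO(\mc{Y}_{L_0^k}, \mc{Y}_{L_1^\infty}) \simeq \mc{Y}_{L_1^\infty}(L_0^k),$$
so the map we must analyze is
$$\mc{Y}_{L_1^\infty}(L_0^{k+1}) \to \mc{Y}_{L_1^\infty}(L_0^k), \qquad \text{induced by pre-composition with } \mu^2(e_{L_0^k},-).$$
Using the mapping telescope model of Equation \eqref{eq:hocolim_model}, $\mc{Y}_{L_1^\infty}(L_0^k)$ is the cone of $\bigoplus_{j\geq 0}\cO(L_0^k,L_1^j) \xrightarrow{\mathrm{id}-\mu^2(e_{L_1^j},-)} \bigoplus_{j\geq 0}\cO(L_0^k,L_1^j)$, and a standard spectral sequence / long exact sequence argument (equivalently, the identification of the cohomology of such a telescope with a sequential colimit) yields
$$H^*\bigl(\mc{Y}_{L_1^\infty}(L_0^k)\bigr) \cong \mathrm{colim}_{j \to \infty} HF^*(L_0^k,L_1^j),$$
with the transition maps being multiplication by the quasi-units $e_{L_1^j}$.

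Second, I would compare the two colimits. On cohomology, the lemma reduces to showing that the map
$$\mu^2(-, e_{L_0^k}) : \mathrm{colim}_{j \to \infty} HF^*(L_0^{k+1}, L_1^j) \longrightarrow \mathrm{colim}_{j \to \infty} HF^*(L_0^{k}, L_1^j)$$
is an isomorphism. This is exactly the content of Lemma \ref{lem:contn_inverse}: the continuation map $F_{L_0^k, L_1^j}$ descends to a well-defined map on colimits (using the first identity of \eqref{eq:commute} to check compatibility with the transition maps $\mu^2(e_{L_1^j},-)$), and the two identities of \eqref{eq:commute} precisely state that $F$ and $\mu^2(-,e_{L_0^k})$ are two-sided inverses after passing to the colimit.

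Third, to promote the cohomological statement to a chain-level quasi-isomorphism, I would upgrade the maps of telescopes to a morphism of filtered complexes. The multiplication by $e_{L_0^k}$ is a chain map of telescopes
$$\mathrm{Tel}\bigl(\cO(L_0^{k+1},L_1^\bullet)\bigr) \to \mathrm{Tel}\bigl(\cO(L_0^{k},L_1^\bullet)\bigr),$$
and the argument above shows it induces an isomorphism on $E_2$-pages of the filtration by the index $j$, hence is a quasi-isomorphism. Equivalently, one can argue directly that a map between mapping telescopes which is a quasi-isomorphism on colimits is itself a quasi-isomorphism, since the telescope commutes with taking $H^*$.

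The main obstacle, in my view, is verifying compatibility of the continuation maps with the telescope structure at the chain level — that is, promoting the cohomology-level identities of Lemma \ref{lem:contn_inverse} to coherent chain homotopies, which is what is needed to produce a well-defined map of telescope cones (not merely a map on cohomology) inducing the inverse. The cited \cite[Lemma 3.19]{AA24} provides these homotopies via parametrized moduli spaces of index $1$ strips interpolating between the two sides of \eqref{eq:commute}; the analytic inputs (regularity, compactness, exclusion of bubbling) are precisely those established in the proof of Theorem \ref{thm:well_def} for the continuation maps themselves, and so transfer to our setting without modification.
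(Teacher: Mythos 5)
Your proposal is correct and follows essentially the same route as the paper, which likewise deduces the statement from the telescope model of Equation \eqref{eq:hocolim_model}, the identification of its cohomology with $\mathrm{colim}_{j}HF^*(L_0^k,L_1^j)$, and the inverse supplied by the continuation maps via Lemma \ref{lem:contn_inverse} (i.e.\ Equation \eqref{eq:commute}), with the only setting-specific input being the maximum principle underlying the parametrized moduli spaces. One small simplification: since the map of telescopes is already defined at chain level by pre-composition with $\mu^2(e_{L_0^k},-)$, you do not need to promote the continuation maps to a chain-level inverse of cones — an isomorphism on cohomology of the two telescopes already certifies a quasi-isomorphism.
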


The proof follows again from homological algebra, the definition of $\mathcal{O}$, and Equation \eqref{eq:commute}. These all still apply in our setting. Since the bonding maps in the holim of Claim \ref{claim:q-iso_wrap_limit} are isomorphisms on cohomology by Lemma \ref{lem:mult_e_qiso}, taking a projection we find that for any $k$, Equation \eqref{eq:we_can_wrap} holds.

\subsection{Lagrangian gradings}\label{sec:grading} Before we compute the morphisms we need to discuss Lagrangian gradings. Here we assume $j>i+1$ so $CF^*(\ell_i,\ell_j)$ and $CF^*(\ell_{i+1},\ell_j)$ are supported in degree 0 by \cite[Example 4.6]{ACLL_Bfield}, where $\ell_k$ is linear of slope $k$ and the Lagrangian grading is $\frac{1}{\pi}\arctan(k)$. The calculations for the morphisms in Subsection \ref{sec:calcs} can be extended to other $i,j$ using Serre duality on the A- and B-side \cite[Lemma 4.7]{ACLL_Bfield}, done in the proof of Corollary \ref{thm:factor1}. 

We start with the standard grading in $\mathbb{C}$. The typical gradings in a bigon between curves $l_1,l_2$ in $\bR^2$ have degree 0 on the right intersection point and degree 1 on the left intersection point, as in Figure \ref{fig:usu_bigon} and \cite[Example 1.9]{fuk_intro}. On a surface, a grading on a Lagrangian $l_j$ is a continuous choice of angle $\alpha_j(t)$ along the Lagrangian for the tangent vector field $\dot l_j(t)$, relative a line field. Then intersection points are graded as follows.

\begin{figure}[h]
    \centering
\begin{tikzpicture}[scale=1.2]
  \tikzset{
    interarrow/.style={
      decoration={markings,
                  mark=at position #1 with {\arrow{>}}},
      postaction={decorate}
    }
  }
\begin{scope}
\draw[thick, black, -] (-0.3,-0.2) to[out=55,in=125] (2.3,-0.2);
\draw[thick, red, -] (-0.3,0.2) to[out=-55,in=-125] (2.3,0.2);

  \draw[black, ->, thick] (-0.1,0) -- ++(0.5,0.5);   
  \draw[black, ->, thick] (2.12,0) -- ++(0.5,-0.5);   
  \draw[red, ->, thick] (-0.1,0) -- ++(0.5,-0.5); 
  \draw[red, ->, thick] (2.12,0) -- ++(0.5,0.5); 

  \node[black] at (1,.8) {$l_2$};
  \node[red] at (1,-.8) {$l_1$};

  \filldraw[magenta] (-.1,0) circle (1pt);
  \node[magenta, left] at (-.2,0) {\text{deg = 1}};
    \filldraw[magenta] (2.12,0) circle (1pt);

\end{scope}

  \node[above right] at (-1.3,.5)
    {$\alpha_2(0)=\tfrac18$};
  \node[below right] at (-1.3,-.5)
    {$\alpha_1(0)=-\tfrac18$};

  \begin{scope}[xshift=4.5cm]
    \node[draw, rectangle] at (1,2) {$t=0$};

    \node at (.9,1.2)
      {Step 1: $\displaystyle \alpha_2(0)-\alpha_1(0)=\frac{1}{4}$};
\node at (1,0.4) {Step 2: rotate 
  \raisebox{-0.5ex}{\tikz{\draw[thick, black] (0,0) -- (0.3,0.3);}} 
  by $\tfrac14$ to 
  \raisebox{-0.5ex}{\tikz{\draw[thick, red] (0,0) -- (-0.3,0.3);}}};
    \node at (1.2,-0.4)
      {Step 3: $\displaystyle 2\times\bigl(\tfrac{1}{4}+\tfrac{1}{4}\bigr) =1=\text{deg}$};
  \end{scope}
\end{tikzpicture}
\caption{Angles depicted as fractions of $2\pi$ relative the positive horizontal line field}
\label{fig:usu_bigon}
\end{figure}

\begin{itemize}
    \item[Step 1:] Find $\alpha_2(t)-\alpha_1(t)$ at values of $t$ where the paths intersect (direction of the arrows on the base Lagrangians matter). 
    \item[Step 2:] At that same value of $t$, we rotate the line spanned by $\dot l_2(t)$ counterclockwise until it coincides with that spanned by $\dot l_1(t)$ (direction of the arrows on the base Lagrangians don't matter). Since counterclockwise is always positive, we add that amount of rotation to $\alpha_2(t)-\alpha_1(t)$. This is referred to as the reverse canonical short path. \item[Step 3:] Twice this winding number is the degree, because rotating a line back to itself, through an angle $\pi$, should count as one rotation. 
\end{itemize}

In an LG model, fiber and base contributions add, both for Lagrangian gradings and degrees of intersection points, by \cite[Lemma 3.11 and Corollary 3.12]{ACLL_LG}. To grade Lagrangians, a choice of holomorphic volume form $\Omega$ on $Y^0$ must be made, so locally 
\begin{equation}
    \Omega = a(z_1,z_2,z_3)dz_1 \wedge dz_2 \wedge dz_3
\end{equation}
for local holomorphic coordinates $z_1,z_2,z_3$ and non-vanishing $a(z_1,z_2,z_3)$. From there, we define a quadratic complex volume form $\Theta:=(\Omega/|\Omega|)^{\otimes 2}$ and this then defines a squared phase map $\alpha_{\Theta}: LGr(TY^0,\omega)\to U(1)$ by 
\begin{equation}\label{eqn:alpha}
\alpha_{\Theta}(p, V) = \frac{\Omega(e_1,e_2, e_{3})^2}{ |\Omega(e_1,e_2, e_{3})|^2}
\end{equation}
where $p\in Y^0$, $V=T_pL\cong \bR^{3}$ is a linear Lagrangian subspace of $(T_pY^0, \omega(p))$, and $(e_1,e_2,e_{3})$ is an ordered $\bR$-basis of $V$. 

Taking $e_1,e_2$ to span the tangent space of the fiber Lagrangian $\ell_k$, we find its squared phase function is $e^{-2\pi i \cdot 2\phi_k}$ where $\phi_k = \frac{1}{\pi}\arctan(k)$. Its squared phase map is the square of the determinant in coordinates $\left( \begin{matrix} r_1 \\ r_2 \end{matrix}\right)=\left( \begin{matrix} 2 & 1 \\ 1 & 2 \end{matrix} \right)^{-1}\left( \begin{matrix} \xi_1\\ \xi_2 \end{matrix} \right)$, \cite[Example 4.6]{ACLL_Bfield}. On the base take $e_3=\frac{\dot \gamma_{L}(t)}{|\dot \gamma_{L}(t)|}$ to be the unit tangent to the projected curve in the base. The choice of $\Omega$ in the base is then equivalent to a choice of line field with respect to which we measure angles of the tangent vector field $\dot \gamma_L$ from, as mentioned above for surfaces. We interpolate between two models, a positive horizontal line field away from 0 in $\bC^*$, and an angular line field around 0 in $\bC^*$ (positive horizontal line field in the cylinder).

\begin{definition}[Definition of Lagrangian grading]\label{def:grading}
Fiber: the relative squared phase map is
$$
\alpha_{\Theta^{\text{Vert}}}: LGr=T^{4}\times U(2)/O(2) \mapsto U(1), \qquad (x,aO(2)) \mapsto \det(a)^2.
$$

Base: For $|w|>R_2$, $L=L_j$:    Note that $(\exp(2\pi i \alpha_j(t)))^2=\alpha_L^{\text{Hor}}(p)$ the horizontal squared phase function of $L_j$ of \cite[Lemma 3.11, Eq.~(3.8)]{ACLL_LG}. For horizontal lift $(\dot \gamma_{L_j}(t))^\#$ and recalling $r(t)$ and $\theta(t)$ parametrize $L_j$ as in Definition \ref{def:U_shape}, define
\begin{equation}
    \alpha_L^{\text{Hor}}(p) = \frac{(dW_0(\dot \gamma_{L_j}(t)^\#))^2}{|dW_0(\dot \gamma_{L_j}(t)^\#)|^2}= \frac{(\dot \gamma_{L_j}(t))^2}{|\dot \gamma_{L_j}(t)|^2} = \frac{(\dot r(t) + r(t) 2\pi \mathbf{i} \dot \theta(t))^2 e^{2\pi \mathbf{i}\cdot 2 \theta(t)}}{|\dot r(t) + r(t) 2\pi \mathbf{i} \dot \theta(t)|^2}=e^{2\pi \mathbf{i}\cdot  2\alpha_j(t)}.
\end{equation}
For the part of the U-shape around the critical value we take a circle $\dot{r}(t)=0$, $\dot \theta (t)=1$, then
$$
-e^{2\pi \mathbf{i}\cdot 2\theta(t)}=e^{2\pi \mathbf{i}\cdot  2\alpha_j(t)} 
$$
so we can take the choice for $\alpha_j(t)$ to be
\begin{equation}
\alpha_j(t) = \theta(t) + 1/4.
\end{equation}
In other words, $\alpha_j(t)$ is the angle that the tangent vector $\dot \gamma_{L_j}(t)$ makes with respect to the horizontal line field. So we recover the fact that the tangent to the circle is rotated $90$ degrees from the radial vector, when measured with respect to the positive horizontal vector field. In particular, the phase function is $-1/4$ at $b_S$ by Equation \eqref{eq:U-shape_base}. For the ends of the U-shape we have $r(t) = R_4'|t|$ for constant of proportionality $R_4'$ in Equation \eqref{eq:U-shape_base}, $\dot \theta(t)=0$, and $\theta(t) = \theta_h(L)$ so 
$$
e^{2\pi \mathbf{i}\cdot 2\theta_h(L)}=e^{2\pi \mathbf{i}\cdot  2\alpha_j(t)} 
$$
and we can take the choice for $\alpha_j(t)$ to be the choice of angle in $(-\frac12, \frac12)$
\begin{equation}
\alpha_j(t) = \begin{cases}
    \theta_h(L)- \lfloor \theta_h(L) \rfloor & \text{ if } <\frac12\\
    \theta_h(L)- \lfloor \theta_h(L) \rfloor -1 & \text{ if } >\frac12
\end{cases}.
\end{equation}
This is again the angle the tangent vector $\dot \gamma_{L_j}(t)$ makes with respect to the horizontal line field.

For $|w|>R_2$, $L=K_j$:  For mirror symmetry reasons to be explained below, we take negative the above choice of horizontal squared phase function  
\begin{equation}
    \alpha_K^{\text{Hor}}(p) = -\alpha_L^{\text{Hor}}(p)
\end{equation}
so the phase function is shifted by $-1/2$
\begin{equation}\label{eq:K_ends}
    \alpha_j(t) = \begin{cases}
    \theta_h(L)- \lfloor \theta_h(L) \rfloor - \frac12 & \text{ if } <\frac12\\
    \theta_h(L)- \lfloor \theta_h(L) \rfloor -\frac32 & \text{ if } >\frac12
\end{cases}.
\end{equation}

For $|w| < R_1$, $L=K_j$: Note that only $W_0(K_j)$ passes through this region, and not $W_0(L_j)$. Here we take
\begin{equation}
    \alpha_K^{\text{Hor}}(p) = \frac{(d\log W_0(\dot \gamma_{K_j}(t)^\#))^2}{|d \log W_0(\dot \gamma_{K_j}(t)^\#)|^2} = \frac{(\dot \gamma_{K_j}(t))^2}{|\dot \gamma_{K_j}(t)|^2} \bigg / \frac{\gamma_{K_j}(t)^2}{|\gamma_{K_j}(t)|^2}.
\end{equation}
Without wrapping, $W_0(K_j)$ can be parametrized by $\gamma_{K_j}(t) = te^{2\pi \mathbf{i} \theta_h(K_j)}$ and the standard convention would be $\alpha_j(t) = 0$. We instead take $\alpha_j(t)=-1$ to be consistent with $|w|>R_2$. With wrapping, by Remark \ref{rem:wrap}, instead of a colimit we can consider a quadratically-wrapped Lagrangian around 0 with $H(\mu,\theta) = \frac{1}{2}\mu^2$. Then under the time-1 flow of the corresponding Hamiltonian vector field $\mu \frac{\dd}{\dd \theta}$, we find that near 0, $W_0(K_j)$ wraps. The new curve is $\{(\mu, \theta(\mu)) \mid \theta(\mu)= \theta_h(K_j)+\mu,\;  \mu \in \mathbb{R}_+\}$  in action-angle coordinates. In coordinate $w$ on $\mathbb{C}^*$, this can be parametrized as $\gamma(t) = te^{ \mathbf{i} (2\pi\theta_h(K_j) + \log t)}$, by taking $t=r$ and using that the action coordinate $\mu = \frac{1}{2\pi} \log r$. Thus 
\begin{equation}\label{eq:perturb_K_grading}
\begin{aligned}
    \alpha_K^{\text{Hor}}(p) &= \frac{(\dot \gamma_{K_j}(t))^2}{|\dot \gamma_{K_j}(t)|^2} \bigg / \frac{\gamma_{K_j}(t)^2}{|\gamma_{K_j}(t)|^2} = \frac{(1 +  \mathbf{i} )^2 e^{\mathbf{i}\cdot 2 (2\pi\theta_h(K_j) + \log t)}}{|1 +   \mathbf{i}|^2}\bigg / e^{ \mathbf{i}\cdot 2 (2\pi\theta_h(K_j) +  \log t)}\\
    &=\mathbf{i}=e^{2\pi \mathbf{i}\cdot  2\alpha_j(t)}.
    \end{aligned}
\end{equation}
So one choice is $\alpha_j(t) = 1/8$ but we again shift by $-1$ to be consistent so take $\alpha_j(t) = 1/8-1 = -7/8$. In words, this measures that the universal cover of a curve wrapping around the 0 end of the cylinder is a curve of slope 1 that makes an angle $\pi/4$ with the horizontal line field in $(\mu,\theta)$ coordinates. In the complex plane in the base, it wraps around 0.

For $R_1<|w|<R_2$, $L=K_j$: Again only $W_0(K_j)$ passes through this region. We interpolate between the two choices for $\alpha_K^{\text{Hor}}$. Visually, interpolating between the two different choices of horizontal holomorphic one form, $dw$ and $d \log w$, is as follows. Draw a line field on the base, with respect to which tangent vectors of the projected Lagrangians are measured. Outside of the disc of radius $R_2$ it is the horizontal line field and inside the disc of radius $R_1$ it spirals around the origin. In between, rotate the horizontal lines to the angular ones.
\end{definition}

Summary of the base gradings: The Lagrangian U-shapes in the base $W_0(L_j)$ are all graded the same, with phase $-1/4$ at $b_S$. The grading on $W_0(K_j)$ is shifted by $-1$ from that of the ends of the U-shapes because the phase is shifted by $-1/2$. This is done to match morphisms on the B-side in the non-orthogonal direction, see Equation \eqref{eq:deg_shft_homs}.

\subsection{Calculations of morphisms}\label{sec:calcs}
Now we calculate the morphisms $HF(\mathbf{L}_{i}, \mathbf{L}_{j})$ on the A-side where $\mathbf{L}$ is $L$ or $K$. We start with the differential between two U-shapes $L_{i}$ and $L_{j}$, which involves counting bigons over the one depicted in Figure \ref{fig:b1} from $b_1$ to $b_0=b_S$. Note the difference from Figure \ref{fig:usu_bigon}; bigons between U-shapes instead go from degree $-1$ to degree 0 in the base. In a fiber, by \cite[\textsection 4.2, Equation (4.21)]{ACLL_tori} with $a_i=b_i=0$, $g=2$,
\begin{equation}\label{eq:mors_Aside_fiber}
    HF^d(\ell_i,\ell_j) = \begin{cases}
        \delta_{d0} \cdot \mathbb{C}^{(i-j)^2} & i < j\\
        \mathbb{C}^{2 \choose d} & i = j\\
        \delta_{d2} \cdot \mathbb{C}^{(i-j)^2} & i > j
    \end{cases}
\end{equation}
where $\delta$-function $\delta_{d0}$  is 0 unless $d=0$ in which case it is 1, and similarly for $\delta_{d2}$.
\begin{example}
    $CF^{d}(\ell_{i+1}, \ell_{i})=HF^{d}(\ell_{i+1}, \ell_{i})$ only has intersection points in degree $d=2$ because the input Lagrangian has larger slope than the output Lagrangian \cite[Example 4.6]{ACLL_Bfield}. 
\end{example} 

Then because under monodromy $\Phi(\ell_i)$ is Hamiltonian isotopic fo $\ell_{i+1}$ \cite[Lemma 4.24]{Ca20}, $HF^d(L_i,L_j)$ is defined by the cohomology in degree $d$ of
  \begin{equation}\label{eq:generalHFses}
  \begin{aligned}
         & CF^d_{b_1}({\ell}_{i+1},{\ell}_{j})[-1] \oplus CF^{d-1}_{b_0}({\ell}_{i},{\ell}_{j})\xrightarrow[]{\dd^{d-1}} CF^{d+1}_{b_1}({\ell}_{i+1},{\ell}_{j})[-1]\oplus CF^d_{b_0}(\ell_{i}, \ell_{j}) \\
         & \xrightarrow[]{\dd^d} CF^{d+2}_{b_1}({\ell}_{i+1},{\ell}_{j})[-1]\oplus CF^{d+1}_{b_0}(\ell_{i}, \ell_{j}).
         \end{aligned}
    \end{equation}
$CF^d_{b_1}({\ell}_{i},{\ell}_{j})[-1]$ means we take the degree $d$ intersection points in the fiber over $b_1$ and degree $-1$ in the base. All differentials within a fiber 
\begin{equation}\label{eq:fiberdiffl_zero}
    \dd: CF^d_{b}(\ell_i,\ell_j) \xrightarrow[]{0} CF^{d+1}_b(\ell_i,\ell_j)
\end{equation} 
are zero. There are no bigons between two linear Lagrangians in the universal cover for $i \neq j$, and when $i=j$ we use \cite[Equation (4.47)]{ACLL_tori} for local systems $a_1=a_2=0$ here. Furthermore, given the location of the Lagrangians, there can be no bigons from $b_0$ to $b_1$ as they must be orientation preserving. So
\begin{equation}\label{eq:no-opp-bigon}
    \dd: CF_{b_0}(\ell_i,\ell_j) \xrightarrow[]{0} CF_{b_1}(\ell_{i'},\ell_{j'}).
\end{equation}

As we will see below in the Proof of Corollary \ref{thm:factor1}, the possible short exact sequences computing $HF(L_i,L_j)$ are therefore the following. Each $\dd$ counts bigons over the base bigon from ${b}_1$ to ${b}_0$.
\begin{equation}\label{eq:poss_ses}
\begin{aligned}
0  \to CF^0_{{b_1}}(\ell_{i+1},\ell_{j})[-1] & \xrightarrow[]{\partial} CF^0_{{b_0}}(\ell_{i},\ell_{j}) \to  HF^0({L}_{i},{L}_{j})\to 0, \qquad & j > i+1\\
            0 & \to CF^{d+1}_{{b}_1}({\ell}_{i+1},{\ell}_{i+1})[-1] = HF^d({L}_i,{L}_{i+1})\to 0, \qquad & d = 0,1\\
            0 & \to CF^d_{{b}_0}(\ell_{i}, \ell_{i}) = HF^d({L}_i,{L}_{i})\to 0, \qquad & d = 0,1\\
            0  \to CF^2_{{b_1}}(\ell_{i+1},\ell_{j})[-1] & \xrightarrow[]{\partial} CF^2_{{b_0}}(\ell_{i},\ell_{j}) \to  HF^2({L}_{i},{L}_{j})\to 0, \qquad & j < i
         \end{aligned}
    \end{equation}

It remains to calculate the differential in the first line in Equation \eqref{eq:poss_ses} (the differential in the last line is the same). We assume $j>i+1$.

\begin{lemma}[Morphisms on $\mathcal{A}_L$]\label{lem: F1_mors}
The Floer theory on the U-shaped Lagrangian objects of $\mathcal{A}_L$ is isomorphic to the Floer theory of the U-shaped Lagrangians of $(Y,-v_0)$ in \cite[Lemma 5.14]{Ca20}. That is,
$$
HF^*(L_i, L_j) =\bC^{(i-j)^2} / C \dd  (\bC^{(i+1-j)^2})
$$
where $\dd$ is the differential between Floer complexes $CF(\ell_{i+1},\ell_j)[-1] \cong \bC^{(i+1-j)^2} \to CF(\ell_i,\ell_j) \cong \bC^{(i-j)^2}$ and $C$ is a scalar function of $(x_1,x_2,y)$ defined by an open Gromov-Witten invariant from \cite{KL15}. 
\end{lemma}

In particular \cite{Ca20} shows that $\dd$, as a linear map, is proportional to the linear map $\vartheta \otimes -$ on $\Ext(\mathcal{L}^{i+1}, \mathcal{L}^j) \to \Ext(\mathcal{L}^{i}, \mathcal{L}^j)$, for $\vartheta$ the defining theta function of $H=\Sigma_2$, thus proving homological mirror symmetry of morphisms that $\Ext^*(\cL^{i}|_H, \cL^{j}|_H) = HF^*(L_{i},L_{j})$.

\begin{proof}

\begin{figure}[h]
\centering
\begin{tikzpicture}[scale=2]

  \node[draw, rectangle] at (0,3.3) {$b_0$ at $t = t_0$};
\coordinate (vOrigin) at (-.3,2.9);
\draw[->, thick, black] (vOrigin) -- ++(-.3,-0.3);         
\draw[->, thick, red] (vOrigin) -- ++(0.28,-0.28);      

\node[align=left] at (1,2.6) {
 \textcolor{red}{$\alpha_1 (t_0) = -\frac{1}{8}$} \\
 \textcolor{black}{$\alpha_2 (t_0) = -\frac{3}{8}$} \\
  deg$(b_0) = 0$
};

  \node[align=left] at (3.3,2.6) {
    Step 1: $\alpha_2(t_0) - \alpha_1(t_0) = -\frac{1}{4}$ \\
{Step 2:  
  \raisebox{-0.5ex}{\tikz{\draw[thick, black] (0,0) -- (.3,0.3);}} 
  + $\tfrac{1}{4}$ = 
  \raisebox{-0.5ex}{\tikz{\draw[thick, red] (0,0) -- (-0.3,0.3);}}}\\
  Step 3: $\displaystyle 2 \times (-\tfrac{1}{4} + \tfrac{1}{4}) = 0$
  };

  \draw[-] (-0.5,2.1) -- (4.2,2.1);

\node[draw, rectangle] at (0,1.8) {$b_1$ at $t = t_1$};

\coordinate (vOrigin2) at (-.2,1.3);
\draw[->, thick, black] (vOrigin2) -- ++(-.3,.3);        
\draw[->, thick, red] (vOrigin2) -- ++(0.28,.28);     

\node[align=left] at (1.1,1.2) {
  \textcolor{red}{$ \alpha_1 (t_1) = -\frac{1}{8} + \frac{1}{4}^*$} \\
  \textcolor{black}{$\alpha_2 (t_1) = -\frac{5}{8}$}\\
  deg$(b_1) = -1$
};

  \node[align=left] at (3.3,1.2) {
    Step 1: $\alpha_2(t_1) - \alpha_1(t_1) = -\frac{3}{4}$ \\
{Step 2:  
  \raisebox{-0ex}{\tikz{\draw[thick, black] (0,0) -- (0.3,-.3);}} + $\tfrac{1}{4}$ to 
  \raisebox{-0.5ex}{\tikz{\draw[thick, red] (0,0) -- (-0.3,-0.3);}}}\\
  Step 3: $\displaystyle 2 \times (-\tfrac{3}{4} + \tfrac{1}{4}) = -1$
  };

\begin{scope}[yshift=3pt, scale=0.8]
{\small  

\begin{scope}[yshift=-2pt, xshift=12pt]
\draw[thick, black, -] (-0.36,0.12) to[out=55,in=125] (2.36,0.12);

\draw[black, ->, thick] (-0.36,0.12) -- ++(-0.2,-0.28);   
\draw[black, ->, thick] (2.36,0.12) -- ++(-0.2,0.28);

  \node[black] at (1,.4) {$W_0(L_2)$};
  \node[red] at (1,-1) {$W_0(L_1)$};

  \node[left] at (-.6,0) {\text{deg$(b_0)$ = 0}};
\node[right] at (2.4,0)
{\text{deg$(b_1)=-1$}};

  \node[above right] at (-1.4,.3)
    {$\alpha_2(t_0)=-\tfrac{3}{8}$};
  \node[below right, red] at (-1.3,-.5)
    {$\alpha_1(t_0)=-\tfrac18$};

\end{scope}
    
  \draw[thick, red] (0,0) arc[start angle=205, end angle=340, radius=1.5cm];
\begin{scope}[yshift=-2pt]
\draw[->, thick, red] (0,0.1) -- ++(.1,-0.3) node[below] {$@t_0$};    
\end{scope}

\draw[->, thick, red] (0.51,-.6) -- ++(0.2,-0.2) ;

\draw[->, thick, red] (1.8,-.8) -- ++(0.36,0.1);
\begin{scope}[xshift=3pt,yshift=8pt]
\draw[->, thick, red] (2.6,-.3) -- ++(0.24,0.36) node[right] {$@t_1$};
\end{scope}

\begin{scope}[xshift=50pt, yshift=16pt]
  \draw[->, thick] (3.3,-0.7) arc[start angle=-45, end angle=40, radius=0.14];
  \node at (4,-0.6) {\small $+\frac{\pi}{2}$ rotation};
\draw[->, thick, red] (3,-0.5) -- ++(.1,-0.3) node[below] {$@t_0$};
\draw[->, thick, red] (3,-0.5) -- ++(0.24,0.2) node[right] {$@t_1$};
\end{scope}
}
\end{scope}
\end{tikzpicture}

        \caption{The differential on $CF(L_i,L_j)$ counts curves covering this bigon depicted in the base, from point $b_1$ of degree $-1$ to point $b_0$ of degree 0. ${}^*$: Add $\tfrac14 \cdot 2\pi$ as the oriented tangent vector rotates counterclockwise $\frac{\pi}{2}$ from $b_0$ to $b_1$}
    \label{fig:b1}
\end{figure}

The differential on $CF(L_i,L_j)$ counts curves covering the bigon depicted in the base in Figure \ref{fig:b1}. Note that U-shaped Lagrangians stay outside the $R_2$ disc by Definition \ref{def:U_shape}, so they do not interact with the puncture at 0. The moduli space in Equation \eqref{eq:diff_mod_space} admits a regular $J$ as follows. There exists a dense set of regular $J$ on $Y$ following the roadmap from \cite{jholbk} outlined in \cite[\textsection 4.5, Lemma 4.47]{Ca20} for the moduli space
    \begin{equation}\label{eq:diffl_thesis}
\cM(p, q;\beta,J):=\left\{u: \bR \times [0,1] \to Y \middle\vert \begin{array}{l}[u] = \beta, \ \overline\partial_J(u)=0,\ 
 E(u)<\infty,\\ u(\bR,0) \subset L_i, \ u(\bR,1) \subset L_j, \\u(-\infty) = q,\  u(+\infty) = p
\end{array}\right\}/(u \sim u_a).
\end{equation} 
Then we restrict $J$ to $Y^0$ and note that the moduli space of Equation \eqref{eq:diffl_thesis} is isomorphic by a 1-1 bijection with the moduli space counted in the differential here 
\begin{equation}
\cM(p, q;\beta,J|_{Y^0}):=\left\{u: \bR \times [0,1] \to Y^0 \middle\vert \begin{array}{l}[u] = \beta, \ \overline\partial_{J|_{Y^0}}(u)=0,\ 
 E(u)<\infty,\\ u(\bR,0) \subset L_i, \ u(\bR,1) \subset L_j, \\u(-\infty) = q,\  u(+\infty) = p
\end{array}\right\}/(u \sim u_a).
\end{equation}
Then we carry out the same calculation of the differential as in \cite[Lemma 5.14]{Ca20}.

    Because we are in the non-exact setting, changes in area affect calculations. In particular, homotoping $\gamma_{L_j}$ and $\delta_{K_j}$ (staying away from the critical values) to different choices then isotopes the Lagrangians above them by parallel transport and affects disc area calculations in the Fukaya category according to the formula \cite[Proposition 6.10]{ACLL_Bfield}. However, in a symplectic Landau-Ginzburg model, the calculations split into fiber and base contributions because the Lagrangian isotopies are exact \cite[Lemma 2.1]{ACLL_LG}. Therefore, different choices can be absorbed into $C$ by \cite[Corollary 2.4]{ACLL_LG}.
\end{proof}

For morphisms from $\mathcal{A}_L$ to $\mathcal{A}_K$, recall that we perturb the ends of the input projected Lagrangian to lie above those of the output, Equation \eqref{eq:we_can_wrap}. After  wrapping, set the notation that ${L_i} \cap {K_j}$ intersect in two points $w_0,w_1$ over points $b_0,b_1 \in \gamma_{L_i} \cap \delta_{K_j}$ in the base as depicted in Figure \ref{fig:ham_inv_Floer} which are 
$$
b_0=W_0(w_0):=\gamma_{L_i}(t_0) = \delta_{K_j}(s_0), \quad b_1=W_0(w_1):=\gamma_{L_i}(t_1) = \delta_{K_j}(s_1).
$$
The differential counts bigons bounded by Lagrangians $L_i \cup K_j$ that cover the bigon in the base between these two paths and intersection points. Hamiltonian-perturb the Lagrangians via Hamiltonian $\tilde H$, so that the boundary of the bigon in the base, $W_0(\phi_{\tilde H}(L_i \cup K_j))$, becomes a circle centered at $-T^\epsilon$ described by 
$$
\zeta: S^1 \to \bC^*.
$$
Parametrize $\zeta$ so that it starts at $\zeta(1)=W_0(\phi_{\tilde H}(w_0))$. Setting the notation $Y^0_b:={W_0^{-1}(b)}$ to denote the fiber over $b$, define 
$$
\Phi^s: Y^0_{b_0} \to Y^0_{\zeta(1)} \to Y^0_{\zeta(e^{2\pi i s})} \to Y^0_{b_0'}
$$
to be the composition of the Hamiltonian isotopy on $Y^0_{b_0}$, parallel transport counterclockwise around the circle an angle of $2\pi s$, and then the inverse of the Hamiltonian isotopy, bringing us to a fiber $Y_{b_0'}$. Lastly, let
$$
\ell_i':=\Phi_{\gamma_{L_i}(t_0)}(\ell_i)=L_i|_{b_0}, \qquad \ell_j'':= \Phi_{\delta_{K_j}(s_0)}(\ell_j)=K_j|_{b_0}, \qquad \Phi:=\Phi^1
$$
so $\Phi=\Phi^1$ denotes the monodromy of $W_0$ around the singular fiber. With this set-up, we can now examine Floer theory between $L_i$ and $K_j$. We take the convention on cochain complexes that $C^n[p]=C^{n+p}$ and consider Floer theory with $K_j[2]$, which has phase 0 by Equation \eqref{eq:K_ends}. With the choice of grading on $L_i, K_j[2]$ in Definition \ref{def:grading}, the degrees of the base intersection points matches that in Figure \ref{fig:usu_bigon}, so appearing in degrees 0 and 1. 

\begin{lemma}\label{lem:chaincx_KL}
$CF^*(L_i, K_j[2]) \cong \bC^{(i-j)^2} \oplus \bC^{(i+1-j)^2}$. 
\end{lemma}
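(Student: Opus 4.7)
The plan is to enumerate the transverse intersections of $L_i$ and $K_j$ in $Y^0$ by projecting to the base, counting the two base intersections, and then counting fiber intersections over each with careful tracking of the symplectic monodromy of $W_0$ around its unique critical value $-T^\epsilon$.

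First I would observe that after a small transverse perturbation (legitimate since both curves emanate from the common basepoint $b_S=-S$), the base projections meet only at the two points $w_1$ and $w_2$ already named in the statement and depicted in Figure \ref{fig:ham_inv_Floer}, both disjoint from $-T^\epsilon$. Since $L_i,K_j$ are defined by parallel transport of fiber tori, every intersection in $Y^0$ projects to some $w_a$, and over $w_a$ the intersection is transverse between the affine sub-tori $\ell_i'(w_a):=\Phi_{b_S\to w_a,\ \gamma_{L_i}}(\ell_i)$ and $\ell_j''(w_a):=\Phi_{b_S\to w_a,\ \delta_{K_j}}(\ell_j)$ in $W_0^{-1}(w_a)\cong T^4$.

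Second I would count fiber intersections. A linear slope-$k$ fiber Lagrangian is cut out (in the coordinates of Definition \ref{def:U_shape}) by $\theta+kM^{-1}\xi\in\bZ^2$ where $M$ is the principal polarization matrix of $V$. Two such tori of slopes $k,k'$ intersect transversely in $|\det((k-k')I)|=(k-k')^2$ points, obtained by solving $(k-k')M^{-1}\xi\in\bZ^2$ modulo the lattice.

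Third I would identify the monodromy contribution. The two paths $b_S\rightsquigarrow w_a$ (one via $\gamma_{L_i}$, one via $\delta_{K_j}$) concatenate into a loop in $\bC^*\setminus\{-T^\epsilon\}$ whose homotopy class depends on $a$. Since $\gamma_{L_i}$ is a U-shape around $-T^\epsilon$ while $\delta_{K_j}$ passes through the puncture $0$ rather than around $-T^\epsilon$, a quick inspection shows that exactly one of these loops (say at $w_2$) encircles $-T^\epsilon$ once and the other (at $w_1$) is contractible in $\bC^*\setminus\{-T^\epsilon\}$; loops around $0$ are immaterial since $0$ is not a critical value of $W_0$ on $Y^0$. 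The symplectic monodromy of $W_0$ around its unique critical fiber, modeled locally on $v_0=y_1y_2y_3$ and already computed in \cite[\textsection 3.5]{Ca20}, acts on a linear fiber Lagrangian by shifting the slope parameter by $+1$. Consequently, the effective slopes are $(i,j)$ over $w_1$ (contributing $(i-j)^2$ intersections) and $(i+1,j)$ over $w_2$ (contributing $(i+1-j)^2$ intersections), so summing yields
\begin{equation*}
CF(L_i,K_j)\;\cong\;\bC^{(i-j)^2}\oplus\bC^{(i+1-j)^2}.
\end{equation*}
The main obstacle is the precise monodromy bookkeeping: verifying which of $w_1,w_2$ is the monodromy-free base intersection and that the slope shift is by $+1$ (giving $(i+1-j)^2$) rather than $-1$. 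This reprises the analysis in \cite[Lemma 5.14]{Ca20} for morphisms between two U-shapes, with the new input being that $K_j$ does not enclose $-T^\epsilon$, so the full monodromy contribution is concentrated at a single base intersection rather than distributed between both.
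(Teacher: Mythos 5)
Your proposal is correct and follows essentially the same route as the paper: two base intersections, the fiber count $HF_{T^4}(\ell_k,\ell_{k'})\cong\bC^{(k-k')^2}$ over each, and the relative monodromy of $W_0$ around the singular value $-T^\epsilon$ shifting the slope by $+1$ at exactly one of the two points (the paper implements this by Hamiltonian-perturbing the base bigon to a circle around $-T^\epsilon$ and invoking $\Phi^1(\ell_i)\simeq\ell_{i+1}$ from \cite[Lemma 4.24]{Ca20}). The only cosmetic difference is that you phrase the monodromy via concatenated loops based at $b_S$ rather than via the parametrized transports $\Phi^s$ between the two fibers, which amounts to the same bookkeeping.
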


\begin{proof}
As depicted in Figure \ref{fig:ham_inv_Floer},
\begin{equation}
    \begin{aligned}
        CF_{Y^0}^*(L_i, K_j[2]) & = HF^*_{Y^0_{b_0}}(\Phi_{\gamma_{L_i}(t_0)}(\ell_i), \Phi_{\delta_{K_j}(s_0)}(\ell_j))[1] \oplus  HF^*_{Y^0_{b_1}}(\Phi^{1-s}(\ell'_i), \Phi^{-s}(\ell''_j)) \\
        & = HF^*_{Y^0_{b_S}}(\Phi_{\delta_{K_j}(s_0)}^{-1}\Phi_{\gamma_{L_i}(t_0)}(\ell_i), \ell_j)[1] \oplus  HF^*_{Y^0_{b_1}}(\Phi^{1}(\ell_i'), \ell_j')\\
        &=HF^*_{T^4}(\ell_i, \ell_j)[1] \oplus  HF^*_{T^4}(\ell_{i+1}, \ell_j) \cong \bC^{(i-j)^2} \oplus \bC^{(i+1-j)^2}
    \end{aligned}
\end{equation}
where we use $HF$ on the righthand-side because in a fiber $\dd=0$ as there are no bigons between linear Lagrangians in a 4-torus. We start with $\ell'_i, \ell''_j$ in the fiber over the left intersection point $b_0$, and parallel transport around the singular fiber. The second equality follows because parallel transport $\Phi_{\gamma(0) \to \gamma(1)}$ is a diffeomorphism so preserves intersection points, hence we can apply $\Phi_{\delta_{K_j}(s_0)}^{-1}$ and $\Phi^{s}$ to Lagrangians in the first and second factor of the direct sum, respectively. Then note that $\Phi_{\delta_{K_j}(s_0)}^{-1}\Phi_{\gamma_{L_i}(t_0)}$ is parallel transport around a loop not containing the critical value so is Hamiltonian by \cite[Theorem 6.21(ii)]{symp_intro}, and $\Phi^{1}(\ell_i)$ is Hamiltonian isotopic to $\ell_{i+1}$ by \cite[Lemma 4.24]{Ca20}. Thus the statement follows from the Hamiltonian invariance of $HF$ \cite[Theorem 1.5]{fuk_intro}. 

\begin{figure}[h]
\begin{subfigure}{0.55\textwidth}
      \centering
{\fontsize{25pt}{27pt}\selectfont
\resizebox{100mm}{!}{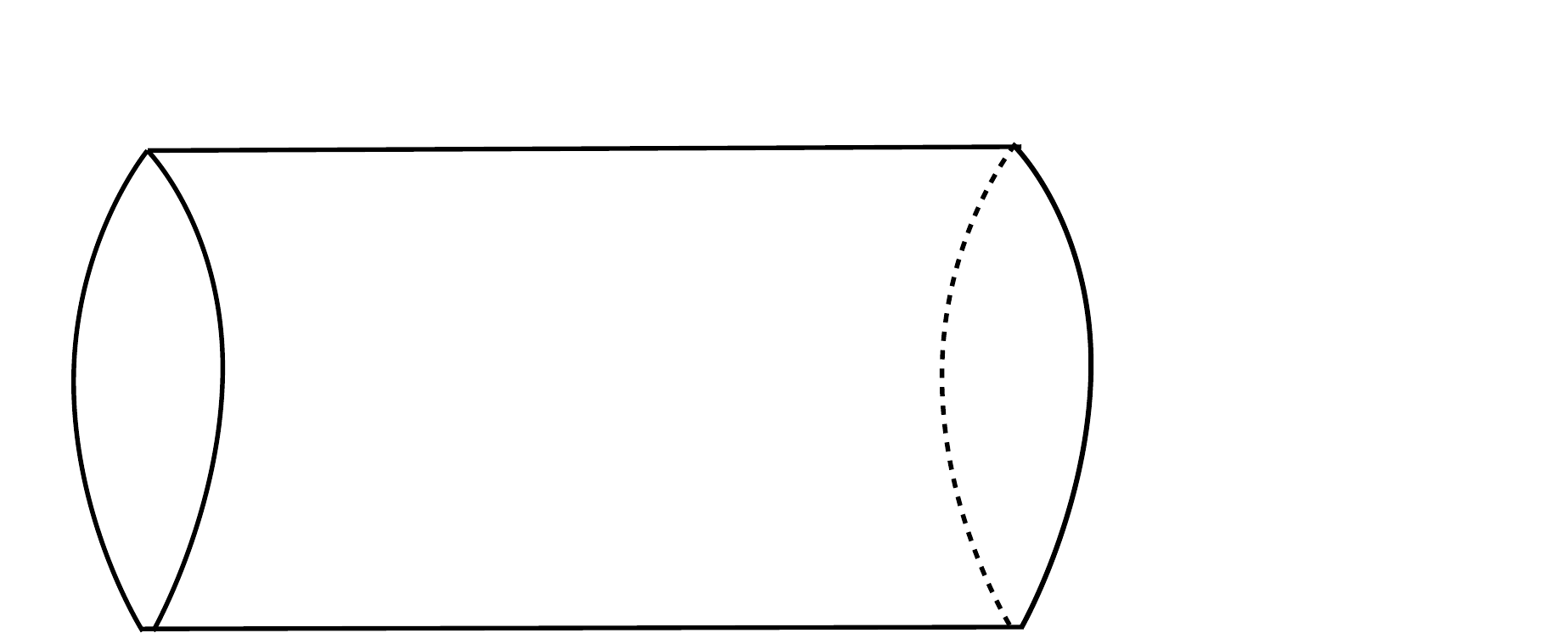}}
    \caption{The base as a cylinder. The Lagrangian intersection points of $CF^*(L_i,K_j[2])$ lie in two fibers over the base intersections, upon positively wrapping the ends of $L_i$ by $\phi$. Here from the left intersection point to the right, we move counterclockwise $2\pi (1-s)$ around $W_0(\phi(L_i))$ and clockwise/to the right $2\pi s$ along $W_0(K_j[2])$. So up to diffeomorphism the differential counts bigons between two fibers, $\ell_i \cap \ell_j$ and $\Phi(\ell_i) \cap \ell_j$ where  $\Phi$ denotes monodromy around the singularity, and $\Phi(\ell_i) \cong \ell_{i+1}$ in the Fukaya category.}
    \label{fig:cylinder_KL}
\end{subfigure}
\hspace{3pt}
\begin{subfigure}{0.43\textwidth}
    \centering
{\fontsize{10pt}{12pt}\selectfont
 \def\svgwidth{0.9\columnwidth}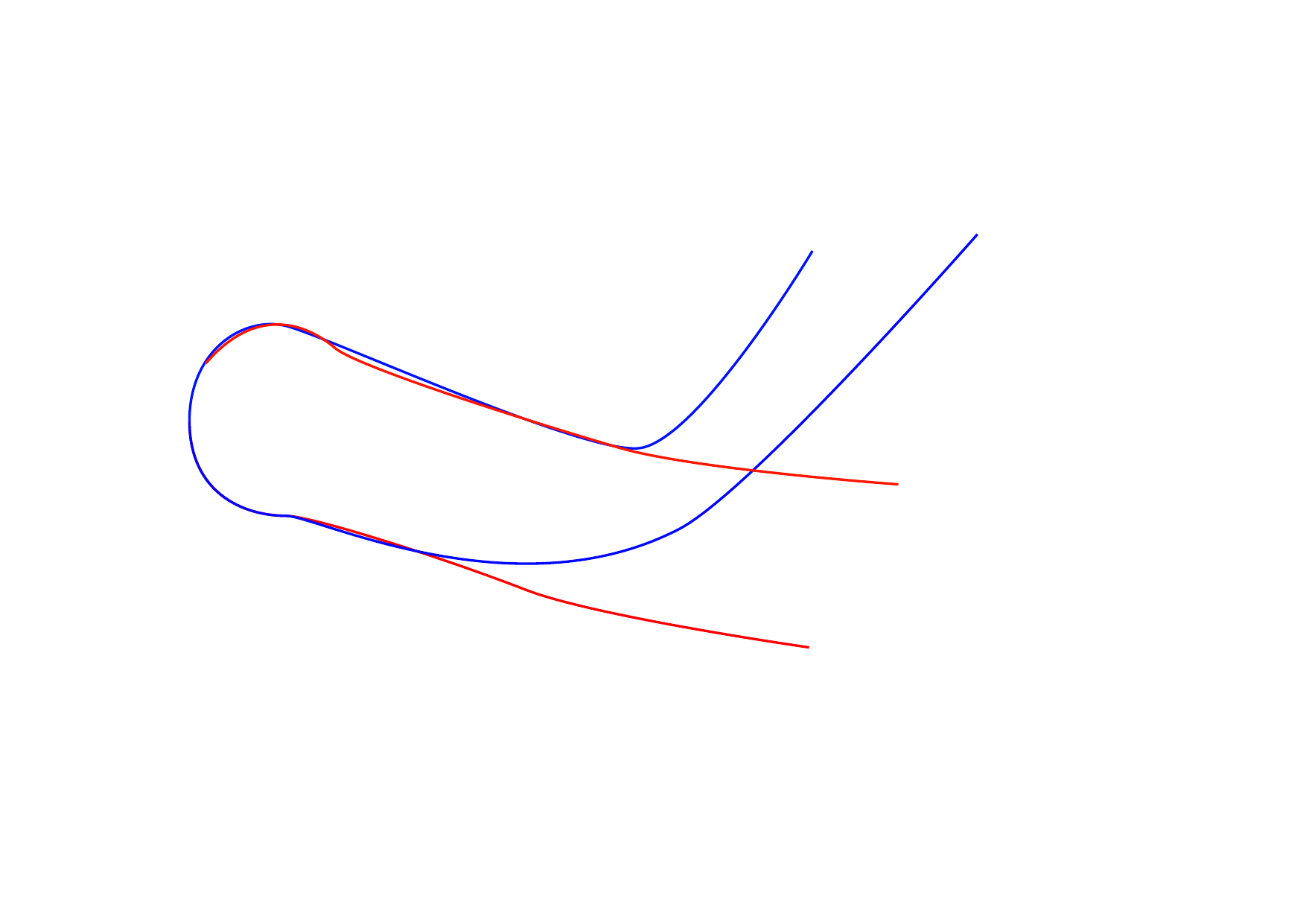}
    \caption{The base as $\bC^*$: projection of indicated Lagrangians under $W_0$}
    \label{fig:plane_KL}
    \end{subfigure}
    \caption{Projection of $K_i[2]$, $L_j$, and the positively wrapped $\phi(L_j)$ to the base of $W_0:Y^0 \to \bC$.  There is a puncture at 0, and  the singular fiber is at $-T^\epsilon$, denoted by $\times$.}
    \label{fig:ham_inv_Floer}
\end{figure}

\end{proof}

Now we compute the differential and pass to cohomology. Note that $HF(L_i,K_j[2])$ is supported in degree 1 but the mirror morphism group is supported in degree 0. We take a shift of $K_j[2]$ down by 1, accordingly, and then $K_j[1]$ has phase $-1/2$ in the base, analogous to the U-shape ends.

\begin{lemma}[$\Hom(\mathcal{A}_L, \mathcal{A}_K)$]\label{lem:relate_to_Ca} The cohomology of $CF(\ell_{i+1},\ell_j)[-1]$ $  \xrightarrow{\dd}  CF(\ell_i,\ell_j)  \to  HF(L_i, K_j[1])  \to  0
$ is  
\begin{equation}
    HF^*(L_i, K_j[1]) =HF^*(L_i, L_j)=\bC^{(i-j)^2} / C  \dd( \bC^{(i+1-j)^2}) .
\end{equation}
\end{lemma}
\begin{proof}
The moduli spaces counted in the differential are isomorphic to those counted for two U-shapes in Lemma \ref{lem: F1_mors}, as illustrated in the identification in Figure \ref{fig:betweenK_L}. Note that the grading on $K_j[1]$, under this identification, matches with the grading on the U-shaped Lagrangian $L_j$.\end{proof}

\begin{figure}[h]
      \centering
{\fontsize{10pt}{12pt}\selectfont
 \def\svgwidth{0.7\columnwidth} 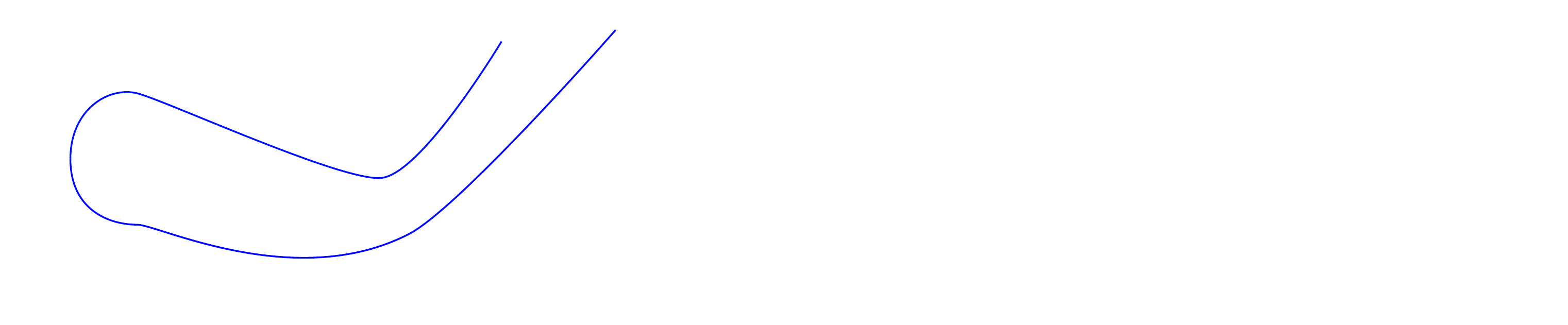}
    \caption{The differential on $CF(L_i,K_j[1])$ is equivalent to the differential on $CF(L_i,L_j)$. This can be seen by applying a Lagrangian isotopy $\phi'$ on the arc $K_j[1]$  homotoping the curve in the base as shown in the solid line, and then completing it to a U-shape as shown with the additional dotted line. The resulting moduli spaces of bigons in the differentials can then be identified. Furthermore, the gradings on $K_j[1]$ and $L_j$ agree under this identification near the intersection points.}
    \label{fig:betweenK_L}
\end{figure}

\begin{lemma}[{$\Hom(\mathcal{A}_K, \mathcal{A}_L)$}]\label{lem:orthoA}
Swapping the order of the objects, $HF(K_j, L_i)=0$.
\end{lemma}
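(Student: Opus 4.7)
The plan is to reduce the computation to Floer cohomology of un-derived Lagrangians via the homotopy colimit model. By Equation~\eqref{eq:we_can_wrap} applied to $L_0 = K_j$ and $L_1 = L_i$,
\[
HF^*(K_j, L_i) \;=\; H\mathcal{W}(K_j, L_i) \;=\; \text{colim}_{m\to\infty}\, HF^*(K_j^0, L_i^m).
\]
So it is enough to prove that $HF^*(K_j^0, L_i^m) = 0$ for every $m \ge 1$; the colimit is then zero.

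I would establish this at the chain level by showing that, for each $m$, the Lagrangians $K_j$ and $L_i^m$ are disjoint in $Y^0$, so $CF^*(K_j,L_i^m) = 0$. The disjointness will follow from a disjointness of projections under $W_0$. The projection $W_0(K_j)$ is the ray in $\bC^*$ at angle $\theta_h(K_j)\cdot 2\pi$, which by Definition~\ref{def:F2objs} satisfies $\theta_h(K_j) > \theta_{h_1}(L_i), \theta_{h_2}(L_i)$. The projection $W_0(L_i)$ is the U-shape $\gamma_{L_i}$ of Definition~\ref{def:U_shape}; taking its $\theta$-coordinate to vary monotonically on each of $\{t<0\}$ and $\{t>0\}$ between $\theta(0) = -1/2$ and the end-angles $\theta_{h_2}(L_i), \theta_{h_1}(L_i)$ respectively (the parameterization depicted in Figure~\ref{fig:base_cylinder}), one has $W_0(L_i) \subset \{w \in \bC^* : \arg(w)/2\pi \in [-1/2, \theta_{h_2}(L_i)]\}$. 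By Equation~\eqref{eq:Hamilt}, the wrapping Hamiltonian vanishes on the annulus $R_2 < |w| < R_3$ containing the bending region of $\gamma_{L_i}$, and by Equation~\eqref{eq:pos_wrap_partial} its effect on the ends at infinity is to rotate them clockwise; hence the same containment holds for $W_0(L_i^m)$ for every $m\ge 0$. Since $\theta_h(K_j)\notin [-1/2, \theta_{h_2}(L_i)]$, we get $W_0(K_j)\cap W_0(L_i^m)=\emptyset$, so $K_j\cap L_i^m = \emptyset$ in $Y^0$.

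Taking the colimit then gives $HF^*(K_j,L_i)=0$, matching the B-side vanishing of Equation~\eqref{eq:orthoB}. The main point I expect to check carefully is that the intended Hamiltonian isotopy class of $L_i$ is the one whose projection lies entirely in the angular strip $\arg(\cdot)/2\pi\le \theta_{h_2}(L_i)$, rather than the alternative class that winds through $\theta = \pm 1/2$ and crosses the $K_j$-ray. This is built into the chosen sign convention of Equation~\eqref{eq:Hamilt}, the pictorial conventions of Figure~\ref{fig:base_cylinder}, and the choice of $\theta_h(K_j)$ to lie strictly above all U-shape ends by Definition~\ref{def:F2objs}; once these are fixed, the disjointness argument is essentially pictorial and no perturbation is needed.
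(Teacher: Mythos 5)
Your proof is correct and takes essentially the same approach as the paper: both arguments reduce the vanishing to the observation that the projections of $K_j$ and of the (wrapped) $L_i$ to the base remain disjoint, since $\theta_h(K_j)$ lies strictly above the angular strip swept out by $\gamma_{L_i}$ and the wrapping only increases the separation, so there are no generators at the chain level. The only cosmetic difference is that you implement Equation \eqref{eq:we_can_wrap} literally by negatively wrapping the output $L_i$, whereas the paper phrases the same disjointness in terms of positively wrapping the input $K_j$; these are equivalent via the continuation maps of Lemma \ref{lem:contn_inverse}.
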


\begin{proof}
Recall from Equation \eqref{eq:we_can_wrap} that we first wrap the ends of the projected output Lagrangian negatively arbitrarily close to $-\pi$, or equivalently we can wrap that of the input $W_0(K_j)$ positively arbitrarily close to $\pi$. Then chain-level morphisms are given by the cochain complex defined in Equation \eqref{eq:chain_level_mor} generated by intersection points between the resulting wrapped input and original output. Wrapping here does not introduce more intersection points because the end of the projected input Lagrangian $W_0(K_j)$ already lies above (in other words, counterclockwise from) the ends of the projected output Lagrangian $W_0(L_i)$, see Figure \ref{fig:ham_inv_Floer}. No intersection points in the base means no intersection points in the total space and therefore the morphism group $HF(K_j,L_i)=0$. 
\end{proof}

This exhibits the semi-orthogonality between  $\mathcal{A}_L, \mathcal{A}_K$ because there are morphisms in one direction and none in the other.

\begin{lemma}[Morphisms on $\mathcal{A}_K$]\label{lem:morsF2}
\begin{equation}
 HF^d(K_i, K_{j}) \cong \begin{cases}
     \mathbb{C}^{(j-i)^2}[x] & j>i, d=0 \text{ or } j<i, d=2\\
     \mathbb{C}^{d \choose 2}[x] & i=j\\
     0 & \text{ else }
 \end{cases}.
\end{equation}
\end{lemma}

\begin{proof} Intersection points in the total space cover intersections in the base of $W_0$. Morphisms and their composition in the base cylinder are given by 
\begin{equation}\label{eq:cyl_base_Floer}
    HF^*_{\mathbb{C}^*}(W_0(K_i),W_0(K_j))\cong\bC[x]
\end{equation}
as algebras, supported in degree 0, by counting intersections and triangles in the universal cover, using a quadratic Hamiltonian as explained in Remark \ref{rem:wrap}. Note that the differential is 0 in the base because there is no bigon between two lines of fixed slope. The details of the calculation are given in \cite[\textsection 4.2]{fuk_intro}, using a rescaling trick. Note that the coordinates $r,\theta$ in \cite[\textsection 4.2]{fuk_intro} are $\mu,\theta$ here. 

In a torus fiber over a base intersection point, $CF_{T^4}^*(\ell_i,\ell_j) = HF^*_{T^4}(\ell_i,\ell_j) $ is given by Equation \eqref{eq:mors_Aside_fiber}. Thus over each base intersection point corresponding to $x^k$ under the isomorphism in Equation \eqref{eq:cyl_base_Floer}, we have 
 \begin{equation}
 HF^d(K_i, K_{j}) =CF^d(\ell_i,\ell_j) \otimes \bC[x] \cong \begin{cases}
     \mathbb{C}^{(j-i)^2}[x] & j>i, d=0 \text{ or } j<i, d=2\\
     \mathbb{C}^{d \choose 2}[x] & i=j\\
     0 & \text{ else }
 \end{cases}.
 \end{equation}
\end{proof}

Note that by \cite{Ab10}, $W_0(K_j)$ split-generates the fully-wrapped Fukaya category of $T^*S^1$. 

\subsection{Product structure}\label{sec:comp}

We compose morphisms from inside to out, meaning from right to left. Due to orthogonality on the A- and B-sides, composition between factors is nonzero in two cases.

{\bf B-side:} Composition in the bounded derived category of coherent sheaves reduces to multiplication of theta functions, restricted to the hypersurface. 

First, we have composition within the $H$ factor and also within the $V \times \bC$ factor. Between factors, we can express the two possible cases roughly as  
$$
\text{Hom}_{D^b(X)}(D^b(H), D^b(V \times \bC))\otimes \text{Hom}_{D^b(X)}(D^b(H), D^b(H)) \to \text{Hom}_{D^b(X)}(D^b(H), D^b(V \times \bC))
$$
and
$$
\text{Hom}_{D^b(X)}(D^b(V\times \bC), D^b(V \times \bC))\otimes \text{Hom}_{D^b(X)}(D^b(H), D^b(V\times \bC)) \to \text{Hom}_{D^b(X)}(D^b(H), D^b(V \times \bC)).
$$
Other compositions involving both $H$ and $V \times \bC$ factors are 0. More specifically, we consider
\begin{equation}
\begin{aligned}
& \Ext^{d_2}(j_*(p^*i^*(\cL^{k_2} \boxtimes \cO_\bC)\otimes \cO_E(E)),\pi^*(\cL^{k_3} \boxtimes \cO_\bC)) \\
&\otimes \Ext^{d_1}(j_*(p^*i^*(\cL^{k_1} \boxtimes \cO_\bC)\otimes \cO_E(E)),j_*(p^*i^*(\cL^{k_2} \boxtimes \cO_\bC)\otimes \cO_E(E)))\\
&\hspace{1in} \to \Ext^{d_1+d_2}(j_*(p^*i^*(\cL^{k_1} \boxtimes \cO_\bC)\otimes \cO_E(E)),\pi^*(\cL^{k_3} \boxtimes \cO_\bC))
\end{aligned}
\end{equation}
which reduces to 
\begin{equation}\label{eq:compn_1}
\Ext^{d_2}(\cL^{k_2}|_H, \cL^{k_3}|_H)[-1] \otimes \Ext^{d_1}(\cL^{k_1}|_H, \cL^{k_2}|_H) \to \Ext^{d_1+d_2}(\cL^{k_1}|_H, \cL^{k_3}|_H)[-1]
\end{equation}
and
$$
\Ext^{d_2}(\pi^*(\cL^{k_2} \boxtimes \cO_\bC),\pi^*(\cL^{k_3} \boxtimes \cO_\bC)) \otimes \Ext^{d_1}(j_*(p^*i^*(\cL^{k_1} \boxtimes \cO_\bC)\otimes \cO_E(E)),\pi^*(\cL^{k_2} \boxtimes \cO_\bC))
$$
\begin{equation}
\hspace{1in} \to \Ext^{d_1+d_2}(j_*(p^*i^*(\cL^{k_1} \boxtimes \cO_\bC)\otimes \cO_E(E)),\pi^*(\cL^{k_3} \boxtimes \cO_\bC))
\end{equation}
which reduces to
\begin{equation}\label{eq:compn_2}
\Ext^{d_2}(\cL^{k_2}|_H, \cL^{k_3}|_H) \otimes \Ext^{d_1}(\cL^{k_1}|_H, \cL^{k_2}|_H)[-1] \to \Ext^{d_1+d_2}(\cL^{k_1}|_H, \cL^{k_3}|_H)[-1]
\end{equation}
where we used the isomorphisms of Equations \eqref{eq:1stpart-nonorthoBside} and \eqref{eq:nonorthoB}
$$
\Ext_X(j_*(p^*i^*(\cL^{k_1} \boxtimes \cO_\bC)\otimes \cO_E(E)),\pi^*(\cL^{k_2} \boxtimes \cO_\bC))\cong \Ext_H(\cL^{k_1}|_H, \cL^{k_2}|_H)[-1]
$$
under which we restrict to $H \times \{0\}$. Sections of $\cL^j$ for $j >0$ are multi-theta functions, infinite series, as in \cite[Equation (2.14), $g=2$]{ACLL_tori}. These are restricted to $H$. Sections of $\cO_\bC$,  polynomials, are evaluated at 0. We calculate   $\text{Ext}^*(\cL^i|_H,\cL^j|_H) \cong H^*(\cL^{j-i}|_H)$ by using the long exact sequence associated to tensoring the short exact sequence
\begin{equation}\label{eq:ses}
    0 \to \cL^{-1} \to \cO_V \to \iota_*\cO_H \to 0
\end{equation}
with $\cL^{\otimes(j-i)}$, for $\iota:H \to V$.  Taking $l=j-i$ and for brevity letting $\mathcal{L}^{\otimes l}$ be denoted as $\mathcal{L}^l$, we find that the cohomology groups of $\cL^l|_H$ can be expressed in terms of cohomology of line bundles on $V$ in \cite[Theorem 3.3]{ACLL_tori}. We have
\begin{equation}\label{eq:homs_Bside_forH}
    H^d(\cL^l|_H) \cong \begin{cases}
        \delta_{d0}\cdot \text{coker}(H^0(\cL^{l-1}) \xrightarrow[]{\otimes \vartheta} H^0(\cL^l)) & l>1\\
        H^{d+1}(\cO_V) & l=1\\
        H^d(\cO_V) & l=0\\
        \delta_{d1} \cdot\text{ker}(H^1(\cL^{l-1}) \xrightarrow[]{\otimes \vartheta} H^1(\cL^l)) & l<0
    \end{cases}, \qquad 0 \leq d \leq 1
\end{equation}
  Composition is given by multiplication of theta functions \cite[\textsection 3.3]{ACLL_tori}. 

{\bf A-side.} Composition in the Fukaya category is defined by counting triangles weighted by area, see \cite[Definition 5.13, $k=2$]{ACLL_Bfield}. 

When composing two morphisms on the A-side, either the first morphism is on the $\cA_L$ factor and then we compose with a morphism in $\text{Hom}(\cA_L,\cA_K)$, or the first morphism is in $\text{Hom}(\cA_L,\cA_K)$ and we compose with one in $\cA_K$. Other compositions, involving both factors, are zero by semi-orthogonality. In other words, the three Lagrangians involved in a given composition will either have two which cover cotangent fiber arcs and one which covers a U-shape, or two which cover U-shapes and one which covers a cotangent fiber arc. Let $\mathbf{L}_j$ denote $L_j$ (respectively $K_j[1]$). For the moduli space of discs $\cD(Y^0, (L_{j_1},\mathbf{L}_{j_2}, K_{j_3}[1]), (p_0,p_1,p_2))$ with Lagrangian boundary conditions defined in \cite[Definition 5.1]{ACLL_Bfield}, let 
\begin{equation}
    \begin{aligned}
        &\cM(Y^0, (L_{j_1},\mathbf{L}_{j_2}, K_{j_3}[1]), (p_0,p_1, p_2); \beta, J)=\\
        &        \hspace{1in}
 \{u \in \cD(Y^0, (L_{j_1},\mathbf{L}_{j_2}, K_{j_3}[1]), (p_0,p_1,p_2)) \mid [u] = \beta, \overline\partial_J(u)=0, E(u)<\infty \}
    \end{aligned}
\end{equation}
be the moduli space of $J$-holomorphic discs of class $\beta$ counted in the composition
\begin{equation}\label{eq:comp}
    HF^*(\mathbf{L}_{j_2},K_{j_3}[1]) \otimes HF^*(L_{j_1},\mathbf{L}_{j_2}) \to HF^*(L_{j_1},K_{j_3}[1])
\end{equation}
depicted under projection of $W_0$ in Figure \ref{fig:1cot_2U} when $\mathbf{L}_{j_2}=L_{j_2}$ (respectively in Figure \ref{fig:2cot_1U} when $\mathbf{L}_{j_2}=K_{j_2}[1]$).

\begin{figure}[h]
    \centering
    {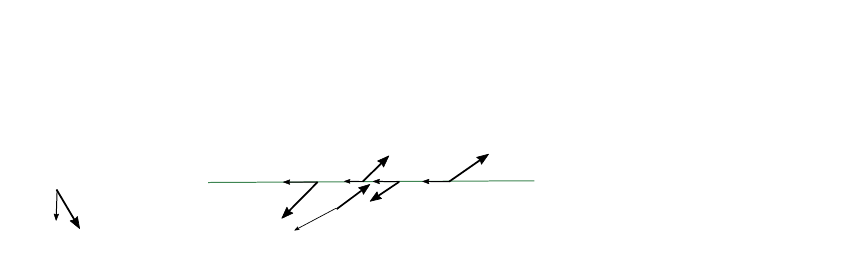}
    \caption{Composition $HF(L_{j_2},K_{j_3}[1]) \otimes HF(L_{j_1},L_{j_2}) \to HF(L_{j_1},K_{j_3}[1])$ depicted in the base. The thicker arrow corresponds to the grading on the input projected Lagrangian of the Floer group. }
    \label{fig:1cot_2U}
\end{figure}

\begin{figure}[h]
    \centering
    \resizebox{6in}{!}{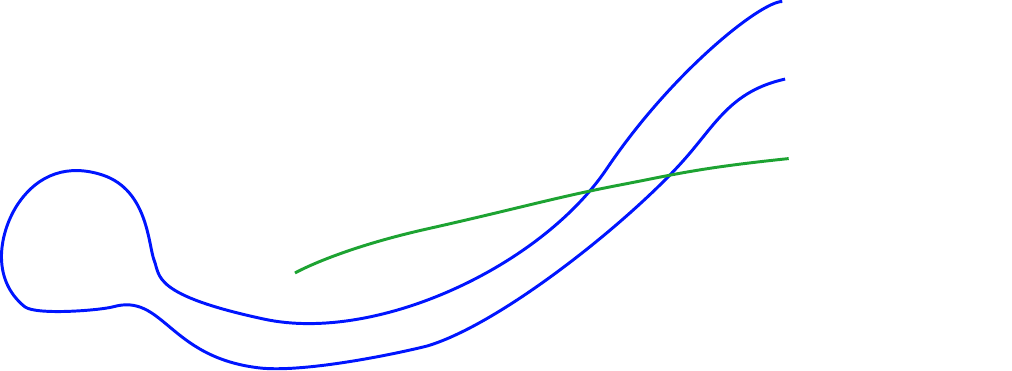}
    \caption{Composition $HF(K_{j_2}[1],K_{j_3}[1]) \otimes HF(L_{j_1},K_{j_2}[1]) \to HF(L_{j_1},K_{j_3}[1])$ depicted in the base.}
    \label{fig:2cot_1U}
\end{figure}

Then we claim that these compositions can be calculated by moving one Lagrangian so that all three points in the base coincide, see Lemma \ref{lem:prism}. Then we can count triangles in the fiber, see Lemma \ref{lem:compn_final}. 

\begin{lemma}\label{lem:prism}  
Consider a pseudo-holomorphic triangle $u \in \cM(Y^0, (L_{j_1},\mathbf{L}_{j_2}, K_{j_3}[1]), (p_0,p_1, p_2); \beta, J)$ counted in the composition 
$$
HF^*(\mathbf{L}_{j_2},K_{j_3}[1]) \otimes HF^*(L_{j_1},\mathbf{L}_{j_2}) \to HF^*(L_{j_1},K_{j_3}[1]),
$$
where $W_0(\beta)$ covers the shaded region in Figure \ref{fig:1cot_2U} for $\mathbf{L}_{j_2}=L_{j_2}$, respectively Figure \ref{fig:2cot_1U} for $\mathbf{L}_{j_2}=K_{j_2}[1]$. Let $u'$ be the new piecewise curve obtained after Lagrangian isotopy of $K_{j_3}[1]$, respectively $L_{j_1}$, so that all three intersection points pass through the same fiber. 

Then 
\begin{enumerate}[label=(\roman*)]
    \item the image of $u'$ is relatively homologous to a triangle $u''$ in a fiber, and
    \item after applying a Hamiltonian diffeomorphism to the isotoped Lagrangian, respectively also a symplectomorphism, $u''$ is bounded by $\ell_{j_1} \cup \ell_{j_2} \cup \ell_{j_3}$ in the fiber over $-S$.
\end{enumerate} 
\end{lemma}

\begin{proof}
    (i) First assume $\mathbf{L}_{j_2}=L_{j_2}$. We will prove the statement by constructing a contractible singular 3-chain $Q$ whose boundary relative the Lagrangians is $[u']-[u'']$. First we describe the construction in the base of $W_0$.
    
    Let $\delta(s,t): I \times I \to \bC$ for $I=[0,1]$ parametrize the projection of the triangle $W_0( u(\bD))$, as follows. In the shaded triangle in Figure \ref{fig:1cot_2U}, on the boundary, the top of the square $I \times \{1\}$ maps to right side of the triangle along $W_0(L_{j_2})$, the left side of the square $\{0\} \times I$ maps to top of the triangle along $W_0(K_{j_3}[1])$, the bottom side of the square $I \times \{0\}$ maps to the the left of the triangle along $W_0(L_{j_1})$, and the right side of the square $\{1\} \times I$ maps to $b_0$. On its boundary, reparametrize the base curves defining the Lagrangians to match $\delta(s,t)$ along the boundary, so that
\begin{equation}\label{eq:match_params}
\begin{aligned}
    \delta(s,1) & = \gamma_{L_{j_2}}(t_{L_{j_2}}(s)) \\
    \delta(0,t) &= \gamma_{K_{j_3}}(t_{K_{j_3}}(t))  \\
    \delta(s,0) &= \gamma_{L_{j_1}}(t_{L_{j_1}}(s)) \\
    \delta(1,t) &= b_0.
    \end{aligned}
\end{equation}
Extend to the interior so that $\delta: (0,1) \times (0,1) \to (W_0\circ u)(\text{int}(\bD))$ is a homeomorphism. Then the Lagrangian isotopy of $K_{j_3}[1]$, in the base, contracts the curve $\{\delta(s,t)\}_{0 \leq s \leq 1}$ for each fixed $t$. That is, at $s=0$, $\delta(0,t) \in W_0(K_{j_3}[1])$, while at $s=1$, $\delta(1,t) = b_0$. 
    
 Now we define the singular chain $Q$ in the total space as a continuous map from the unit cube to $Y^0$. Again we can replace $u$ with $u_D$ as in Equation \eqref{eq:make_section}, over the region in the base defined by $A(s,t)=\delta(s,t)$ here. We extend $\delta(s,t)$ to the total space by parallel transport. Recall $\Phi_{\gamma(0) \to \gamma(1)}$ denotes parallel transport along a path $\gamma$ between the fibers over $\gamma(0) \to \gamma(1)$, Definition \ref{def:par_transport}. Define the singular 3-chain $Q$ to be $u_D$ on the bottom face $I \times I \times \{0\}$, the continuous map 
\begin{equation}\label{eq:Jhol_tri_fiber}
u''=\Phi_{\delta(s,t) \to \delta(1,t)}\circ u_D    
\end{equation}
to the fiber over $b_0$ on the top face $I \times I \times \{1\}$, and interpolate in between:
    \begin{equation}
        \begin{aligned}
              Q:  I \times I \times I & \to Y^0\\
(s,t,h) & \mapsto \begin{cases}
    \Phi_{\delta(s,t) \to \delta(h,t)} u_D(\delta(s,t)),& h \geq s\\
    u_D(\delta(s,t)), & h <s
\end{cases}.
        \end{aligned}
    \end{equation} 

Then $Q$ is a continuous map by definition. The image of $Q$ is depicted in \cite[Figure 2]{ACLL_LG}, the prism over the middle image. Note that the construction there is an example of $P$ in \cite[Equation (6.11)]{ACLL_Bfield} for LG models and describes the boundary of $Q$ constructed here. The map $u'$ is defined by keeping $u$ and then ``stretching" it further as we move the Lagrangian \cite[Equation (6.11)]{ACLL_Bfield}. On $\dd Q$, (paranthetical comments refer to \cite[Figure 2]{ACLL_LG})
\begin{equation}\label{eq:boundaryofQ}
    \begin{aligned}
        h=0, \quad 0 \leq s,t \leq 1 \quad & \mapsto [u] \text{ (bottom triangle) }\\
        h=1, \quad 0 \leq s, t \leq 1 \quad & \mapsto [u''] \text{ (fiber triangle) }\\
        s=0, \quad 0 \leq h, t \leq 1 \quad & \mapsto [u'] - [u] \text{ (top quadrilateral obtained from the Lagrangian isotopy) }\\
        s=1, \quad 0 \leq h, t \leq 1 \quad & \mapsto u_D(b_0) \text{ (constant at bottom vertex) }\\
        t=0, 1, \quad 0 \leq s,h \leq 1 \quad & \mapsto L_{j_1} \cup L_{j_2} \text{ (back and front triangles) }.
    \end{aligned}
\end{equation}

(ii) By Equation \eqref{eq:Jhol_tri_fiber}, the homology class $[u''] \in H_2(T^4)$ is in the fiber over $-S$ with boundary on the following Lagrangians:
\begin{equation}
   \Phi_{ \gamma_{L_{j_1}}(t_{L_{j_1}}(s)) \to \gamma_{L_{j_1}}(0)} \circ \Phi_{\gamma_{L_{j_1}}(0)\to \gamma_{L_{j_1}}(t_{L_{j_1}}(s))}(\ell_{j_1}) = \ell_{j_1}
\end{equation}
and 
\begin{equation}
   \Phi_{ \gamma_{L_{j_2}}(t_{L_{j_2}}(s)) \to \gamma_{L_{j_2}}(0)} \circ \Phi_{\gamma_{L_{j_2}}(0)\to \gamma_{L_{j_2}}(t_{L_{j_2}}(s))}(\ell_{j_2}) = \ell_{j_2}.
\end{equation}
For the third Lagrangian, consider 
\begin{equation}
\begin{aligned}
   \phi_{H_\Delta}:= \Phi_{\delta(1,t) \to \delta(0,t)} \circ \Phi_{\delta_{K_{j_3}}(0) \to \delta_{K_{j_3}}(t_{K_{j_3}}(t))}^{-1}: \bigcup_{0 \leq t \leq 1} Y^0_{\delta_{K_{j_3}}(t_{K_{j_3}}(t))} & \mapsto \bigcup_{0 \leq t \leq 1} Y^0_{\delta_{K_{j_3}}(t_{K_{j_3}}(t))} \\
    \Phi_{\delta_{K_{j_3}}(0) \to \delta_{K_{j_3}}(t_{K_{j_3}}(t))}(p) & \mapsto  \Phi_{\delta(1,t) \to \delta(0,t)}(p), \;\; p \in Y^0_{-S}
\end{aligned}
\end{equation}
because recall that $\delta_{K_{j_3}}(0) = -S = b_0 = \delta(1,t)$ by Definition \ref{def:F2objs}. The two parallel transport maps in $\phi_{H_\Delta}$ compose to parallel transport around a closed loop, $\delta_{K_{j_3}}(t_{K_{j_3}}(t))$ to $\delta_{K_{j_3}}(0)=-S$ backwards along $\delta_{K_{j_3}}$ and then $\delta(1,t)=-S$ backwards along the homotopy back to $\delta(0,t)$, which equals $\delta_{K_{j_3}}(t_{K_{j_3}}(t))$ by Equation \eqref{eq:match_params}. As this loop does not enclose the singular fiber, the composition is Hamiltonian on $\bigcup_{0 \leq t \leq 1} Y^0_{\delta_{K_{j_3}}(t_{K_{j_3}}(t))}$, by \cite[Theorem 6.21(ii)]{symp_intro}, and corresponds to some Hamiltonian function which we call $H_\Delta$ to indicate it's used in counting triangles. We extend $H_\Delta$ to the whole space smoothly and call the resulting Hamiltonian diffeomorphism $\phi_{H_\Delta}$. The reason for applying this Hamiltonian diffeomorphism is so that under the Lagrangian isotopy over $\delta$, the third side of the fiber triangle $u''$ is again in a linear Lagrangian
\begin{equation}
       \Phi_{\delta(0,t)\to \delta(1,t)} \circ \Phi_{\delta(1,t)\to \delta(0,t)}(\ell_{j_3}) = \ell_{j_3}.
\end{equation}
Thus because of Hamiltonian invariance of $HF$, we can replace the Lagrangian in our product and instead consider $u$ to be from
$$
HF(L_{j_2},\phi_{H_\Delta}(K_{j_3}[1])) \otimes HF(L_{j_1},L_{j_2}) \to HF(L_{j_1},\phi_{H_\Delta}(K_{j_3}[1])).
$$
Then $[u''] \in H_2(T^4,\ell_{j_1} \cup \ell_{j_2} \cup \ell_{j_3})$ represents a unique flat holomorphic triangle in the universal cover of $(T^4,\ell_{j_1} \cup \ell_{j_2} \cup \ell_{j_3})$, determined by the original choice of $\beta$ in the statement of the Lemma; the 1-1 condition on the boundary of $u_D$ implies $u''$ does not wrap multiple times, and moreover $u''$ is positively oriented because $u$ was. Thus we can replace $u''$ with a relatively homologous holomorphic curve, one of the curves counted in composition in the Fukaya category of $T^4$, \cite[\textsection 4.3]{ACLL_tori}.

Thus by Equation \eqref{eq:boundaryofQ}
\begin{equation}
\dd Q = 0 = [u]-[u'']+[u']-[u]+(\text{class in Lagrangians}) 
\end{equation}
which implies, relative the Lagrangians,
\begin{equation}\label{eq:homolog_tris}
    [u']=[u''] \in H_2\left(Y^0, L_{j_1} \cup L_{j_2} \cup \bigcup_{0 \leq t \leq 1}\Phi_{\delta(0,t) \to \delta(1,t)}\phi_{H_\Delta}(K_{j_3}[1]|_{\delta(0,t)}))\right)
\end{equation}
where $u''$ is in a $T^4$-fiber bounded by the three linear Lagrangians $\ell_{j_1} \cup \ell_{j_2} \cup \ell_{j_3}$.

The proof for $\mathbf{L}_{j_2} = K_{j_2}[1]$ as in Figure \ref{fig:2cot_1U} is similar, where $\delta(s,t)$ is a homotopy which moves the right side of the triangle, $L_{j_1}$, across to $\epsilon'$. Now $\phi_{H_\Delta}$ is chosen to have $L_{j_1}$ match  $K_{j_2}$ and $K_{j_3}$. Then in a fiber the triangle is bounded by $\Phi_{\delta_{K_j}(0) \to \epsilon'}(\ell_{j_1} \cup \ell_{j_2} \cup \ell_{j_3})$, going from $-S$ to $\epsilon'$ along the path for $W_0(K_j)$ in Definition \ref{def:F2objs}. We then apply the symplectomorphism $\Phi_{\delta_{K_j}(0) \to \epsilon'}^{-1}$ to the whole set-up, obtaining a bijection between triangles over $\epsilon'$ and triangles over $-S$ bounded by linear Lagrangians, preserving areas. 

\end{proof}

Thus, we are now able to compute composition on the A-model. 

\begin{lemma}\label{lem:compn_final}
    The compositions with (i) two U-shaped Lagrangians and one cotangent fiber Lagrangian
    $$
HF(L_{j_2},K_{j_3}[1]) \otimes HF(L_{j_1},L_{j_2}) \to HF(L_{j_1},K_{j_3}[1])
$$
and (ii) with two cotangent fiber Lagrangians and one U-shaped Lagrangian
    $$
HF(K_{j_2}[1],K_{j_3}[1]) \otimes HF(L_{j_1},K_{j_2}[1]) \to HF(L_{j_1},K_{j_3}[1]),
$$
after rescaling, are products in the abelian variety $T^4$ fiber, when $|j_i-j_k| \geq 2$. 
\end{lemma}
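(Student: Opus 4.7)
The plan is to reduce each of the two compositions to a count of holomorphic triangles in a single $T^4$-fiber, where the $\mu^2$-product is already known by \cite{fuk_abel} and \cite[\textsection 4.3]{ACLL_tori} to compute the expected theta-function product, and then to extract from Lemma \ref{lem:prism} (and its analog) together with \cite[Corollary 2.6]{ACLL_LG} the uniform rescaling coming from the base.

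For the first composition $HF(L_{j_2},K_{j_3}[-1]) \otimes HF(L_{j_1},L_{j_2}) \to HF(L_{j_1},K_{j_3}[-1])$, I take an arbitrary $u \in \cM(Y^0, (L_{j_1}, L_{j_2}, \Phi_H(K_{j_3}[-1])), (p_0,p_1,p_2); \beta, J)$ whose base projection covers the shaded region in Figure \ref{fig:1cot_2U}. Lemma \ref{lem:prism} already produces $u'$ via Lagrangian isotopy of $K_{j_3}$ and gives $[u'] = [u'']$ in $H_2(Y^0, L_{j_1} \cup L_{j_2} \cup \psi(\Phi_H(K_{j_3})))$, where $u''$ is the unique flat holomorphic triangle in $T^4_{-S}$ bounded by $\ell_{j_1} \cup \ell_{j_2} \cup \ell_{j_3}$ in the given homology class. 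For the second composition, I repeat the prism construction sketched after Lemma \ref{lem:prism}, taking $\delta(s,t)$ to move the $L_{j_1}$ edge across to $\epsilon'$ and choosing $\Phi_H$ so that $L_{j_1}$ aligns with the perturbed $K_{j_2}$ and $K_{j_3}$. The singular $3$-chain $Q$ has boundary $[u']-[u'']$ with $u''$ a flat triangle in the fiber over $\epsilon'$ bounded by $\Phi_{\delta_{K_j}(0)\to\epsilon'}(\ell_{j_1} \cup \ell_{j_2} \cup \ell_{j_3})$; pulling back by the symplectomorphism $\Phi_{\delta_{K_j}(0)\to\epsilon'}^{-1}$ identifies this fiber triangle with one in $T^4_{-S}$ bounded by the original linear Lagrangians, preserving area.

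Having established that each pseudo-holomorphic triangle contributing to the composition is relatively homologous to a flat fiber triangle, I apply \cite[Corollary 2.6]{ACLL_LG}, whose hypothesis is exactly what Lemma \ref{lem:prism} supplies. The corollary expresses the weighted count attached to $u$ as the weight of the fiber triangle $u''$ multiplied by a factor depending only on the homotopy class of the base triangle. In both geometric configurations the base triangle is contractible in $\bC^* \setminus \{-T^\epsilon\}$, so the base factor is a single scalar $C$ independent of the choice of intersection points within a given base triangle, and may be absorbed as the rescaling constant of the statement. What remains in each composition is then exactly the fiber triangle count, i.e.\ the product
\[
CF_{T^4}(\ell_{j_2},\ell_{j_3}) \otimes CF_{T^4}(\ell_{j_1},\ell_{j_2}) \to CF_{T^4}(\ell_{j_1},\ell_{j_3})
\]
given by HMS on the abelian variety, together with the identity (or shift) on the $\bC[x]$-factor carried by base intersections in the second case.

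The main obstacle is the combinatorial bookkeeping organising intersection points by base triangle. By Subsection \ref{sec:grading}, $CF(L_{j_1},K_{j_3}[-1])$ decomposes into graded pieces indexed by the base intersection points $b_0,\ldots,b_5$ of Figure \ref{fig:1cot_2U} (resp.\ $c_0,\ldots,c_4$ of Figure \ref{fig:2cot_1U}), and one must check that, for each ordered triple of input and output base intersection points whose degrees sum correctly, there is a unique shaded base triangle whose vertices are the given points. Inspection of the figures confirms this: in both configurations a single shaded triangular region with prescribed vertices exists for each admissible triple, matching the non-vanishing graded pieces of $\Ext$ on the B-side. With this bookkeeping in place, each matrix entry of the A-side composition reduces, up to the uniform rescaling by $C$, to a product computation on $T^4$, which is the claim.
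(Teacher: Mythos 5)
Your overall route---reduce each contributing triangle to a flat fiber triangle via Lemma \ref{lem:prism} and \cite[Corollary 2.6]{ACLL_LG}, then quote HMS for the abelian fiber---is the same as the paper's, but two steps are not right as written. First, the rescaling. You assert that the discrepancy between the weight of $u$ and that of the fiber triangle $u''$ is ``a single scalar $C$ independent of the choice of intersection points within a given base triangle.'' It is not: the relevant equation in the proof of \cite[Corollary 2.6]{ACLL_LG} gives $\rho(u)\rho(u'')^{-1}=e^{2\pi i f(p_1)}e^{-2\pi i f(q)}$, where $f$ is the function attached to the (exact) Lagrangian isotopy and $p_1$, $q$ are the actual intersection points lying in the fibers over the moved base points. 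This quantity varies with $p_1$ and $q$. The correct statement, and what the lemma's phrase ``after rescaling'' refers to, is a diagonal change of basis $p\mapsto e^{2\pi i f(p)}p$ on the generators of the Floer complexes whose underlying Lagrangian is moved; a single overall multiplicative constant on the composition map would not in general make the structure constants equal to the fiber triangle counts.

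Second, you never address why the product on cohomology is computed by degree-$0$ representatives. The groups $HF(L_{j_1},K_{j_3}[-1])$ and $HF(L_{j_2},K_{j_3}[-1])$ are quotients of the form $\bC^{(i-j)^2}/C\,\partial(\bC^{(i+1-j)^2})$, with a nontrivial differential running between the two base intersection points, so one must check that $M^2$ descends to these quotients and that every class is represented in the degree-$0$ summand. The paper does this by enumerating, on degree grounds, the only other triangle types that can occur (Equations \eqref{eq:quot_prod1} and \eqref{eq:quot_prod2}) and applying the Leibniz rule $\partial M^2(c,c')=M^2(\partial c,c')+M^2(c,\partial c')$, where the second term vanishes because there are no degree-$1$ generators; this is what shows the image of $\partial$ on the inputs maps into the image of $\partial$ on the output. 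Your ``bookkeeping'' paragraph counts base triangles but does not engage with the quotient by $\operatorname{im}\partial$, so as written the reduction of the cohomology-level composition to the fiber product is incomplete.
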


\begin{proof} (i) Let $CF((L_{j_1})_{b_0},(L_{j_2})_{b_0})$ denote what we called $CF_{b_0}(\ell_{j_1},\ell_{j_2})$ above, so that $(L_j)_b$ denotes the fiber Lagrangian $L_j \cap Y^0_b$. (i) As in Figure \ref{fig:1cot_2U}: 
\begin{equation}
\begin{aligned}
        CF(L_{j_1},L_{j_2}) & = CF((L_{j_1})_{b_0},(L_{j_2})_{b_0}) \oplus CF((L_{j_1})_{b_1},(L_{j_2})_{b_1})[-1]\\
    CF(L_{j_1},K_{j_3}[1])&=CF((L_{j_1})_{b_2},(K_{j_3}[1])_{b_2}) \oplus CF((L_{j_1})_{b_3},(K_{j_3}[1])_{b_3})[-1]\\
    CF(L_{j_2},K_{j_3}[1]) &= CF((L_{j_2})_{b_4},(K_{j_3}[1])_{b_4}) \oplus CF((L_{j_2})_{b_5},(K_{j_3}[1])_{b_5})[-1].
    \end{aligned}
\end{equation}
By our choice of Lagrangian gradings in Definition \ref{def:grading}, the differential for each cochain complex goes from the fiber over the intersection point on $b_j$ of degree $-1$ to that over $b_{j-1}$ of degree 0, for $j=1,3,5$ respectively. We therefore have exact sequences computing Floer cohomologies:
\begin{equation}
\begin{aligned}
    0 \to CF^{d_1}((L_{j_1})_{b_1},(L_{j_2})_{b_1})[-1] \xrightarrow[]{\partial} CF^{d_1}((L_{j_1})_{b_0},(L_{j_2})_{b_0}) & \to  HF^{d_1}(L_{j_1},L_{j_2})\to 0\\
    0 \to CF^{d_1+d_2}((L_{j_1})_{b_3},(K_{j_3}[1])_{b_3})[-1] \xrightarrow[]{\partial} CF^{d_1+d_2}((L_{j_1})_{b_2},(K_{j_3}[1])_{b_2}) & \to HF^{d_1+d_2}(L_{j_1},K_{j_3}[1])\to 0\\
    0 \to CF^{d_2}((L_{j_2})_{b_5},(K_{j_3}[1])_{b_5})[-1] \xrightarrow[]{\partial} CF^{d_2}((L_{j_2})_{b_4},(K_{j_3}[1])_{b_4}) & \to  HF^{d_2}(L_{j_2},K_{j_3}[1])\to 0
    \end{aligned}
\end{equation}
where $\partial$ counts images of bigon sections of $W_0$ over the bigon in the base between $b_{j-1}$ and $b_j$ for $j=1, 3, 5$ respectively. Analogous to \cite[Remark 4.14]{ACLL_tori}, we have three cases: $j_1+1<j_2<j_3-1$ in which case the degrees are all 0, $j_2+1<j_3<j_1-1$ in which case $d_1=2$ and $d_2=0$, or $j_3+1<j_1<j_2-1$ in which case $d_1=0$ and $d_2=2$. Then the composition 
\begin{equation}
\begin{aligned}
M^2: HF^{d_2}(L_{j_2},K_{j_3}[1]) \otimes HF^{d_1}(L_{j_1},L_{j_2}) 
\to  HF^{d_1+d_2}(L_{j_1},K_{j_3}[1])
\end{aligned}
\end{equation}
can be computed by taking representatives for the cohomology classes in 
\begin{equation}
\begin{aligned}
    M^2: CF^{d_2}((L_{j_2})_{b_4},(K_{j_3}[1])_{b_4}) \otimes CF^{d_1}((L_{j_1})_{b_0},(L_{j_2})_{b_0}) \to CF^{d_1+d_2}((L_{j_1})_{b_2},(K_{j_3}[1])_{b_2})
    \end{aligned}
\end{equation}
which recall, by Lemma \ref{lem:prism}, can be computed in a fiber with linear Lagrangians. This product is well-defined on Floer cohomology as a consequence of the Leibniz rule, \cite[Equation (1.2), $d=2$, \textsection(9j)]{seidel}; for coefficient ring $\mathbb{Z}/2$ the rule is
\begin{equation}
    \partial \circ M^2(\cdot, \cdot) = M^2(\partial(\cdot), \cdot) + M^2(\cdot, \partial(\cdot)).
\end{equation}

   Now we show that we can make the products equal to products in the fiber, by rescaling the intersection points. In the case of $HF(L_{j_2},K_{j_3}[1]) \otimes HF(L_{j_1},L_{j_2}) \to HF(L_{j_1},K_{j_3}[1])$, the Lagrangian $K_{j_3}$ is moved, which moves $b_2$ (output of $M^2$) and $b_4$ (an input of $M^2$). Select intersection points over these base points, $p_1 \in Y^0_{b_4}$ the fiber over $b_4$, and $q \in Y^0_{b_2}$. Let $\rho(u)=e^{-2\pi  \int_{\mathbb{D}} u^*\omega}$ be the weight of $u$, counted in $M^2$, which depends only on the relative homology class $[u]$ because of the Lagrangian boundary conditions. Thus $\rho(u')=\rho(u'')$ by Equation \eqref{eq:homolog_tris}. Then 
    \begin{equation}\label{eq:weight_change}
    \rho(u)\rho(u')^{-1} = \rho(u) \rho(u'')^{-1} = e^{-2\pi  f(p_1)}e^{2\pi  f(q)}
    \end{equation}
    by the last equation in the proof of \cite[Theorem 2.6]{ACLL_LG}. Here $f$ is the function $f=\int_0^1 H_t dt$ where $b_t=dH_t$ for $\psi^*\omega = b \wedge dt$, which follows because Lagrangian isotopies $\psi_t$ in an LG model are exact by \cite[Lemma 2.1]{ACLL_LG}, using that fibered Lagrangians over contractible curves are homotopy equivalent to fiber Lagrangians. 
    
    By Equation \eqref{eq:weight_change}, the weight of the original triangle $u$ and that of the fiber triangle $u''$ differ by an amount that only depends on the intersection points $p_1,q$. We then use  parametrized moduli spaces, varying choices under the Lagrangian isotopy and taking the boundary of a 1-dimensional manifold to identify moduli spaces for the original choices and the new choices under Lagrangian isotopy (denoted with $''$), similar to Remark \ref{rem:param_moduli_1}. We can therefore let
    \begin{equation}
        \cM:= \cM(Y^0, (L_{j_1},{L}_{j_2}, K_{j_3}[1]), (p_0,p_1, q); \beta, J) = \cM(Y^0, (L_{j_1}'',{L}_{j_2}'', K_{j_3}''[1]), (p_0'',p_1'', q''); \beta'', J'').
    \end{equation}
    We find that for $p_0 \in CF((L_{j_1})_{b_0},(L_{j_2})_{b_0})$, by Equation \eqref{eq:weight_change},
    \begin{equation}
    \begin{aligned}
        M^2(p_1,p_0) &= \sum_{\substack{q \in L_{j_1} \cap K_{j_3}[1]\\ \text{ind}[u]=0 \\ [u]:[u]=\beta}} \# \cM \cdot \rho(u) \cdot q =\sum_{\substack{q \in \ell_{j_1} \cap \ell_{j_3}\\ \text{ind}[u'']=0 \\ [u'']:[u'']=\beta''}} \# \cM\cdot \rho(u'') \cdot e^{-2\pi  f(p_1)}e^{2\pi  f(q)} \cdot q\\
        \implies M^2(e^{2\pi  f(p_1)} \cdot p_1,p_0)  &=\sum_{\substack{q \in \ell_{j_1} \cap \ell_{j_3}\\ \text{ind}[u'']=0 \\ [u'']:[u'']=\beta''}} \# \cM\cdot \rho(u'') \cdot (e^{2\pi  f(q)} \cdot q).
        \end{aligned}
    \end{equation}
    Thus rescaling $p_1 \mapsto e^{2\pi f(p_1)}p_1$ and $q \mapsto e^{2\pi f(q)}q$, we have
    \begin{equation}
        M^2(p_1, p_0) = \sum_{q \in \ell_{j_1} \cap \ell_{j_3}} \# C_{p_0p_1q} \cdot q
    \end{equation}
    where $C_{p_0p_1q}$ is the count of triangles between linear Lagrangians in a fiber, see \cite[Equation (2.8)]{Ca20} or the more general \cite[Equation (4.81)]{ACLL_tori}. \newline
    
    (ii) The proof for the second case is similar: we have Floer cochain complexes
    \begin{equation}
\begin{aligned}
        CF(L_{j_1},K_{j_2}[1]) & = CF((L_{j_1})_{c_2},(K_{j_2}[1])_{c_2}) \oplus CF((L_{j_1})_{c_4},(K_{j_2}[1])_{c_4})[-1]\\
    CF(L_{j_1},K_{j_3}[1])&=CF((L_{j_1})_{c_1},(K_{j_3}[1])_{c_1}) \oplus CF((L_{j_1})_{c_3},(K_{j_3}[1])_{c_3})[-1]\\
    CF(K_{j_2}[1],K_{j_3}[1]) & \cong CF((K_{j_2}[1])_{c_0},(K_{j_3}[1])_{c_0})[x]  
    \end{aligned}
\end{equation}
where the differential on the first two goes from the second factor to the first factor, as above, while it is zero in the third case as there are no bigons between linear Lagrangians on a cylinder or torus:
\begin{equation}
\begin{aligned}
    0 \to CF^{d_1}((L_{j_1})_{c_4},(K_{j_2}[1])_{c_4})[-1] \xrightarrow[]{\partial} CF^{d_1}((L_{j_1})_{c_2},(K_{j_2}[1])_{c_2}) & \to  HF^{d_1}(L_{j_1},K_{j_2}[1])\to 0\\
    0 \to CF^{d_1+d_2}((L_{j_1})_{c_3},(K_{j_3}[1])_{c_3})[-1] \xrightarrow[]{\partial} CF^{d_1+d_2}((L_{j_1})_{c_1},(K_{j_3}[1])_{c_1}) & \to HF^{d_1+d_2}(L_{j_1},K_{j_3}[1])\to 0\\
    0 \to CF^{d_2}(K_{j_2},K_{j_3}) & \xrightarrow[]{\cong}  HF^{d_2}(K_{j_2}[1],K_{j_3}[1])\to 0
    \end{aligned}
\end{equation}
Then the composition 
\begin{equation}
\begin{aligned}
M^2: HF^{d_2}(K_{j_2}[1],K_{j_3}[1]) \otimes HF^{d_1}(L_{j_1},K_{j_2}[1]) 
\to  HF^{d_1+d_2}(L_{j_1},K_{j_3}[1])
\end{aligned}
\end{equation}
can again be computed by taking representatives for  cohomology classes in the fibers over $c_1,c_2$. For the third morphism, only a triangle can be formed in the base between  $c_0,c_1,c_2$. Thus we compute  
\begin{equation}
\begin{aligned}
    M^2: CF^{d_2}((K_{j_2}[1])_{c_0},(K_{j_3}[1])_{c_0}) \otimes CF^{d_1}((L_{j_1})_{c_2},(K_{j_2}[1])_{c_2}) \to CF^{d_1+d_2}((L_{j_1})_{c_1},(K_{j_3}[1])_{c_1})
    \end{aligned}
\end{equation}
which again, by Lemma \ref{lem:prism}, can be computed in a fiber with linear Lagrangians after rescaling. 
\end{proof}

\section{Proof of main theorem}\label{sec:proof}

Now we prove the main theorem, which we recall here.

\begin{theorem}
The full subcategories $\mathcal{A}_L$ and $\mathcal{A}_K$ of the Fukaya category $H^* FS(Y^0, W_0)$ are equivalent to $D^b_\cL\Coh(H)$ and $D^b_\cL\Coh(V\times \bC)$, respectively. Furthermore, for the full subcategory $\mathcal{A}_{L,K}:= \mathcal{A}_L \cup \mathcal{A}_K$, there is an equivalence  
\begin{equation}
    \mathcal{A}_{L,K} \xrightarrow[]{\cong} D^b_{\cL} \Coh(X).
\end{equation}
In particular, the semi-orthogonality in $D^b_{\cL} Coh(X)$ is respected.
\end{theorem}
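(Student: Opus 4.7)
The strategy is to construct an explicit functor $\Psi: \cF \to D^b_\cL\Coh(X)$, check that it is fully faithful on objects and on morphisms using the four computations already in place, verify compatibility with composition via Lemma \ref{lem:compn_final}, and then observe that the semi-orthogonal decomposition on the B-side is automatically matched by the A-side computation $HF(K_j,L_i)=0$ of Lemma \ref{lem:orthoA}.

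First, I would define $\Psi$ on objects by setting
\[
\Psi(L_j[k]) := j_*\bigl(p^*i^*(\cL^j\boxtimes \cO_\bC)\otimes \cO_E(E)\bigr)[k],\qquad
\Psi(K_j[k]) := \pi^*(\cL^j\boxtimes \cO_\bC)[k].
\]
This matches the two summands of the Orlov semi-orthogonal decomposition recalled in Equation \eqref{eq:sod_Bside_no_wrap} and extended to the full subcategory $D^b_\cL \Coh(X)$ in the definition preceding the main theorem. Essential surjectivity onto $D^b_\cL\Coh(X)$ then reduces to the fact that every object of $D^b_\cL\Coh(X)$ is, by construction, a shift of one of these two types.

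Next I would establish the equivalences $\cF_1\simeq D^b_\cL\Coh(H)$ and $\cF_2\simeq D^b_\cL\Coh(V\times\bC)$ on the level of morphisms. For $\cF_1$, Lemma \ref{lem: F1_mors} reproduces the hom computation that was matched with $\Ext_H(\cL^i|_H,\cL^j|_H)$ in \cite{Ca20}, so the equivalence for $\cF_1$ follows directly by citation. For $\cF_2$, Lemma \ref{lem:morsF2} gives $HF(K_i,K_j)\cong \bC^{(j-i)^2}[x]$, which matches the B-side computation
\[
\Ext_{V\times\bC}(\cL^i\boxtimes\cO_\bC,\cL^j\boxtimes\cO_\bC)\cong H^0(V,\cL^{j-i})\otimes \bC[x],
\]
since $\cL$ is degree $1$ so $H^0(V,\cL^{j-i})$ has dimension $(j-i)^2$ (spanned by multi-theta sections, see \cite[Equation (2.14)]{ACLL_tori}) and the polynomial factor in $x$ is sections of $\cO_\bC$. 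The cross terms between $\cF_1$ and $\cF_2$ match by Lemma \ref{lem:relate_to_Ca}, which gives exactly the Ext group computed in Equation \eqref{eq:nonorthoB}, including the $[-1]$ shift from the relative dualizing sheaf, and by Lemma \ref{lem:orthoA}, which matches the semi-orthogonality Equation \eqref{eq:orthoB}.

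The remaining content is to promote these cohomology-level identifications into a genuine functor, i.e.\ to match compositions. Here the essential input is Lemma \ref{lem:compn_final}: every triangle product involved in the composition
\[
HF(\Psi^{-1}(\cE_2),\Psi^{-1}(\cE_3))\otimes HF(\Psi^{-1}(\cE_1),\Psi^{-1}(\cE_2))\to HF(\Psi^{-1}(\cE_1),\Psi^{-1}(\cE_3))
\]
reduces, after a base isotopy and rescaling of generators, to a triangle product of linear Lagrangians in the $T^4$ fiber. These fiber products are exactly the ones computed in \cite{fuk_abel} and \cite[\textsection 4.3]{ACLL_tori}, and they compute multiplication of multi-theta functions, which is precisely the Yoneda product of the corresponding Ext groups on the B-side. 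Since \cite{Ca20} already verified this match on the $\cF_1/D^b_\cL\Coh(H)$ side, the only genuinely new compositions to check are those involving at least one object of $\cF_2$; Lemma \ref{lem:compn_final} handles exactly these two mixed cases. The main obstacle, and the place where some care is required, is to verify that the rescalings appearing in Lemma \ref{lem:compn_final} are consistent across all triples of objects (so that one gets a true functor rather than a projectively-compatible family of maps); this amounts to checking that the area-correction function $f$ from \cite[Corollary 2.6]{ACLL_LG} can be absorbed into a single relabeling of each generator, independently of the triangle in which that generator appears. Once this is done, the semi-orthogonal decomposition of $D^b_\cL\Coh(X)$ from Equation \eqref{eq:sod_Bside_no_wrap} is automatically respected, because Lemma \ref{lem:orthoA} provides the vanishing in the A-side in the same direction, and the other three pieces have been matched factor-by-factor.
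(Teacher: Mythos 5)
Your proposal is correct and follows essentially the same route as the paper: the same object-level assignment $L_k\mapsto j_*(p^*\cL^{k}|_H\otimes\cO_p(-1))$, $K_j\mapsto \pi^*(\cL^{j}\boxtimes\cO_\bC)$, the same four morphism computations (Lemmas \ref{lem: F1_mors}, \ref{lem:relate_to_Ca}, \ref{lem:orthoA}, \ref{lem:morsF2} matched against Equations \eqref{eq:orthoB} and \eqref{eq:nonorthoB}), and the same reduction of compositions to fiber triangle products via Lemma \ref{lem:compn_final}. The consistency of rescalings you flag is exactly what the paper's Lemma \ref{lem:compn_final} supplies, since the correction factor $e^{2\pi i f(p)}$ depends only on the intersection point and not on the triangle containing it.
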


First, we have the following

\begin{theorem}[{\cite[\textsection 6]{Ca20}}]\label{thm:thesis}
There exists a commutative diagram as follows, where vertical arrows are fully faithful.
\begin{center}
\begin{tikzcd}[row sep=large, column sep=large]
D^b_{\mathcal{L}} \Coh(V) \arrow[r, "\iota^*"] \arrow[d, hookrightarrow, "\text{HMS on } V"'] & D^b_{\mathcal{L}} \Coh(H) \arrow[d, hookrightarrow, "\text{HMS on } H = \Sigma_2"] \\
H^0 \mathrm{Fuk}(T^4) \arrow[r, "\cup"] & H^0 \mathrm{FS}(Y, -v_0)
\end{tikzcd}
\end{center}
\end{theorem}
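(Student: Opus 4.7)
The plan is to assemble the diagram from pieces established in \cite{Ca20}, verifying commutativity separately on objects and on morphisms, and then identifying $\cup$ as the geometric ``thimble extension'' operation.

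First, I would fix the vertical arrows. The left HMS equivalence for the abelian surface $V$ is classical: $\cL^{\otimes j}$ corresponds to the slope-$j$ linear Lagrangian $\ell_j \subset V^\vee$, where $V^\vee$ is realized as a generic fiber $W_0^{-1}(b_S) \cong T^4$ of the Landau-Ginzburg model $(Y,-v_0)$. The right equivalence is the main result of \cite{Ca20}, matching $\cL^{\otimes j}|_H$ with the U-shaped Lagrangian $L_j \in H^0 \mathrm{FS}(Y,-v_0)$ of Definition \ref{def:U_shape}. With these identifications, I would define the functor $\cup$ on objects by
\[
\cup(\ell_j) := L_j \;=\; \bigcup_{t \in \bR} \Phi_{\gamma_{L_j}(0) \to \gamma_{L_j}(t)}(\ell_j),
\]
i.e.\ parallel transport of $\ell_j$ from the distinguished fiber $W_0^{-1}(b_S)$ over the U-shaped base curve $\gamma_{L_j}$ avoiding the critical value $-T^\epsilon$. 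Commutativity on objects is then immediate: both routes from $\cL^{\otimes j}$ land on $L_j$.

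Next I would define $\cup$ on morphisms and check the square commutes on hom-sets. A morphism class in $H^0\mathrm{Fuk}(V^\vee)(\ell_{j_1},\ell_{j_2})$ is represented by an intersection point of $\ell_{j_1}\cap \ell_{j_2}$ in the fiber. Since $L_{j_1}$ and $L_{j_2}$ are both built by parallel transport from $\ell_{j_1}$ and $\ell_{j_2}$ over \emph{the same} starting fiber and homotopic base paths, there is a canonical inclusion of fiber intersection points into $L_{j_1} \cap L_{j_2}$; $\cup$ on morphisms is this inclusion, extended chain-linearly and weighted by the area correction coming from the base strip. On the B-side, the restriction map $\iota^*\colon \Hom(\cL^{j_1},\cL^{j_2}) \to \Hom(\cL^{j_1}|_H,\cL^{j_2}|_H)$ is the quotient by the ideal generated by the defining multi-theta function of $H$. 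By Lemma \ref{lem: F1_mors},
\[
HF^*(L_{j_1},L_{j_2}) \;=\; \bC^{(j_1-j_2)^2}\big/ C\,\partial\bigl(\bC^{(j_1+1-j_2)^2}\bigr),
\]
and the Floer differential $\partial$ is, up to a scalar given by an open Gromov--Witten invariant $C$ from \cite{KL15}, multiplication by precisely that multi-theta function. Thus the two quotient constructions coincide term-by-term, and the square commutes on cohomology-level hom-sets.

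Finally I would verify functoriality by comparing triangle products. Under HMS on $V^\vee$ the composition in $H^0\mathrm{Fuk}(V^\vee)$ is the product of multi-theta series \cite[\textsection 4.3]{ACLL_tori}; after applying $\cup$ and reducing triangles to the fiber via the prism argument used later in Lemma \ref{lem:prism}, the product in $H^0 \mathrm{FS}(Y,-v_0)$ among U-shaped Lagrangians pulls back to the same fiber triangle count, modulo the differential. The B-side triangle identity reduces to the same equality after restriction along $\iota^*$, so the square commutes as a diagram of functors between cohomology categories.

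The main obstacle is bookkeeping: matching the chain-level Floer differential (with its open GW weight $C$ and area contributions from the U-shaped base curve) against the precise analytic multi-theta series on the B-side, and ensuring the grading conventions on $\ell_j$ vs.\ $L_j$ agree. This is carried out in detail in \cite[\textsection 5--6]{Ca20}, whose tropical/SYZ translation between theta coefficients and disc counts is exactly what reduces both vertical arrows and both horizontal arrows to the same combinatorial data.
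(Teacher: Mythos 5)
Your proposal is correct and matches the approach the paper takes: Theorem \ref{thm:thesis} is imported wholesale from \cite[\textsection 6]{Ca20}, and your reconstruction (vertical arrows from HMS for $V$ and for $H$, the functor $\cup$ as parallel-transport extension of $\ell_j$ to the U-shaped $L_j$, and the matching of $\iota^*$ with the Floer quotient $\bC^{(i-j)^2}/C\,\partial(\bC^{(i+1-j)^2})$ where $\partial$ is multiplication by the multi-theta function) is exactly the content that the present paper records in Lemma \ref{lem: F1_mors} and uses in Corollary \ref{thm:factor1}. Since the paper offers no independent proof beyond the citation, no further comparison is needed.
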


\begin{corollary}[HMS on $\mathcal{A}_L$ factor]\label{thm:factor1}
$\mathcal{A}_L \cong D^b_\mathcal{L}\Coh(H)$ under $L_k \mapsto \mathcal{L}^k|_H$. 
\end{corollary}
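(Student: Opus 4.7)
The plan is to show the equivalence by constructing the functor on objects via $L_k \mapsto \mathcal{L}^k|_H$, checking it is fully faithful on morphisms using the morphism computation already in hand, and deducing compatibility with composition by reducing to a situation already covered by \cite{Ca20}. Crucially, the subcategory $\mathcal{F}_1$ involves only U-shaped Lagrangians, which sit inside $Y^0$ in the complement of any neighborhood of the puncture (by Definition \ref{def:U_shape}, their projections are U-shapes around $-T^\epsilon$ whose ends stay away from $0$ when $r(t)$ is large). So I would expect $\mathcal{F}_1$ to be identifiable with a subcategory of $H^*\mathrm{FS}(Y,-v_0)$ to which the HMS result of \cite{Ca20} directly applies.

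\textbf{Objects and morphisms.} First, I would define the functor on objects by $L_k[m]\mapsto \mathcal{L}^k|_H[m]$. On Hom spaces, Lemma \ref{lem: F1_mors} gives
\[
HF^*(L_i,L_j)\;=\;\bC^{(i-j)^2}/C\,\partial\bigl(\bC^{(i+1-j)^2}\bigr),
\]
with differential $\partial$ proportional to multiplication by the multi-theta function $\vartheta$, which is precisely the defining equation of $H=\Sigma_2$. This matches the Koszul-type resolution of $\mathcal{L}^i|_H$ by line bundles on $V$, so the same linear algebra producing $\Ext^*_H(\mathcal{L}^i|_H,\mathcal{L}^j|_H)$ on the $B$-side reads off $HF^*(L_i,L_j)$ on the $A$-side. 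The fact that this is precisely the isomorphism supplied by the lower-right vertical arrow in Theorem \ref{thm:thesis} means the functor is fully faithful on the hom level.

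\textbf{Composition.} For the product structure on $\mathcal{F}_1$, I would invoke the same moduli-space identification used to prove Lemma \ref{lem: F1_mors}: the moduli spaces of pseudo-holomorphic triangles with boundary on three U-shaped Lagrangians in $(Y^0,W_0)$ are in bijection with the corresponding moduli spaces in $(Y,-v_0)$ by restricting/extending $J$. Concretely, any triangle bounding three U-shapes has image projecting to a compact region of the base that does not reach the puncture at $0$, by the same maximum-principle argument as in Lemma \ref{lem:max} (the U-shape ends rotate in a way that bounds the base image); so the triangle count in $Y^0$ equals that in $Y$. This delivers the needed compatibility of $\mu^2$ (and higher products at the $H^*$ level reduce to composition of classes) with the $A_\infty$-structure computed in \cite{Ca20}. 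Therefore the commutative diagram of Theorem \ref{thm:thesis} gives the desired equivalence $\mathcal{F}_1\xrightarrow{\cong}D^b_\mathcal{L}\Coh(H)$ after restriction to the right-hand vertical arrow.

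\textbf{Main obstacle.} The one subtlety I would want to verify carefully is that the wrapping/quasi-unit-inversion machinery of Section \ref{sec:mors_defn} does not alter the Hom groups on the $\mathcal{F}_1$ subcategory relative to the computation in \cite{Ca20}, which is not framed in the homotopy-colimit language. By Lemma \ref{lem:orthoA}-style reasoning (the ends of U-shapes live in a bounded angular sector of the base strictly below those of $K_j$'s), positive wrapping of a U-shape $L_i^k$ as $k\to\infty$ still keeps its projection in a compact region bounded away from $0$ and the critical value. Hence the colimit in Equation \eqref{eq:we_can_wrap} stabilizes for pairs $(L_i,L_j)$ and recovers the naive Floer complex $CF^*(L_i,L_j)$. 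This stabilization is the technical point I would check explicitly; once established, the corollary is an immediate consequence of Theorem \ref{thm:thesis}.
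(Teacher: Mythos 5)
Your proposal is correct and follows essentially the same route as the paper: the paper's proof simply invokes the right vertical arrow of Theorem \ref{thm:thesis} to get $D^b_\mathcal{L}\Coh(H)\cong\mathcal{F}_1'\subset H^0\mathrm{FS}(Y,-v_0)$ and then identifies $\mathcal{F}_1'\cong\mathcal{F}_1$ via the affine change of base coordinate $W_0=T^\epsilon(v_0-1)$, with the moduli-space comparison delegated to Lemma \ref{lem: F1_mors}. Your additional checks (that triangle counts for three U-shapes agree in $Y^0$ and $Y$, and that the wrapping colimit stabilizes on $\mathcal{F}_1$ since U-shapes stay away from the puncture) are exactly the details the paper leaves implicit, and they are sound.
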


\begin{proof}
    This is the statement that 
    \begin{equation}
\Ext(\mathcal{L}^{i}|_{H},\mathcal{L}^{j}|_{H}) \cong HF(L_{i},L_{j})
\end{equation}
for all $i,j \in \mathbb{Z}$. Recall Equation \eqref{eq:generalHFses} that $HF^d(L_i,L_j)$ is defined by the cohomology in degree $d$ of
\begin{equation}
  \begin{aligned}
         & CF^d_{b_1}({\ell}_{i+1},{\ell}_{j})[-1] \oplus CF^{d-1}_{b_0}({\ell}_{i},{\ell}_{j})\xrightarrow[]{\dd^{d-1}} CF^{d+1}_{b_1}({\ell}_{i+1},{\ell}_{j})[-1]\oplus CF^d_{b_0}(\ell_{i}, \ell_{j}) \\
         & \xrightarrow[]{\dd^d} CF^{d+2}_{b_1}({\ell}_{i+1},{\ell}_{j})[-1]\oplus CF^{d+1}_{b_0}(\ell_{i}, \ell_{j}).
         \end{aligned}
    \end{equation}

{\bf Case $j>i+1$.} This is covered in the right vertical arrow of Theorem \ref{thm:thesis}, by identifying U-shapes in $(Y,-v_0)$ to U-shapes in $(Y^0,W_0)$ under the affine transformation on the base from $-v_0 \mapsto W_0 = T^\epsilon(v_0-1)$. 

    {\bf Case $j=i+1$.}
For $d=0$:  
\begin{equation}
   \begin{aligned}
         CF^0_{b_1}({\ell}_{i+1},{\ell}_{i+1})[-1] \oplus \cancel{CF^{-1}_{b_0}({\ell}_{i},{\ell}_{i+1})}& \xrightarrow[]{\dd^{-1}} CF^{1}_{b_1}({\ell}_{i+1},{\ell}_{i+1})[-1]\oplus CF^0_{b_0}(\ell_{i}, \ell_{i+1})\xrightarrow[]{\dd^0}\\
         & \hspace{1.5in} CF^{2}_{b_1}({\ell}_{i+1},{\ell}_{i+1})[-1]\oplus \cancel{CF^{1}_{b_0}(\ell_{i}, \ell_{i+1})}
         \end{aligned}
    \end{equation}
where $\text{im}(\dd^{-1}) = CF^0_{b_0}(\ell_{i}, \ell_{i+1})$ because of the bigon between $b_1$ and $b_0$, and the differential on the $b_1$-fiber is 0 by Equation \eqref{eq:fiberdiffl_zero}. And $\dd^0=0$ by Equations \eqref{eq:fiberdiffl_zero} and \eqref{eq:no-opp-bigon}. So 
\begin{equation}\label{eq:deg0_samez}
HF^0(L_i, L_{i+1})=\ker(\dd^0)/\text{im}(\dd^{-1}) = HF^{1}_{b_1}({\ell}_{i+1},{\ell}_{i+1})\cong \mathbb{C}^{2 \choose 1}=H^1(\mathcal{O}_V)=H^0(\cL|_H)
\end{equation}
by Equation \eqref{eq:homs_Bside_forH} with $d=0$ and $l=(i+1)-i=1$. 

For $d=1$: We found above $\dd^0=0$ so
   \begin{equation}
   \begin{aligned}
         CF^1_{b_1}({\ell}_{i+1},{\ell}_{i+1})[-1] \oplus CF^{0}_{b_0}({\ell}_{i},{\ell}_{i+1})& \xrightarrow[]{\dd^{0}=0} CF^{2}_{b_1}({\ell}_{i+1},{\ell}_{i+1})[-1]\oplus \cancel{CF^1_{b_0}(\ell_{i}, \ell_{i+1})}\xrightarrow[]{\dd^1}\\
         & \hspace{1.5in} \cancel{CF^{3}_{b_1}({\ell}_{i+1},{\ell}_{i+1})[-1]}\oplus \cancel{CF^{2}_{b_0}(\ell_{i}, \ell_{i+1})}.
         \end{aligned}
    \end{equation}
Thus the cohomology is 
\begin{equation}
HF^1(L_i,L_{i+1}) = \ker(\dd^1)/\text{im}(\dd^0) = HF^2_{b_1}({\ell}_{i+1},{\ell}_{i+1})=\bC^{2 \choose 2}=H^2(\mathcal{O}_V)=H^1(\cL|_H)
\end{equation}
by Equation \eqref{eq:homs_Bside_forH} for $d=1$ and $l=1$.

{\bf Case $j=i$.}  
For $d=0$: 
\begin{equation}
   \begin{aligned}
         \cancel{CF^0_{b_1}({\ell}_{i+1},{\ell}_{i})[-1]} \oplus \cancel{CF^{-1}_{b_0}({\ell}_{i},{\ell}_{i})}& \xrightarrow[]{\dd^{-1}} \cancel{CF^{1}_{b_1}({\ell}_{i+1},{\ell}_{i})[-1]}\oplus CF^0_{b_0}(\ell_{i}, \ell_{i})\xrightarrow[]{\dd^0}\\
         & \hspace{1.5in} CF^{2}_{b_1}({\ell}_{i+1},{\ell}_{i})[-1]\oplus CF^{1}_{b_0}(\ell_{i}, \ell_{i}).
         \end{aligned}
    \end{equation}
    By Equations \eqref{eq:fiberdiffl_zero} and \eqref{eq:no-opp-bigon}, $\dd^0=0$, and 
\begin{equation}
HF^0(L_{i}, L_{i})=\ker(\dd^0)/\text{im}(\dd^{-1})=HF^0_{b_0}(\ell_{i}, \ell_{i})=\mathbb{C}^{2 \choose 0} = H^0(\cO_V) = H^0(\cO_H)
\end{equation}
by Equation \eqref{eq:homs_Bside_forH} for $d=l=0$. 

For $d=1$: We found above $\dd^0=0$ so
   \begin{equation}
   \begin{aligned}
         \cancel{CF^1_{b_1}({\ell}_{i+1},{\ell}_{i})[-1]} \oplus CF^{0}_{b_0}({\ell}_{i},{\ell}_{i})& \xrightarrow[]{\dd^{0}=0} CF^{2}_{b_1}({\ell}_{i+1},{\ell}_{i})[-1]\oplus CF^1_{b_0}(\ell_{i}, \ell_{i})\xrightarrow[]{\dd^1}\\
         & \hspace{1.5in} \cancel{CF^{3}_{b_1}({\ell}_{i+1},{\ell}_{i})[-1]}\oplus CF^{2}_{b_0}(\ell_{i}, \ell_{i}).
         \end{aligned}
    \end{equation}
We have a pseudo-holomorphic bigon $\dd^1: CF^2_{b_1}({\ell}_{i+1},{\ell}_i)[-1] \cong \bC \xrightarrow[]{\cong} CF^2_{b_0}(\ell_i, \ell_i)\cong \bC$, so $\ker(\dd^{1})=CF^{1}_{b_0}(\ell_i, \ell_i)$ by Equation \eqref{eq:fiberdiffl_zero}. So 
\begin{equation}
HF^{1}(L_{i},L_{i}) =\ker(\dd^{1})/\text{im}(\dd^{0})=HF^{1}_{b_0}(\ell_i, \ell_i)=\mathbb{C}^{2 \choose 1} = H^1(\cO_V) = H^1(\cO_H)
\end{equation}
by Equation \eqref{eq:homs_Bside_forH} with $d=1$ and $l=0$.

For $d=2$: There is something to check since the cochain group in degree 2 is nonzero.    \begin{equation}
   \begin{aligned}
         CF^2_{b_1}({\ell}_{i+1},{\ell}_{i})[-1] \oplus CF^{1}_{b_0}({\ell}_{i},{\ell}_{i})& \xrightarrow[]{\dd^{1}} \cancel{CF^{3}_{b_1}({\ell}_{i+1},{\ell}_{i})[-1]}\oplus CF^2_{b_0}(\ell_{i}, \ell_{i})\xrightarrow[]{\dd^2} 0.
         \end{aligned}
    \end{equation}
So $\dd^{1}$ is surjective because of the bigon $b_1$ to $b_0$, and $\ker(\dd^2) = CF^2_{b_0}(\ell_{i}, \ell_{i})$. Thus
\begin{equation}
HF^2(L_{i},L_{i})=\ker(\dd^{2})/\text{im}(\dd^{1})=0=H^2(\cO_H)
\end{equation}
because there is no cohomology in degree 2 on the genus 2 curve of complex dimension 1.

{\bf Case $j<i$}: In this case,  $CF(\ell_{i+1},\ell_{j})$ and $CF(\ell_{i},\ell_{j})$ are supported in degree $2$ because the slope of the input Lagrangian $\ell_{i+1}$ and $\ell_{i}$ is greater than the slope of the output Lagrangian $\ell_{j}$, as discussed in \cite[Example 4.6]{ACLL_Bfield}. This phenomenon of changing from degree 0 to degree $2$ when changing the order of the slopes realizes an example of Serre duality as stated in \cite[Lemma 4.7]{ACLL_Bfield}. Thus $HF(L_i,L_j)$ is supported in degree 2 in the fibers. On the B-side, Ext$(\cL^i|_H, \cL^j|_H)$ is supported in degree 1 by Equation \eqref{eq:homs_Bside_forH}. This matches the A-side on the total space, as follows.  We use that $K_{H} = \cL\mid_{H}$ by the adjunction formula, and then Serre duality \begin{equation}\label{eq:Serre_duality_Bside}
H^{d}(H, \cL^{j-i}\vert_{H})\cong H^{1-d}(H,\mathcal{L}^{i-j+1}|_H)    
\end{equation}
to obtain that 
    \begin{equation}
    \begin{aligned}
    \Ext^1(\mathcal{L}^{i}|_{H},\mathcal{L}^{j}|_{H}) & \cong H^{1}(H, \cL^{j-i}\vert_{H})\cong H^0(H,\mathcal{L}^{i-j+1}|_H) \cong HF^0(L_j, L_{i+1}) \cong HF^2(L_i,L_j)[-1].
    \end{aligned}
    \end{equation} 
    The third isomorphism follows from the first case above. The last isomorphism follows from applying monodromy to the Lagrangian $L_{i+1}$, which changes the grading in the base by $-1$ and sends $L_{i+1}$ to $L_i$. That is, $HF^0(L_j, L_{i+1}) \cong HF^2( L_{i+1}, L_j) \cong HF^2(L_i,L_j)[-1]$. So in all cases 
\begin{equation}
\Ext(\mathcal{L}^{i}|_{H},\mathcal{L}^{j}|_{H}) \cong HF(L_{i},L_{j}).
\end{equation}
 
\end{proof}

\begin{corollary}[HMS on $\mathcal{A}_K$ factor]\label{corF2}
$\mathcal{A}_K \cong D^b_{\cL}  \Coh(V \times \bC)$ under $K_j \mapsto \pr_V^*\cL^{\otimes j} \otimes \pr_\bC^* \cO_\bC$.
\end{corollary}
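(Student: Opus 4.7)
The plan is to construct a $\bC$-linear functor $F_2\colon \mathcal{F}_2 \to D^b_\cL\Coh(V \times \bC)$ sending $K_j[k] \mapsto (\pr_V^*\cL^{\otimes j} \otimes \pr_\bC^*\cO_\bC)[k]$ and verify it is fully faithful and essentially surjective. This parallels the proof of Corollary \ref{thm:factor1}, with the B-side functor $\iota^*$ replaced by $\pr_V^* \otimes \pr_\bC^*\cO_\bC$ as in the right diagram of Figure \ref{fig:diagrams}. Essential surjectivity is immediate from the definition of $D^b_\cL\Coh(V\times\bC)$ as the full subcategory on these objects and their shifts.

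For morphism spaces, compare the two sides via a K\"unneth-type splitting. On the A-side, Lemma \ref{lem:morsF2} gives
\[
HF(K_i,K_j) \;\cong\; HF_{T^4}(\ell_i,\ell_j) \otimes \bC[x],
\]
and on the B-side, K\"unneth gives
\[
\Ext^*_{V\times\bC}(\cL^i\boxtimes\cO_\bC,\,\cL^j\boxtimes\cO_\bC) \;\cong\; \Ext^*_V(\cL^i,\cL^j)\otimes\Ext^*_\bC(\cO_\bC,\cO_\bC).
\]
For $j\geq i$ both sides are concentrated in degree $0$: $H^0(V,\cL^{j-i})\cong\bC^{(j-i)^2}$ by principal polarization, and $H^0(\bC,\cO_\bC)=\bC[y]$. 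The fiber identification $HF_{T^4}(\ell_i,\ell_j)\cong H^0(V,\cL^{j-i})$ sending intersection points to theta basis sections is precisely HMS on the abelian surface $V$, i.e.\ the left vertical arrow of Theorem \ref{thm:thesis} as established in \cite{Ca20}. For the base identification, I would match the polynomial generator $x$ counting winding around the puncture $0$ (via Equation \eqref{eq:pos_wrap_full}) with the affine coordinate $y$ on $\bC$; this is standard HMS for partially wrapped cotangent fibers in $T^*S^1$ with a stop at $\infty$. Gradings agree because both sides sit in degree $0$ with no shift.

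To show $F_2$ respects composition, I would adapt Lemma \ref{lem:prism} and Lemma \ref{lem:compn_final} to the all-cotangent-fibered case (three $K$-type Lagrangians). The same prism construction moves the three base intersection points into a common fiber via a Lagrangian isotopy and an auxiliary Hamiltonian diffeomorphism $\Phi_H$, producing a triangle relatively homologous to a unique flat holomorphic triangle in $T^4$ bounded by $\ell_i\cup\ell_j\cup\ell_k$; after the rescaling of \cite[Corollary 2.6]{ACLL_LG}, the A-side product factors as the fiber triangle count tensored with the base concatenation $x^a\cdot x^b=x^{a+b}$. Under $F_2$ these match, respectively, Yoneda multiplication on $H^0(V,\cL^\bullet)$ (by HMS on $V$) and polynomial multiplication in $\bC[y]=H^0(\bC,\cO_\bC)$, which together give the Yoneda product in $\Ext^*_{V\times\bC}$ by K\"unneth.

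The principal obstacle is verifying that the one-sided wrapping of Equation \eqref{eq:Hamilt} produces the polynomial algebra $\bC[x]$ rather than the Laurent algebra $\bC[x^{\pm 1}]$, so that the base mirror is $\bC$ and not $\bC^*$; this is forced by the fact that the Hamiltonian wraps counterclockwise around $\infty$ only partially while wrapping fully in a single rotational sense around $0$, ruling out a negative-degree generator. A secondary subtlety is keeping track of the rescaling factors $e^{2\pi i f}$ from \cite[Corollary 2.6]{ACLL_LG} across all three morphism spaces simultaneously so that the associativity diagram commutes on the nose.
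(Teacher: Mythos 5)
Your proposal is correct and follows essentially the same route as the paper, whose proof is a one-line citation of exactly the ingredients you use: Lemma \ref{lem:morsF2}, HMS for abelian varieties, the K\"unneth formula for Ext, and $\Ext(\cO_\bC,\cO_\bC)=\bC[x]$. The extra detail you supply (essential surjectivity, the $\bC[x]$ versus $\bC[x^{\pm 1}]$ point, and adapting the prism argument to three cotangent-fibered Lagrangians) is a reasonable elaboration of steps the paper leaves implicit.
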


\begin{proof} Lemma \ref{lem:morsF2} describes the morphisms on the A-side, which is a product of the morphisms on a $T^4$ fiber and the $\mathbb{C}^*$ base of the LG model:
\begin{equation}
 HF^*(K_i, K_{j}) \cong HF^*(\ell_i,\ell_j)[x].
\end{equation}
By the K\"unneth formula for Ext, we obtain a product as well on the B-side:
\begin{equation}
 \Ext^*(\pr_V^*\cL^{\otimes i} \otimes \pr_\bC^* \cO_\bC,\pr_V^*\cL^{\otimes j} \otimes \pr_\bC^* \cO_\bC)\cong \Ext^*(\cL^{\otimes i}, \cL^{\otimes j}) \otimes \text{Ext}^*(\mathcal{O}_\bC, \mathcal{O}_\bC).
\end{equation}
By HMS for abelian varieties \cite{fuk_abel}, \cite[Theorem 4.21]{ACLL_tori},
\begin{equation}
    \Ext^*(\cL^{\otimes i}, \cL^{\otimes j}) \cong HF^*(\ell_i,\ell_j)
\end{equation}
and we know as well that 
\begin{equation}
\text{Ext}^*(\mathcal{O}_\bC, \mathcal{O}_\bC)= \mathbb{C}[x]. 
\end{equation}
Thus
\begin{equation}
    \Ext^*(\pr_V^*\cL^{\otimes i} \otimes \pr_\bC^* \cO_\bC,\pr_V^*\cL^{\otimes j} \otimes \pr_\bC^* \cO_\bC) \cong  HF^*(K_i, K_{j}).
\end{equation}
\end{proof}

\begin{proof}[Proof of Main Theorem \ref{thm}] The map 
$$
\mathcal{A}_{L,K}=\langle\mathcal{A}_L,\mathcal{A}_K\rangle \to D^b_{\cL} \Coh (X) = \langle j_*(p^* D^b_{\cL}\Coh(H  \times \{0\})\otimes \cO_p(-1)) , \pi^* D^b_{\cL}\Coh(V \times \bC) \rangle  
$$ 
defined by
\begin{equation}
\begin{aligned}
\mathcal{A}_L \ni L_{j_1} &\mapsto  j_*(p^*\cL^{\otimes {j_1}}|_H \otimes \cO_p(-1)) \in j_*(p^* D^b_{\cL}\Coh(H  \times \{0\})\otimes \cO_p(-1))\\
\mathcal{A}_K \ni K_{j_2} &\mapsto \pi^* (\pr_V^*\cL^{\otimes j_2} \otimes \pr_\bC^* \cO_\bC) \in \pi^* D^b_{\cL}\Coh(V \times \bC)
\end{aligned}
\end{equation}
is an equivalence within each factor of the semi-orthogonal decomposition by Corollaries \ref{thm:factor1} and \ref{corF2}. That is, letting 
\begin{equation}
    \mathcal{L}^{j_1,H}:=j_*(p^*\cL^{\otimes {j_1}}|_H \otimes \cO_p(-1)) , \qquad \mathcal{L}^{j_2,V \times \mathbb{C}} :=\pi^* (\pr_V^*\cL^{\otimes j_2} \otimes \pr_\bC^* \cO_\bC),
\end{equation}
then since $j:E \to X$ is a closed immersion and $\otimes \cO_p(-1)$ is an auto-equivalence,
\begin{equation}
\Ext^*(\mathcal{L}^{j_1,H},\mathcal{L}^{j_2,H})=\Ext^*(\cL^{j_1}|_H, \cL^{j_2}|_H) = HF^*(L_{j_1},L_{j_2}),
\end{equation}
\begin{equation}
\Ext^*(\mathcal{L}^{j_1,V \times \mathbb{C}},\mathcal{L}^{j_2,V \times \mathbb{C}})=HF^*(K_{j_1},K_{j_2}),
\end{equation}
and composition is respected within each factor.

For morphisms between factors, it is zero on both sides in one direction, by Equation \eqref{eq:orthoB} on the B-side and Lemma \ref{lem:orthoA} on the A-side. That is,
$$
\Ext(\mathcal{L}^{j_1,V \times \mathbb{C}}, \mathcal{L}^{j_2,H})=0 = HF(K_{j_1}, L_{j_2}).
$$
In the other direction, the equivalence on morphisms reduces to HMS for $H$, as follows. By Equations \eqref{eq:1stpart-nonorthoBside} and \eqref{eq:nonorthoB}
\begin{equation}
\Ext(\mathcal{L}^{j_2,H},\mathcal{L}^{j_1,V \times \mathbb{C}}) \cong \Ext(\cL^{j_2}|_H, \cL^{j_1}|_H)[-1].
\end{equation}
By Corollary \ref{thm:factor1} and Lemma \ref{lem:relate_to_Ca} this is isomorphic to 
\begin{equation}\label{eq:deg_shft_homs}
    HF(L_{j_2},L_{j_1})[-1] \cong HF(L_{j_2}, K_{j_1}).
\end{equation}

Lastly, composition is respected between the two components by Lemma \ref{lem:compn_final} and HMS for abelian varieties \cite[Proposition 4.18]{ACLL_tori}, \cite{fuk_abel}. That is, the following two diagrams commute:
\[\begin{tikzcd}
 HF^{d_2}(L_{j_2},K_{j_3}[1]) \otimes HF^{d_1}(L_{j_1}, L_{j_2}) \arrow[r, "M^2"]\arrow[d, "\cong "'] & HF^{d_1+d_2}(L_{j_1}, K_{j_3}[1])\arrow[d,"\cong "] \\
 \Ext^{d_2}(\cL^{j_2}|_{H},\cL^{j_3}|_{H}) \otimes \Ext^{d_1}(\cL^{j_1}|_{H},\cL^{j_2}|_{H}) \arrow[r, "\otimes"]& \Ext^{d_1+d_2}(\cL^{j_1}|_{H},\cL^{j_3}|_{H})
 \end{tikzcd}
 \]
 and 
 \[\begin{tikzcd}
 HF^{d_2}(K_{j_2}[1],K_{j_3}[1]) \otimes HF^{d_1}(L_{j_1}, K_{j_2}[1]) \arrow[r, "M^2"]\arrow[d, " \cong"'] & HF^{d_1+d_2}(L_{j_1}, K_{j_3}[1])\arrow[d,"\cong "] \\
 \Ext^{d_2}(\cL^{j_2}|_H,\cL^{j_3}|_H) \otimes \Ext^{d_1}(\cL^{j_1}|_{H},\cL^{j_2}|_{H}) \arrow[r, "\otimes"]& \Ext^{d_1+d_2}(\cL^{j_1}|_{H},\cL^{j_3}|_H)
 \end{tikzcd}.
 \]

\end{proof}

\bibliographystyle{amsalpha}
\bibliography{CV}
\end{document}